\newtheorem{theorem}{Theorem}[section]
\newtheorem{proposition}[theorem]{Proposition}
\newproof{proof}{Proof}
\newtheorem{definition}[theorem]{Definition}
\newtheorem{assumption}[theorem]{Assumption}
\newtheorem{remark}[theorem]{Remark}
\newcommand{\Omegaref}{\widehat{\Omega}}
\definecolor{dgreen}{rgb}{0,0.6,0.9}
\newcommand{\ds}{\displaystyle}
\def\N{\mathbb N}
\definecolor{ddgreen}{rgb}{0.5,0.2,1}
\definecolor{dred}{rgb}{0.92,0,0}
\definecolor{dgreen}{rgb}{0,0.6,0}
\def\Rd{\color{black}}
\def\Gd{\color{black}}
\def\B{\color{black}}
\def\Bd{\color{black}}
\newcommand{\ext}{{\rm ext}}
\newcommand{\M}{\mathcal{M}}
\newcommand{\pM}{\mathsf{M}}
\newcommand{\A}{\mathcal{A}}
\newcommand{\R}{\mathbb{R}}
\renewcommand{\N}{\mathbb{N}}
\newcommand{\AS}{\mathsf{AS}}
\def\div{\,{\rm div}\,}
\newcommand{\imag}{{\rm i}}
\newcommand{\be}{{\bf e}}
\newcommand{\bn}{{\bf n}}
\newcommand{\bu}{{\bf u}}
\newcommand{\bv}{{\bf v}}
\newcommand{\bC}{{\bf C}}
\newcommand{\bE}{{\bf E}}
\newcommand{\bG}{{\bf G}}
\newcommand{\bF}{{\bf F}}
\newcommand{\bL}{{\bf L}}
\newcommand{\bzeta}{{\boldsymbol{\zeta}}}
\newcommand{\curl}{\,{\bf curl}\,}
\def\div{\,{\rm div}\,}
\newcommand{\Horot}{\,{\bf H}_0({\rm rot};\Omega)\,}
\newcommand{\Hcurl}{\,{\bf H}({\mathbf{curl}};\Omega)\,}
\newcommand{\HcurloD}{\,{\bf H}_{0,\Gamma_D}({\mathbf{curl}};\Omega)\,}
\newcommand{\Hcurlr}{\,{\bf H}({\mathbf{curl}};{\Omegaref})\,}
\newcommand{\Hocurl}{\,{\bf H}_0(\mathbf{curl};\Omega)\,}
\newcommand{\Hocurlr}{\,{\bf H}_0(\mathbf{curl};{\Omegaref})\,}
\newcommand{\Hdiv}{\,{\bf H}({\rm div};\Omega)\,}
\newcommand{\Hdivr}{\,{\bf H}({\rm div};{\Omegaref})\,}
\newcommand{\Hodiv}{\,{\bf H}_0({\rm div};\Omega)\,}
\newcommand{\Hodivr}{\,{\bf H}_0({\rm div};{\Omegaref})\,}
\newcommand{\Hone}{\,{H}^1(\Omega)\,}
\newcommand{\Honeo}{\,{H}^1_0(\Omega)\,}
\newcommand{\Ltwo}{\,{L}^2(\Omega)\,}
\newcommand{\Ltwoo}{\,{L}^2(\Omega)\, }
\newcommand{\Honer}{\,{H}^1(\Omegaref)\,}
\newcommand{\Honeor}{\,{H}^1_0(\Omegaref)\,}
\newcommand{\Ltwor}{\,{L}^2(\Omegaref)\,}
\newcommand{\Ltwoor}{ \,{L}^2(\Omegaref)\,  }
\newcommand{\zero}{\,{\bf 0}}
\newcommand{\Honesp}{S_{p_1,p_2,p_3} (\Xi_1, \Xi_2, \Xi_3)}
\newcommand{\Hcurlsp}{S_{p_1-1,p_2,p_3} (\Xi'_1, \Xi_2, \Xi_3) \times S_{p_1,p_2-1,p_3} (\Xi_1, \Xi'_2, \Xi_3)\times S_{p_1,p_2,p_3-1} (\Xi_1, \Xi_2, \Xi'_3)}
\newcommand{\Hdivsp}{S_{p_1,p_2-1,p_3-1} (\Xi_1, \Xi'_2, \Xi'_3) \times S_{p_1-1,p_2,p_3-1} (\Xi'_1, \Xi_2, \Xi'_3)\times S_{p_1-1,p_2-1,p_3} (\Xi'_1, \Xi'_2, \Xi_3)}
\newcommand{\Ltwosp}{S_{p_1-1,p_2-1,p_3-1} (\Xi'_1, \Xi'_2, \Xi'_3) }
\newcommand{\Szero}{\hat{X}^0}
\newcommand{\Sone}{\hat{X}^1}
\newcommand{\Stwo}{\hat{X}^2}
\newcommand{\Sthree}{\hat{X}^3}
\newcommand{\Szeroh}{\hat{X}^0_h}
\newcommand{\Soneh}{\hat{X}^1_h}
\newcommand{\Stwoh}{\hat{X}^2_h}
\newcommand{\Sthreeh}{\hat{X}^3_h}
\newcommand{\Szeroo}{\hat{X}^0_0}
\newcommand{\Soneo}{\hat{X}^1_0}
\newcommand{\Stwoo}{\hat{X}^2_0}
\newcommand{\Sthreeo}{\hat{X}^3_0}
\newcommand{\Szerooh}{\hat{X}^0_{0,h}}
\newcommand{\Soneoh}{\hat{X}^1_{0,h}}
\newcommand{\Stwooh}{\hat{X}^2_{0,h}}
\newcommand{\Sthreeoh}{\hat{X}^3_{0,h}}
\newcommand{\Xzero}{X^0}
\newcommand{\Xone}{X^1}
\newcommand{\Xtwo}{X^2}
\newcommand{\Xthree}{X^3}
\newcommand{\Xzeroh}{X^0_h}
\newcommand{\Xoneh}{X^1_h}
\newcommand{\Xtwoh}{X^2_h}
\newcommand{\Xthreeh}{X^3_h}
\newcommand{\Zzeroh}{Z^0_h}
\newcommand{\Zoneh}{Z^1_h}
\newcommand{\Ztwoh}{Z^2_h}
\newcommand{\Zthreeh}{Z^3_h}
\newcommand{\Xzeroo}{X^0_0}
\newcommand{\Xoneo}{X^1_0}
\newcommand{\Xtwoo}{X^2_0}
\newcommand{\Xthreeo}{X^3_0}
\newcommand{\Xzerooh}{X^0_{0,h}}
\newcommand{\Xoneoh}{X^1_{0,h}}
\newcommand{\Xtwooh}{X^2_{0,h}}
\newcommand{\uzero}{\phi}
\newcommand{\uone}{\bu}
\newcommand{\utwo}{\bv}
\newcommand{\uthree}{\varphi}
\newcommand{\pb}{\iota}
\def\bF{{\bf F}}
\def\bC{{\bf C}}
\def\bv{{\bf v}}
\renewcommand{\sp}{{S}}
\newcommand{\grad}{\,{\bf grad}\,}
\newcommand{\rot}{\,{\rm rot}\,}
\newcommand{\brot}{\,{\bf rot}\,}
\newcommand{\Moneh}{{\mathcal M}^{1}_1}
\newcommand{\Monev}{{\mathcal M}^{1}_2}
\newcommand{\Mzero}{{\mathcal M}^{0}}
\newcommand{\Mtwo}{{\mathcal M}^{2}}
\let\hat\widehat
\begin{document}

\begin{frontmatter}
\title{Isogeometric Methods for Computational Electromagnetics: \\B-spline and T-spline discretizations}

\author[imati]{A.~Buffa}
\ead{annalisa.buffa@imati.cnr.it}
\author[unipv,imati]{G.~Sangalli}
\ead{Giancarlo.sangalli@unipv.it}
\author[imati]{R.~V\'azquez}
\ead{vazquez@imati.cnr.it}

\address[imati]{Istituto di Matematica Applicata e Tecnologie Informatiche 'E. Magenes' del CNR\\ via Ferrata 1, 27100, Pavia, Italy}
\address[unipv]{Dipartimento di Matematica, Universit\`a di Pavia, via Ferrata 1, 27100, Pavia, Italy}

\begin{abstract}
In this paper we introduce methods for electromagnetic wave
propagation, based on splines and on T-splines. We define spline
spaces which form a De Rham complex and, following the isogeometric
paradigm, we map them on domains which are (piecewise) spline or NURBS
geometries.  We analyse their geometric structure, as related to the
connectivity of the underlying mesh, \Gd and we give a  physical
interpretation of the fields  degrees-of-freedom through the concept of
control fields. \B  The theory is then extended to the case of meshes with T-junctions, leveraging on the recent theory of T-splines. The use of T-splines enhance our spline methods with local refinement capability and numerical tests show the efficiency and the accuracy of the techniques we propose. 
\end{abstract}

\begin{keyword}
  Maxwell equations, De Rham diagram, Exact Sequences,  Isogeometric Methods, Splines, T-splines.
\end{keyword}
\end{frontmatter}


\section{Introduction}
Electromagnetic field computations and, more generally, the numerical discretization of equations enjoying a relevant geometric structure, is one of the most interesting challenge of numerical analysis for PDEs  and several results have been obtained in the last decade. Indeed, only for Galerkin methods, three Acta Numerica overview papers have been  published:  by Hipmair \cite{HIP02a}, by Arnold, Falk and Winther \cite{AFW06}, and by Boffi \cite{Boff10}, addressing different aspects of the problem.

 On the one hand, discrete schemes have to preserve the geometric structure of the underlying PDEs in order to avoid spurious behaviors, instability or non-physical solutions (see e.g., the pioneering paper \cite{BFGP}). \Bd For electromagnetics, as it is clear from the references above, numerical schemes have to be related with a discrete De Rham complex. \B On  the other hand, especially in view of high frequency computations, numerical schemes need to be efficient and accurate. This requires many features, and among others it requires adaptivity, or at least local mesh refinement capability, in order to capture the strong singularities of the electromagnetic field,   possibly driven by \emph{a-posteriori} error estimator as, e.g., in \cite{BrSc08}. 

In this paper we present and analyse discretization techniques for electromagnetic fields based on splines and  generalizations of splines, as NURBS (\cite{Piegl}) or T-splines (\cite{Sederberg_Zheng} or below). Our work originates from IsoGeometric Analysis (IGA),  \cite{IGA-book}. Isogeometric analysis has been introduced in 2005 by Hughes and co-authors in the seminal paper \cite{HCB05} to solve structural mechanic problems directly on the geometry output by a CAD system, and has set the paradigm to use splines, NURBS or their generalization as generating functions for the construction of Galerkin spaces. This idea has been proved to be extremely effective and IGA is spreading very fast across different scientific communities: structural mechanics (see e.g., \cite{IGA-book}, \cite{MR2443159}, \cite{Cottrell_Reali_Bazilevs_Hughes}, \cite{Echter2010374}, \cite{Lipton_Evans_Bazilevs}, \cite{rank12}, \cite{MR2835758}), geometric modeling (see e.g., \cite{DQS10}, \cite{Martin_Cohen_Kirby}, \cite{CoMa10} and also \cite{Zhang2012}) and numerical analysis (see e.g., \cite{BBCHS06}, \cite{Beirao_Cho_Sangalli} , \cite{BRSV11}, \cite{nwidth-iga}, \cite{Vuong_giannelli_juttler_simeon}, \cite{BePa12}, \cite{kleissieti}). 

 In this paper we present the recent advances in the use of the isogeometric paradigm and  spline-based  methods for  electromagnetic wave computations. This research has started with the two papers \cite{Buffa_Sangalli_Vazquez} and \cite{BRSV11} and can likely be considered as still in infancy (see also \cite{ratnani} for the applications of this results). This paper aims at showing the potential impact  of these techniques in the electromagnetic community by addressing several aspects: from the geometric structure of the proposed methods, to local refinement strategies. 

  We introduce the \textbf{spline complex} studied in \cite{BRSV11} (see \eqref{eq:3D-parametric-diagram-bc} and \eqref{eq:diag-discrete}) and we present its properties: we construct canonical bases so that the matrices representing differential operators are the incidence matrices of the underlying meshes, and this enlightens the relation between the spline complex and the geometry of the underlying meshes. We show that for different choices of the  degree of splines, the spline complex is isomorphic to the co-chain complex or to the chain complex of the underlying mesh.
Besides this interesting fact, we also introduce the concept of \emph{control fields} in analogy to control points which are ubiquitous in spline theory (see e.g., \cite{cohen2001geometric} or \cite{DeBoor}) which provide the correct physical interpretation of degrees of freedom. 
Finally, we extend the results of \cite{BRSV11} to \textbf{multi-patch} geometries, i.e., geometries which are piecewise spline or NURBS mapping of the unit cube. We refer the reader to \cite{kleissieti} for a detailed description of this class of geometries. 

The second major contribution in this paper is a step towards
\textbf{adaptivity} for spline-based methods. Leveraging on the recent
work on T-splines, we design a two dimensional \textbf{T-spline
  complex} where meshes with T-junctions can be used to allow for
adaptivity.  T-splines are  the most attractive way to break the
tensor product structure of splines while keeping their structure and
their accuracy. T-splines have been introduced in
\cite{Sederberg_Zheng} and \cite{Sederberg_Cardon} and their use as a fundamental tool to enhance isogeometric analysis with adaptivity has been proposed in \cite{Bazilervs_Calo_Cottrell_Evans}.  A series of papers has followed \cite{Buffa_Cho_Sangalli}, \cite{Scott_Li_Sederberg_Hughes}, \cite{Wang2011477}, \cite{Beirao_Buffa_Cho_Sangalli},  together with the relevant class of Analysis Suitable T-splines \cite{LZSHS12}, \cite{BBCS12} which we use in our construction. The two dimensional T-spline complex is used to treat three dimensional problems with symmetry. We should also mention that  the definition and use of T-splines in three dimension are not yet well understood, but object of an intensive study. Their use will allow, on a longer time perspective, to design full adaptive algorithms, on very general geometries parametrized on  totally unstructured meshes. We refer the reader to \cite{Scott} for a monograph on the modern use of T-splines in geometric modeling. 

\Bd Finally, we should remark that the spline spaces we study in this
paper have a wide domain of applications and can be applied
successfully to the discretization of  other problems than
electromagnetics. In fact, they can be used to solve the Darcy flows
equations or more generally the Hodge laplacian operator as detailed
in \cite{AFW06} and  \cite{AFW-2}. Moreover, thanks to the regularity
of spline spaces, their use in fluids is very promising. In the paper
\cite{Buffa_deFalco_Sangalli} they are used for the first time to
solve the Stokes equations, \Gd in  \cite{EvHu12-4} the
Stokes eigenvalue problem  is addressed, and \B in the sequence of three papers \cite{EvHu12}, \cite{EvHu12-2}, \cite{EvHu12-3} they are applied to solve Stokes and Brinkman equations, steady and unsteady Navier-Stokes equations, providing impressive results. 
\B

The outline of the paper is the following. In Section \ref{sec:notation} we set up the notation for the problems we address, in Section \ref{sec:prel-splin-nurbs} we present known results about splines and NURBS in a self-contained way; in Section \ref{sec:spline-complex} we present the spline complex and all the related results while in Section \ref{sec:TT} we introduce the T-spline complex and analyse its properties. Finally, in Section \ref{sec:num} we present numerical results: the first ones are two and three dimensional, academic tests aiming at demonstrating the validity of the proposed approach. As a last example, we compute the propagation in  a waveguide with geometric inhomogeneity, on a three dimensional locally refined mesh. 






\section{Notation}
\label{sec:notation}

In this section we present the notation that we need to describe
the time-harmonic Maxwell problem.
Let $\Omega \subset \R^3$ be a bounded Lipschitz domain. We denote by
$L^2(\Omega)$ the space of complex square integrable functions on
$\Omega$, endowed with standard $L^2$ norm $\| \cdot
\|_{L^2(\Omega)}$, and by $\bL^2(\Omega)$ their vectorial
counterparts. The Hilbert space $H^1(\Omega)$ contains functions
of $L^2(\Omega)$ such that their first order derivatives also belong
to $L^2(\Omega)$. We denote by $H^1_0(\Omega) \subset H^1(\Omega)$ the subspace of functions with homogeneous boundary condition.
 We will also make use of the space $\Hcurl$, constituted by all
 functions in $\bL^2(\Omega)$ such that their curl also belongs to
 $\bL^2(\Omega)$, and $\Hdiv$, the space of functions in
 $\bL^2(\Omega)$ such that their divergence belongs to $L^2(\Omega)$.
 Moreover, we denote by $\Hocurl$
 (resp. $\Hodiv$) the subspace of $\Hcurl$
 (resp. $\Hdiv$) of fields with vanishing tangential
 (resp. normal) component.

For the sake of simplicity, we assume that the domain $\Omega$, referred to as physical domain in
the following,  is bounded Lipschitz and simply connected, and that
its boundary $\partial \Omega$ is connected. We also assume that it is
defined through a \Bd continuously differentiable \B  parametrization with \Bd continuously differentiable \B  inverse which we
denote as  $\bF: \Omegaref \longrightarrow \Omega$, where $\Omegaref$
will be referred to as the \emph{parametric domain}. Further assumptions on the
geometrical mapping $\bF$ will be given later. 

Some notation will be borrowed from the context of differential forms:
first of all, we define the spaces
 \begin{eqnarray*}
\Szero := \Honer, \; \Sone := \Hcurlr, \; \Stwo := \Hdivr, \; \Sthree := \Ltwor ,\\
\Xzero := \Hone, \; \Xone := \Hcurl, \; \Xtwo := \Hdiv, \; \Xthree := \Ltwo;
\end{eqnarray*}
Since the parametrization $\bF$ and its inverse are smooth, we can
define the pullbacks that relate these spaces as (see
\cite[Sect.~2.2]{HIP02a}):
\begin{equation}
\begin{array}{ll}
 \pb^0(\uzero) := \uzero \circ \bF , &\quad \uzero \in \Xzero , \\
 \pb^1(\uone) := (D\bF)^T(\uone \circ \bF) , &\quad \uone \in \Xone , \\
 \pb^2(\utwo) := \det(D\bF) (D\bF)^{-1} (\utwo \circ \bF) , & \quad \utwo \in \Xtwo , \\
 \pb^3(\uthree) := \det(D\bF) (\uthree \circ \bF) , & \quad \uthree \in \Xthree , \label{def:iota3}
\end{array}
\end{equation}
where $D\bF$ is the Jacobian matrix of the mapping $\bF$. Then, due to the curl and divergence conserving properties of $\pb^1$ and $\pb^2$, respectively (see \cite[Sect.~3.9]{Monk}, for instance), the following commuting De~Rham diagram is satisfied (see \cite[Sect.~2.2]{HIP02a}):
\begin{equation} \label{eq:diag1}
\begin{CD}
\mathbb{R} @>>> \Szero @>\hat \grad>> \Sone @>\hat \curl>> \Stwo @>\hat \div>> \Sthree @>>> 0 \\
@. @A\pb^0AA @A\pb^1AA @A\pb^2AA @A\pb^3AA \\
\mathbb{R} @>>> \Xzero @>\grad>> \Xone @>\curl>> \Xtwo @>\div>> \Xthree @>>> 0 . \\
\end{CD}
\end{equation}

We are also interested in  spaces with boundary
conditions,  denoted with the subindex $0$,
\begin{eqnarray*}
\Szeroo := \Honeor, \; \Soneo := \Hocurlr, \; \Stwoo := \Hodivr, \;
\Sthreeo :=  \Ltwoor, \\
\Xzeroo := \Honeo, \; \Xoneo := \Hocurl, \; \Xtwoo := \Hodiv, \;
\Xthreeo := \Ltwoo,
\end{eqnarray*}
for which  the De~Rham diagram reads
\begin{equation} \label{eq:diag2}
\begin{CD}
0 @>>> \Szeroo @>\hat \grad>> \Soneo @>\hat \curl>> \Stwoo @>\hat
\div>> \Sthreeo @>\int >> \mathbb{R}  \\
@. @A\pb^0AA @A\pb^1AA @A\pb^2AA @A\pb^3AA \\
0 @>>> \Xzeroo @>\grad>> \Xoneo @>\curl>> \Xtwoo @>\div>> \Xthreeo @> \int>> \mathbb{R} , \\
\end{CD}
\end{equation}
which also expresses the integral preserving property of
$\pb^3$. 

\Bd 
\begin{remark}
As it is well known, the exactness of the sequences \eqref{eq:diag1} and \eqref{eq:diag2} relies on the assumption that $\Omega$ (and $\hat\Omega$) has a  trivial topology. All what we develop in this paper applies in principle also to the case of arbitrary topology but we do not present all the details here. 
\end{remark}
\B 

\section{Preliminaries on splines and NURBS}
\label{sec:prel-splin-nurbs}

We give here a brief overview on B-splines and, in the spirit of \cite{Bazilervs_Calo_Cottrell_Evans}, we also introduce some concepts that will be needed in the definition of T-splines. For more details on B-splines we  refer \B  the reader to \cite{HCB05,Piegl}.

\subsection{Univariate B-splines}
\subsubsection{Knot vector and B-spline functions, refinement, spline derivatives} \label{sect:knot_vector}
Given two positive integers $p$ and $n$, we say that $\Xi := \{\xi_1,\dots, \xi_{n+p+1}\}$ is a $p$-open knot vector if
$$
\xi_1 =\ldots=\xi_{p+1} < \xi_{p+2} \le \ldots \le
\xi_{n} < \xi_{n+1}=\ldots=\xi_{n+p+1},
$$
where repeated knots are allowed and denote by  $m_j$  the multiplicity of the knot
 $\xi_j$. We assume  $ m_j \leq p+1$ for all internal knots.\B

From the knot vector $\Xi$, B-spline functions of degree $p$ are defined following the well-known Cox-DeBoor recursive formula: we start with piecewise constants ($p=0$):
\begin{equation}\label{eq:cox-deboor-1}
N_{i,0}(\zeta) = \left \{
\begin{array}{ll}
1 & \text{if } \xi_i \leq \zeta < \xi_{i+1}, \\
0 & \text{otherwise},
\end{array}
\right.
\end{equation}
and for $p \ge 1$ the \emph{B-spline}  functions are defined by the recursion
\begin{equation}\label{eq:cox-deboor-2}
N_{i,p}(\zeta) = \frac{\zeta - \xi_i}{\xi_{i+p} - \xi_i} N_{i,p-1}(\zeta) + \frac{\xi_{i+p+1} - \zeta}{\xi_{i+p+1} - \xi_{i+1}} N_{i+1,p-1}(\zeta).
\end{equation}
 This gives a set of $n$ B-splines that form a basis of the space of
 \emph{splines}, that is, piecewise polynomials of degree $p$ with
 $p-m_j$ continuous derivatives at the internal knots $\xi_j$, for $j
 = p+2, \ldots, n$.  We denote this univariate spline space by
 \begin{equation}
   \label{eq:spline-univariate-space}
   \sp_p(\Xi) = \text{span} \{ N_{i,p}, \, i=1,\ldots, n \}
 \end{equation}
  An example of some B-splines is given in Figure~\ref{fig:1}.
\begin{figure}[pt!]
\centering
\includegraphics[width=.5\textwidth]{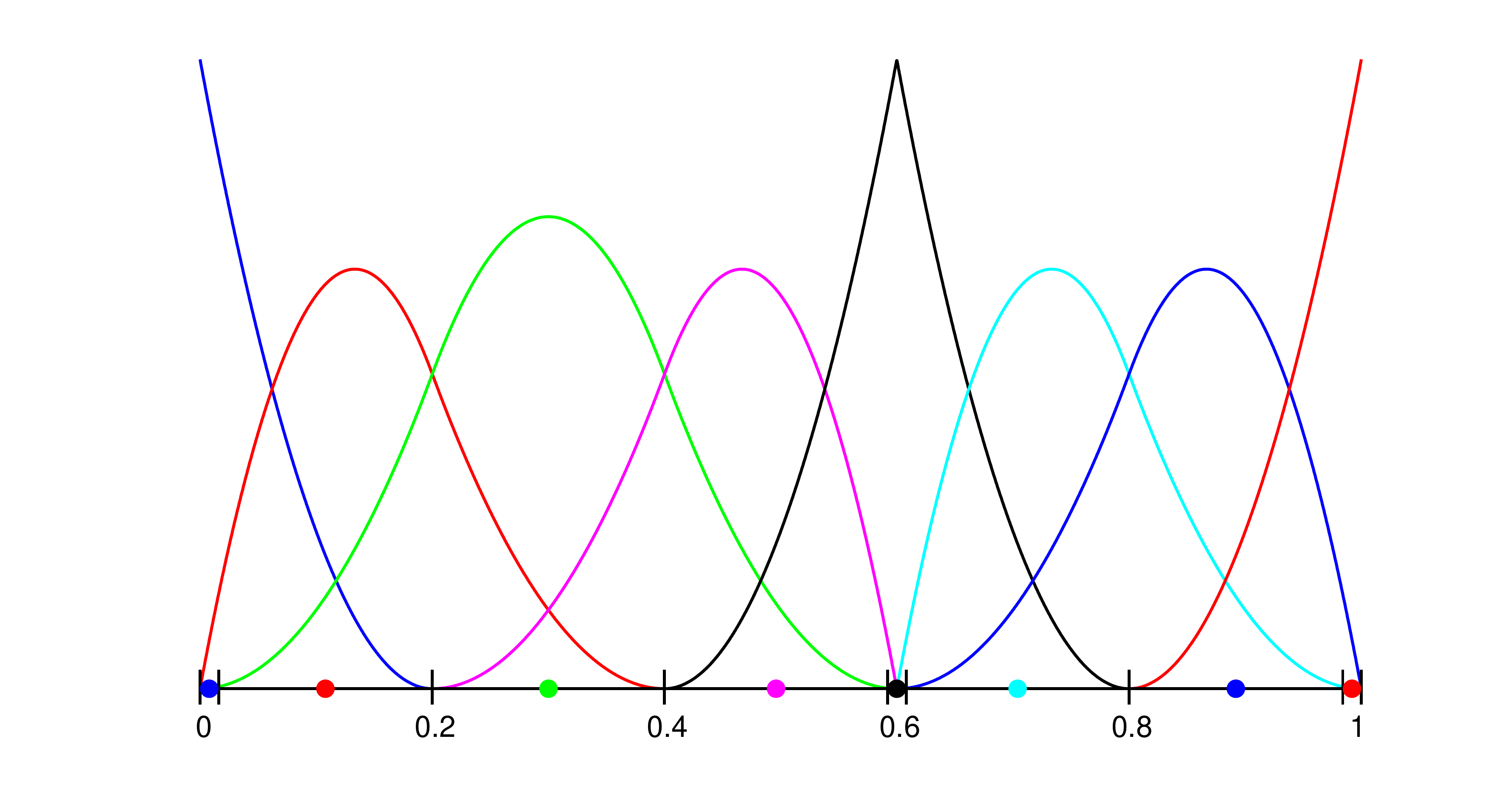} \hspace{-1cm}\includegraphics[width=.5\textwidth]{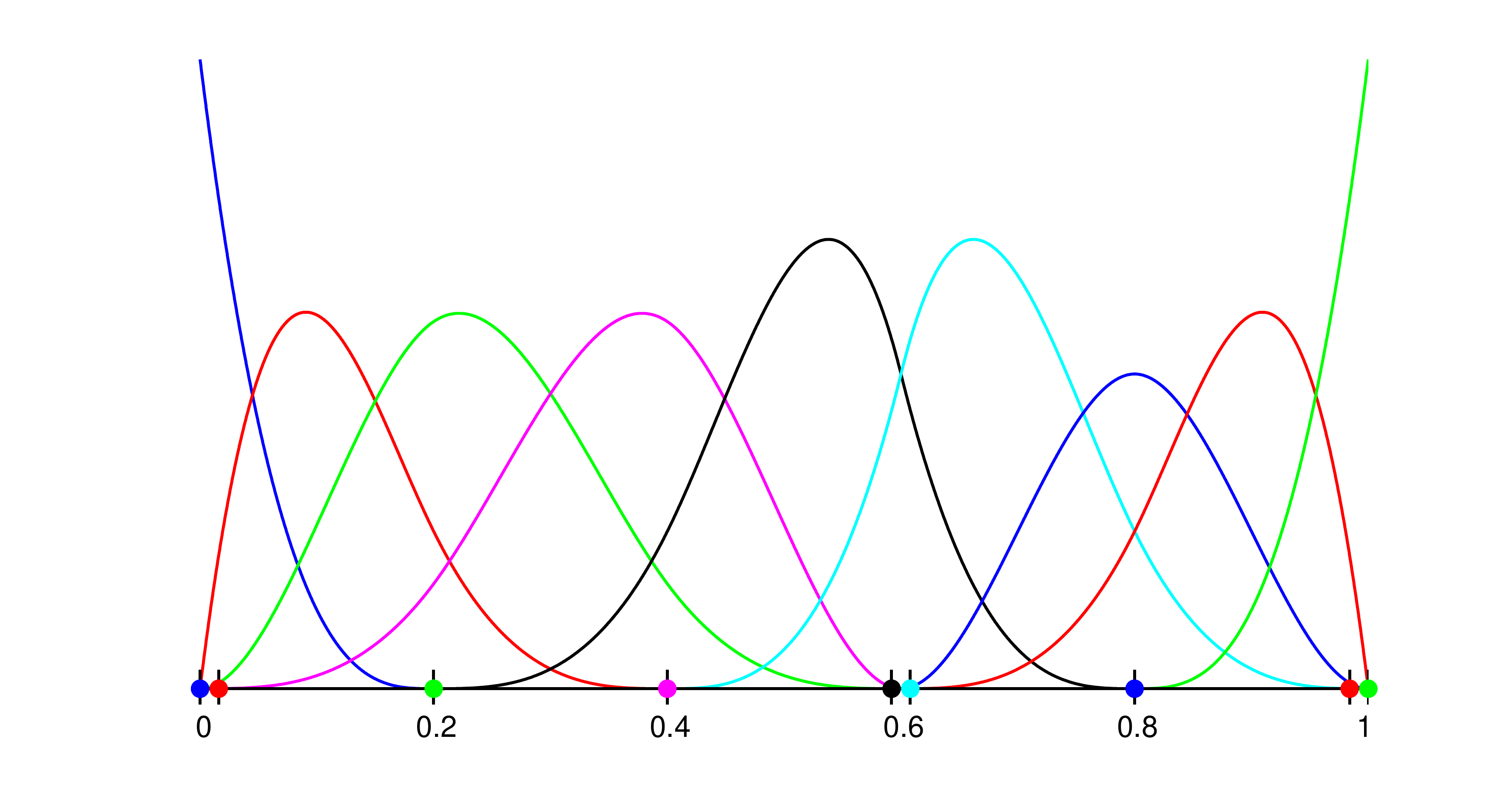}
\caption{Example of B-splines of degree $2$ (left) and $3$ (right).} \label{fig:1}
\end{figure}
 Notice that the B-spline function $N_{i,p}$ is supported in the
 interval $[\xi_i, \xi_{i+p+1}]$, and in fact its definition only
 depends on the knots within that interval. For this reason, we define
 the \emph{local knot vector} $\Xi_{i,p} = \{\xi_i, \xi_{i+1}, \ldots,
 \xi_{i+p+1}\}$, and we will sometimes denote $N_{i,p}(\zeta) \equiv
 N[\Xi_{i,p}](\zeta)$.

In the context of splines, three kinds of refinement are possible, as explained in \cite{HCB05}:
\begin{enumerate}
\item  \emph{$k$-refinement} which corresponds to successive application of the Cox-DeBoor formula
  \eqref{eq:cox-deboor-1}--\eqref{eq:cox-deboor-2}. Regularity is
  raised together with the degree:  therefore, the spaces
  \eqref{eq:spline-univariate-space} are not nested under
  {$k$-refinement} but, at each step (degree and regularity elevation),
  the dimension of the space increases by $1$.  The name {$k$-refinement} has been introduced in \cite{HCB05};
\item \emph{$h$-refinement} which corresponds to mesh refinement and is obtained by knot insertion. Let $\bar\Xi := \{\xi_1,\dots,\xi_{k}, \bar \xi, \xi_{k+1}, \xi_{n+p+1}\}$ be the knot vector after inserting the knot $\bar \xi$ in $\Xi$. Then, the new B-spline functions $\{ \bar N_{1,p} (\zeta),\ldots, \bar N_{n+p+2,p} (\zeta)\} $  are constructed as follows:
  \begin{equation}
    \label{eq:href}
  \bar N_{i,p} (\zeta)  = \alpha_i N_{i,p} (\zeta) + (1-\alpha_i) N_{i-1,p} (\zeta) 
  \end{equation}
  where  $\alpha_i =1  $, if  $1\leq i\leq k-p$, $\alpha_i =
  \frac{\displaystyle \strut  \bar \xi - \xi_i }{\displaystyle \strut
    \xi_{i+p} - \xi_i}$, if $k-p+1\leq i\leq k$ and, $\alpha_i =0$ for
  $k+1\leq i \leq n+p+2$.  When $\bar \xi$ is  equal to $\xi_k$ or
  $\xi_{k+1}$ or to both, the  knot insertion  corresponds to reduction of the inter-element regularity at $\bar \xi $. \B
\item \emph{$p$-refinement} which corresponds to the degree raising with fixed interelement regularity, and generates a sequence of nested spaces. 
\end{enumerate}

Assuming the maximum multiplicity of the internal knots is less than
or equal to $p$,  i.e., the B-spline functions are at least continuous, the derivative of the B-spline $N_{i,p}$ is a spline
as well. Indeed, it belongs to the spline  space  $S_{p-1}(\Xi')$,
where $\Xi' = \{ \xi_2, \ldots, \xi_{n+p} \}$ is a ($p-1$)-open knot
vector. Obviously, the regularity of splines in $\sp_{p-1}(\Xi')$ is one less than the
regularity in $\sp_p(\Xi)$.

 In the following we assume that $\xi_1 = 0$ and $\xi_{n+p+1} = 1$. The domain  $(0,1)$ of definition of the spline functions is the one-dimensional  \emph{parametric} domain. On it, the knot vector
$\Xi$ induces a partition  of the interval $(0,1)$ that we denote by
$\M$.  Precisely, we define  $\M$ as the set of the knot spans
$(\xi_i,\xi_{i+1})$, $i=\lceil p/2\rceil +1 ,...,n + \lfloor
p/2\rfloor$, that can also be empty due to knot multiplicity greater
than $1$. Empty intervals still play a role in the definition of
B-splines and are graphically represented as points close one to the
other, as proposed in \cite{Buffa_Cho_Sangalli}.  
Note that in this representation of $\M$, the number of
lines is the knot multiplicity with one exception: for each
boundary knot (at 0 or 1) of an open knot vector in $\M$ we represent
only a  multiplicity  of  $\lfloor p/2\rfloor + 1$, which is  $(p+1)/2$
lines if $p$ is odd, and $p/2+1$ lines if $p$ is even (see Figure~\ref{fig:1}). \B  The reason for
this construction of $\M$ will be motivated in the next
section. 

 Finally, it is worth  noting the relationship between the space
$\sp_p(\Xi)$ and the space of derivatives $\sp_{p-1}(\Xi')$, and their
respective meshes $\M$ and $\M'$. The meshes $\M$ and $\M'$ may differ only as regards the number of points at the boundary. 
Indeed, according to the definition above,  if $p$ is odd both meshes
coincide, and if $p$ is even the number of elements of $\M'$ with
respect to $\M$ is reduced by two, one on each side. 
\B

\subsubsection{Anchors and Greville sites}\label{sect:anchors}

In this section we present the concept of anchors and of Greville sites as points 
in the parametric space $(0,1)$  which may be associated to each B-spline function. 
Greville sites,  which are also known as knot averages, \B are classical and can be found for instance in \cite{DeBoor},
while the concept of anchors has been introduced recently in
\cite{Bazilervs_Calo_Cottrell_Evans}. %

Since splines are not interpolatory, the association of functions to
points (or, as we will see, other geometric entities)  is somehow more
arbitrary than with Lagrangian  finite elements. Anchors and Greville
sites are  two different choices, and we present here both.  Anchors
are very useful
when dealing with non-tensor product extensions of splines as
T-meshes, while Greville sites (and related geometric entities) carry

degrees of freedom in a more natural way.


  Given a B-spline function  $N_{i,p}(\zeta)$,  and its local knot vector  $\Xi_{i,p} =\{  \xi_i, \xi_{i+1}, \ldots, \xi_{i+p+1}\}$, we set: if $p$ is odd, the anchor $A$   associated with $N_{i,p}(\zeta)$ is  the central knot of $\Xi_{i,p} $. If $p$ is even, the anchor $A$   associated with $N_{i,p}(\zeta)$  is chosen to be the midpoint of the central knot span of $\Xi_{i,p} $, namely: $\zeta^A := \frac{\displaystyle \strut \xi_{i+p/2} + \xi_{i + p/2+1}}{\displaystyle \strut 2}$. The position of the anchors for degrees $p=2$ and $p=3$ are represented in Figure~\ref{fig:1}.

Note that obviously the correspondence between anchors and B-splines functions is one to one, but different anchors $A\neq \bar{A}$  may lie at the same position $\zeta^A=\zeta^{\bar{A}}$. A remedy to this abuse of notation, 
at the cost of more complex definition,  is proposed  in \cite{BBSV12} 
where  the use of both an \emph{index} and a \emph{parametric} domain is proposed. 

The set of anchors is denoted as $\A_p=\A_p(\Xi)$. When $p$ is odd
anchors are located at all knots \B   of the partition $\M$ (which
may be repeated), while when $p$ is even  anchors are located at midpoints
 of all elements in $\M$ (including the ones of zero
area). Indeed, this fact is the reason for the definition of $\M$ in
particular as regards to the multiplicity of boundary knots.\B

Most often, we will use anchors to index functions and local knot vectors. Namely, for an anchor $A\in \A_p$,  $\Xi^A_p$ and $B^A_p(\zeta)$ will denote the corresponding local knot vector and B-spline function, respectively.  When no confusion occurs, the subindex may be removed.

\begin{remark}\label{rem:interpolatory-B-splines}
The B-splines are, in general,  not interpolatory at the anchor $A \in \A_p(\Xi)$, while they are interpolatory at knots having multiplicity $p$. 
This always happens at $\zeta=0$ and $\zeta=1$, and happens in the interior of
the parametric domain where the basis is $C^0$ continuous, i.e., at knots with multiplicity $p$.  See e.g., Figure~\ref{fig:1}(left).
\end{remark}

Given $A\in \A_p$,  and $\Xi^A_p =\{  \xi_i,.., \xi_{i+p+1} \}$ for some $i$, then the \emph{Greville site} is defined as: 
\begin{equation}
  \label{eq:greville1D}
  \gamma^A = \frac{\xi_{i+1} + \ldots + \xi_{i+p}}{p}.
\end{equation}
Unlike anchor positions, Greville sites are all different one from the
other, when the multiplicities $m_j$ verify $m_j\leq p$ and thus
B-splines are all continuous. The Greville sites induce a partition
of the unit interval, referred as Greville mesh and denoted $\M_G$.
These concepts are ubiquitous in spline theory and geometry representation. Greville sites are intimately related to
 \emph{control points} and \emph{control polygon} whose
properties we briefly recall in the next section.

\subsubsection{B-spline curves}
A B-spline curve $\Gamma$  in $\R^3$ is defined by a parametrization in the interval $(0,1)$, in the form
\begin{equation}\label{eq:Bspline-curve}
\bF(\zeta) = \sum_{A \in \A_p} \bC^A B^A_p(\zeta), \qquad 0 < \xi < 1,
\end{equation}
where $\bC^A \in \R^3$ are called the \emph{control points}. Control
points are in a one-to-one correspondence with B-spline basis
functions. The piecewise linear interpolation of the control points
gives the control polygon $\Gamma_C$. See Figure~\ref{fig:ctrl_polygon} for an example.

The control points $\bC^A $ have an important role not only in the
definition of the spline parametrization \eqref{eq:F-Bspline}, but also in
the visualization and interaction with spline geometries within CAD
softwares.
Indeed, it is common in CAD softwares to represent, together with the
parametrized curve  $\Gamma$, the control points $\bC^A$ and the
associated control polygon $\Gamma_C$. Typically,
the CAD user defines or interacts with  the control points in order to
input and modify the geometry.  
Since the B-splines are not in general interpolatory (recall Remark
\ref{rem:interpolatory-B-splines}), then the control polygon
$\Gamma_C$  differs from
$\Gamma$, but it is ``close'' to it.   Precisely,  $\Gamma_C$
converges to $\Gamma$  under $h$-refinement. 
This convergence is proved, e.g., in \cite{DeBoor}  and
discussed here below. 

We introduce the usual Lagrangian basis for piecewise linear polynomials on the Greville mesh $\M_G$, denoted by $\lambda^A(\cdot)$:
\begin{equation}\label{eq:lambda-1D}
\lambda^A(\gamma^{A'})=\left \{ 
  \begin{aligned}
    1&\text{ if } A=A',\\
    0&\text{ if } A\neq A'.\\
  \end{aligned}
\right .
\end{equation}
The control polygon $\Gamma_C$  is then parametrized by  the mapping $\bF_C:[0,1] \rightarrow
\Gamma_C $ defined by
\begin{equation}\label{eq:Fh-Bspline-1D}
 \bF_C(\zeta) = \sum_{A \in \A_{p}} \bC^A
\lambda^A(\zeta), \qquad 0 < \xi < 1,
\end{equation}
\B
that is, $\bF_C$ and  $\bF$ share the same
control points.  When $\bF$ is smooth enough, the following approximation estimate holds (see, e.g.,
\cite[Ch. XI]{DeBoor}): 
\begin{equation}
  \label{eq:-control-polygon-convergence}
  \underset{\zeta \in [0,1] }{\sup} \| \bF(\zeta)  -   \bF_C(\zeta) \| \simeq h^2,
\end{equation}
$h$ denoting the mesh-size. In other words, $\Gamma_C$ approximates
$\Gamma$ up to an error $O(h^2)$ under $h$-refinement. A graphical representation of this convergence can be seen in Figure~\ref{fig:ctrl_polygon}. 

\begin{figure}[pt!]
\centering
\includegraphics[width=.48\textwidth]{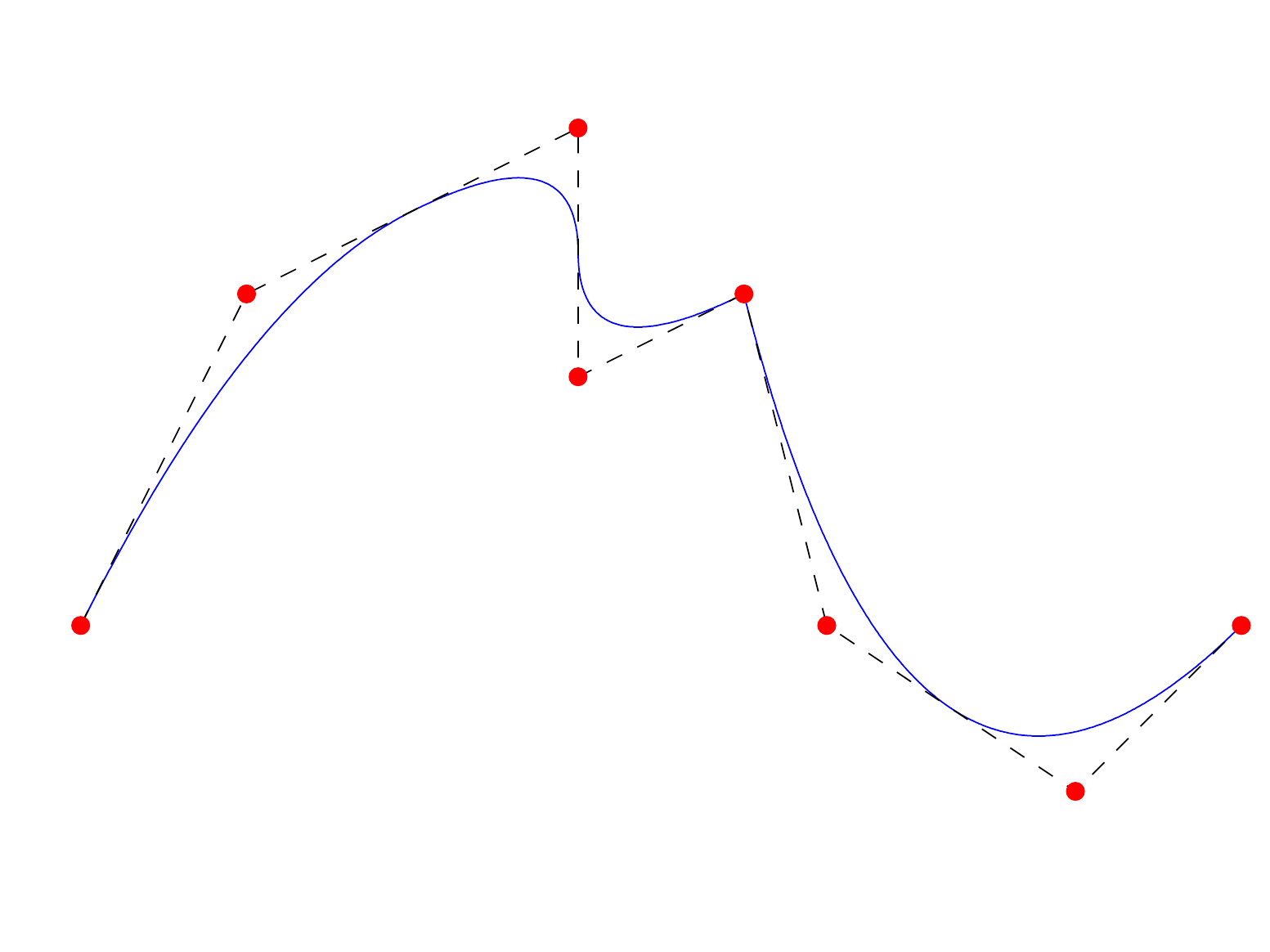} \includegraphics[width=.48\textwidth]{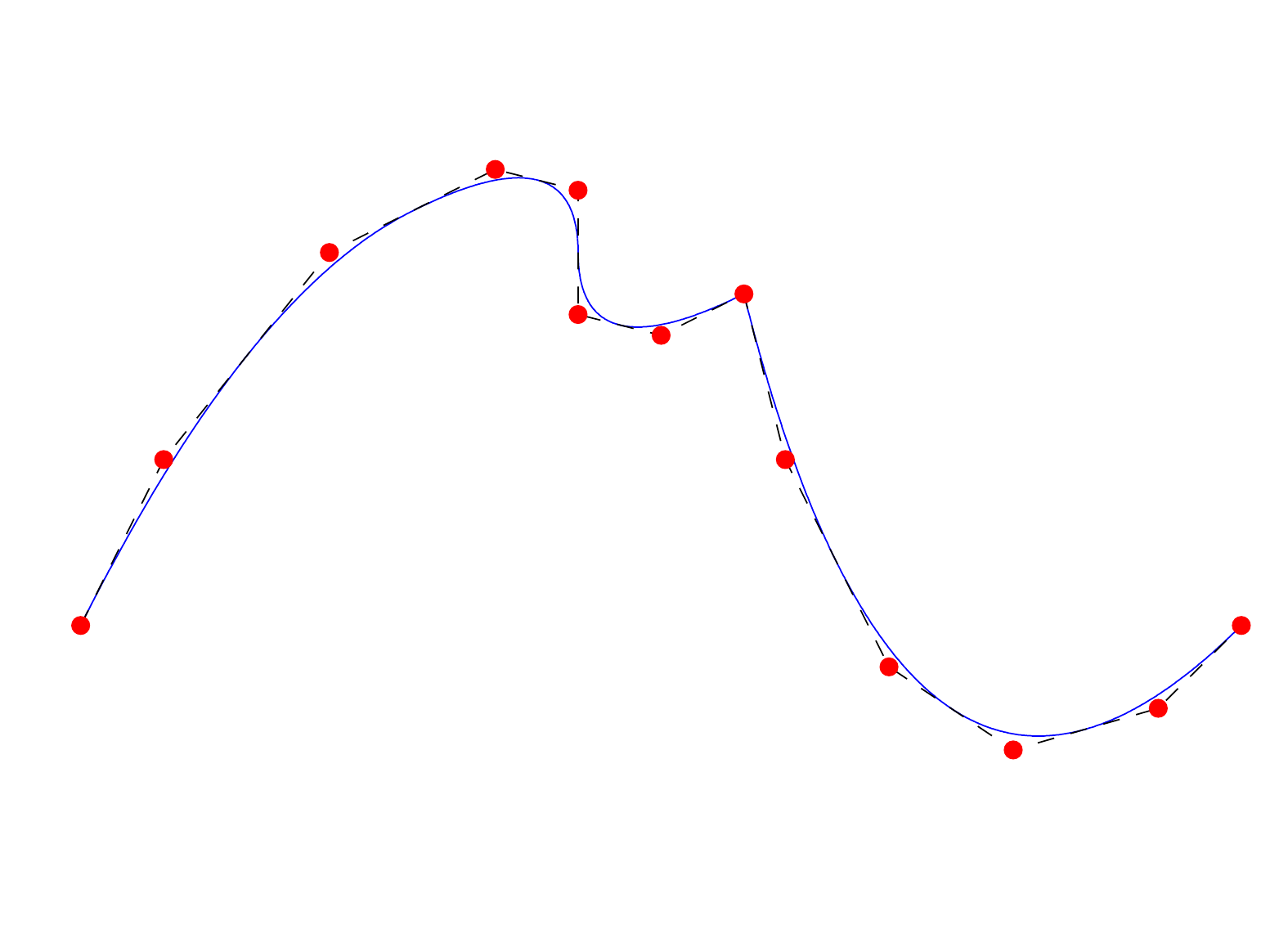} \vspace{-1cm}
\caption{B-spline curve and its control polygon (left), and the same curve after one step of $h$-refinement (right).} \label{fig:ctrl_polygon}
\end{figure}

\B

\subsection{Multivariate B-splines}\label{sect:bivariate}
Multivariate B-splines are defined from univariate B-splines by tensor
product, see for instance \cite{Piegl,DeBoor}. Anchors are defined
in a similar way. We give here a quick overview. 

\subsubsection{Knot vectors, B-spline functions, anchors, Greville sites}\label{sect:b-anchors}
Let $d$ be the space dimensions (in practical cases,
$d=2,3$). Assume  $n_\ell \in \N$, the degree $p_\ell \in \N$ and  the
\mbox{$p_\ell$-open} knot vector  $\Xi_\ell = \{\xi_{\ell,1}, \ldots,
\xi_{\ell,n_\ell + p_\ell + 1} \}$ are given, for $\ell =
1,\ldots,d$. These  knot vectors define a tensor product mesh $\M$ in
the parametric domain $\hat\Omega = (0,1)^d$ 
where, as in Section
\ref{sect:anchors}, we have to  take into account the knot
multiplicity.  The multiplicity of a knot vector in $\Xi_\ell$ is
represented graphically by lines ($d=2$) or planes ($d=3$) one close to the other, while the boundary is treated exactly as in one dimension.

The set of anchors is defined on $\M$ as the Cartesian product
$\A_{p_1,\ldots,p_d}(\Xi_1,\ldots,\Xi_d) = \A_{p_1}(\Xi_1) \times
\ldots \times  \A_{p_d}(\Xi_d)$. Considering, for example, the
trivariate case ($d=3$) and recalling the definitions from Section
\ref{sect:anchors} for the univariate case, we have that: if all
$p_\ell$  are odd the anchors lie at the vertices of the mesh; if both
$p_1$ and $p_2$ are odd and $p_3$ is even, then the anchors are
middle-points of the vertical edges  of  $\M$;  if both $p_1$ and $p_2$ are even and $p_3$ is odd, then the anchors are
centers of the horizontal faces of  $\M$;   if  all $p_\ell$  are
even the anchors lie at the center of the elements  of  $\M$, and so
on.  As in the univariate case, the anchors may be located at the
center of zero length edges or zero area faces or empty elements,
according to  knot repetition.
Also,  the computation of the local knot vectors for each anchor
follows from the univariate case.  Given an anchor $A =(A_1, A_2, A_3)\in
\A_{p_1,p_2,p_3} \equiv \A_{p_1,p_2,p_3}(\Xi_1,\Xi_2,\Xi_3)$, we have that its coordinates are
$(\zeta_1^A, \zeta_2^A, \zeta_3^A) = (\zeta^{A_1}, \zeta^{A_2},
\zeta^{A_3})$.
The three  local knot vectors (one in each coordinate direction)  corresponding to $A$ are defined as $\Xi_i^A := [\Xi_i]_{p_i}^{A_i}$ for $i=1,2,3$ and  
 the B-spline  associated to $A$ is constructed by tensor product as: 
\begin{equation}\label{eq:Bsplines-trivariate}
B^A_{p_1,p_2,p_3}(\bzeta) = B^{A_1}_{p_1}(\zeta_1)  B^{A_2}_{p_2}(\zeta_2)  B^{A_3}_{p_3}(\zeta_3).
\end{equation}
with $\bzeta = (\zeta_1, \zeta_2, \zeta_3) \in \hat\Omega = (0,1)^3$.  

The B-spline functions \eqref{eq:Bsplines-trivariate} span the space $\sp_{p_1,p_2,p_3}(\Xi_1,\Xi_2,\Xi_3)$ (or simply
$\sp_{p_1,p_2,p_3}$), which is the {\Rd space of piecewise
polynomials of degree $p_\ell$ in the $x_\ell$ direction on $\M$}, whose continuity at the internal mesh plane $\zeta_\ell =
\xi_{\ell,k}$  is $C^{p_\ell-m_{\ell,k}}$,  $m_{\ell,k} $ being the
multiplicity of $\xi_{\ell,k} $ in the knot vector $ \Xi_{\ell}$.

To each anchor $A \in\A_{p_1,p_2,p_3}(\Xi_1,\Xi_2,\Xi_3)$ (or, equivalently, to each B-spline
function \eqref{eq:Bsplines-trivariate}) we also associate a Greville
site in the natural way, that is 
\begin{equation}
  \label{eq:greville3D}
 \boldsymbol  \gamma^A = (\gamma_1^A,\gamma_2^A,\gamma_3^A)
\end{equation}
where each $ \gamma_i^A$ is defined as in  \eqref{eq:greville1D}, from
the local knot vector  $\Xi^A_i$. 
Connecting adjacent  Greville sites,  we obtain the \emph{Greville
  mesh} $ \M_G$, which is a regular tensor product  mesh with
all elements of positive volume.

\subsubsection{Spline and NURBS geometries, multi-patch domains}

\label{sec:spline-geometries} 
Analogously to spline curves, a trivariate single-patch spline parametrization of the domain $\Omega \subset\R^3$ is  $\bF:\hat  \Omega \rightarrow
\Omega $ defined as a linear combination of B-splines,
\begin{equation}\label{eq:F-Bspline}
\bF(\bzeta) = \sum_{A \in \A_{p_1,p_2 ,p_3}} \bC^A
B^A_{p_1,p_2,p_3}(\bzeta), \qquad \text{with } \bzeta \in \hat \Omega ,
\end{equation}
where $\bC^A \in \R^3$ are called \emph{control points}. In a similar
way, it is also possible to define bivariate spline domains in $\R^2$
 or surfaces in $\R^3$, which are commonly used  in CAD (see, e.g.,  \cite{Piegl, cohen2001geometric}).

The control points $\bC^A $  have again the same  important role in
the visualization and interaction with geometries within CAD
softwares. Now, the concept of control polygon generalizes to the control mesh $\pM_C$, that is, the mesh connecting the
control points. Figure~\ref{fig:pipe-ctrl-net} shows an example geometry, with its control points and
control mesh. The control mesh defines a 
polyhedral domain, denoted $ \Omega_C$, which is an approximation of
$\Omega$: again,  the control domain
$ \Omega_C$ converges to $\Omega$  under $h$-refinement. 
\begin{figure}[h!]
\centering
\includegraphics[width=.4\textwidth]{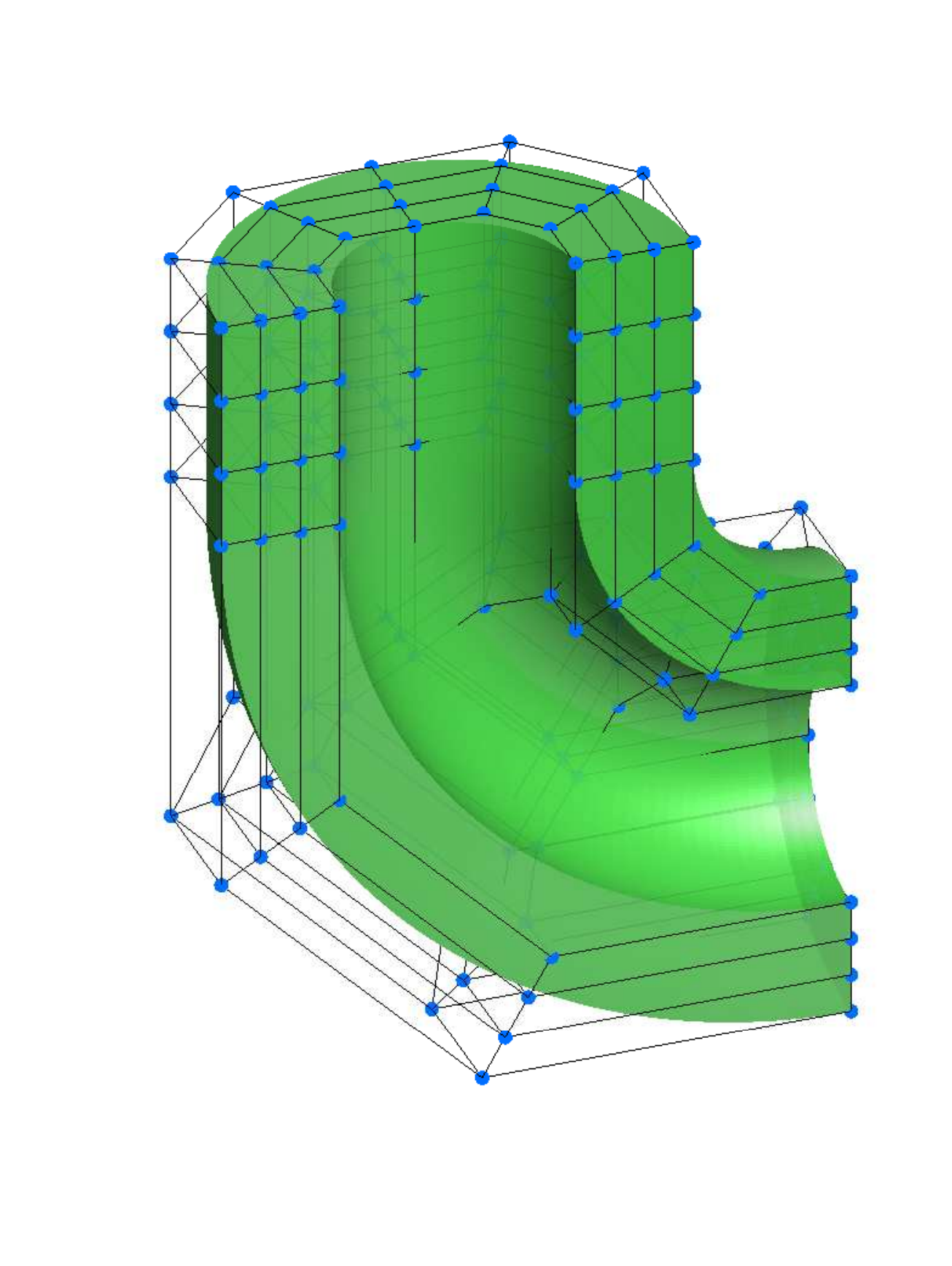}
\vspace*{-1cm}
\caption{Representation of a geometry (green), with its control points (blue)  and control mesh (black lines) for splines of degree $3$.} \label{fig:pipe-ctrl-net}
\end{figure}
  This is
stated as in the univariate case: we introduce the usual Lagrangian
basis for piecewise trilinear polynomials on the tridimensional
Greville mesh $\M_G$, still denoted  by $\lambda^A(\cdot)$, for the
sake of brevity,
\begin{displaymath}
\lambda^A(\boldsymbol \gamma^{A'})=\left \{ 
  \begin{aligned}
    1&\text{ if } A=A',\\
    0&\text{ if } A\neq A'.\\
  \end{aligned}
\right .
\end{displaymath}
The control mesh $\pM_C$ is the image of the Greville mesh $\M_G$ through the piecewise trilinear mapping 
 $\bF_C:\hat \Omega \rightarrow
 \Omega_C$, 
\begin{equation}\label{eq:Fh-Bspline}
 \bF_C(\bzeta) = \sum_{A \in \A_{p_1,p_2 ,p_3}} \bC^A
\lambda^A(\bzeta), \qquad \text{with } \bzeta \in \hat \Omega,
\end{equation}
 which is a parametrization of  $\Omega_C$, since
\begin{displaymath}
  \bF_C ( \boldsymbol  \gamma^A) =  \bC^A.
\end{displaymath}
When $\bF$ is
 smooth enough, as for \eqref{eq:-control-polygon-convergence}, we have
\begin{equation}
  \label{eq:-control-mesh-convergence}
  \underset{\bzeta \in \hat \Omega }{\sup} \| \bF(\bzeta)  -   \bF_C(\bzeta) \| \simeq h^2.
\end{equation}

The control mesh plays a fundamental role in structural mechanics applications where the unknowns are sought as displacements of control points. In our work, we will show how this interpretation can be used also in other contexts. 
\begin{remark}\label{rem:control-points-linear-case}
  When $p_1=p_2=p_3=1$ (and all anchors have multiplicity one) the Greville sites coincide with the anchor representations, i.e.,
  $ \boldsymbol \gamma^A = \bzeta^A$, and $   \bF(\bzeta) = \bF_C (\bzeta)$, $\forall \bzeta \in   \hat \Omega $, that is,  $ \Omega_C$ and  $\Omega$
  coincide.
\end{remark}


 In CAD and isogeometric analysis the geometry is often
parametrized by Non Uniform Rational B-splines (NURBS). 
NURBS are generated  from projective
transformations of splines (see \cite{Piegl}).   A trivariate single-patch NURBS
parametrization of the domain $\Omega \subset\R^3$ is a function  $\bF:\hat  \Omega \rightarrow
\Omega $ defined   as quotient of  linear combination of  B-splines, 
\begin{equation}\label{eq:F}
\bF(\bzeta) = \frac{\sum_{A \in \A_{p_1,p_2 ,p_3}}   \bC^A w^A B^A_{p_1,p_2 ,p_3}(\bzeta)}{\sum_{A'\in \A_{p_1,p_2 ,p_3}} w^{A'} B^{A'}_{p_1,p_2 ,p_3}(\bzeta)}, \qquad \bzeta \in \hat\Omega,
\end{equation}
where $ \bC^A $ are the NURBS control points  and  $w^A$ the positive
NURBS  weights.

In order to enhance flexibility and allow for more complex  geometries, the  definition of tensor-product spline and NURBS
parametrized domain can be generalized to domains that are union of
$N$ images of  cubes; precisely 
\begin{equation}
  \label{eq:F-multi-patch}
  \text{closure } (\Omega )= \bigcup_{k=1,\ldots,N} \text{closure } (\Omega_k)
\end{equation}
where the $ \Omega_k = \bF_k (\hat \Omega)$ are referred to as \emph{patches}, and are assumed to be disjoint. Each patch has its own
parametrization $ \bF_k $, defined on its own spline or NURBS space. The
whole $\Omega$ is then referred  to as a \emph{multi-patch} domain.
For the construction of discrete fields on  a \emph{multi-patch}
domain $\Omega$ we will introduce in Section
\ref{sec:multi-patch-complex-continuity} suitable  \emph{conformity} assumptions. These will
restrict the framework to configurations where it is easy to implement
the proper continuity of the fields at the patches interface.  

In this paper,  $\Omega$ is assumed to be  parametrized  either  by  
spline or NURBS functions but the unknown fields are always
constructed by splines. This means that, in case of NURBS
geometries,  we  leave the isoparametric concept which is a
fundamental assumption for isogeometric methods in the context of continuum mechanics (see \cite{IGA-book}). 



\section{The spline complex}
\label{sec:spline-complex}
This section is devoted to present the \mbox{spline} spaces that are
compatible with the  De~Rham complex. The definition of the spaces is
taken from \cite{BRSV11}, and is given in three dimensions (though the
same construction is generalizable  to arbitrary dimension). We first
recall the construction on the parametric domain $\Omegaref$, and then
the discrete spaces  on the physical domain $\Omega$ are obtained  
 by the {\it push-forward} mapping associated to \eqref{def:iota3}. As shown
in \cite{BRSV11}, it is
also possible to complement these spaces with commuting and continuous
projectors, in the setting of the so called Finite Element Exterior
Calculus (see \cite{AFW06}), however  this issue is not discussed
here. Instead, we discuss  the selection of a suitable 
basis for the implementation of the proposed spaces, and  the meaning
of the associated degrees-of-freedom. We will see that the proposed spline spaces
are a natural high-order extension of classical low-order N\'ed\'elec hexahedral
finite elements of the first family (see \cite{NED80}),  obtained in this setting for degree $p_1=p_2=p_3=1$, and that in a natural way they  are related to
cochain or chain complexes of the mesh where they are defined.

\subsection{Complex on the parametric domain $\hat \Omega$}
\label{sec:compatible -splines-parametric-domain}

We recall the following property of univariate splines, from Section
\ref{sect:knot_vector}: the derivative  of a (continuous) spline is a
spline, and in particular
\begin{equation}
  \label{eq:1D-diagram}
        \begin{CD}
{S_{p}(\Xi)} @ > \frac{d}{d\zeta}>>  {S_{p-1}(\Xi')},
\end{CD}
\end{equation}
where $\Xi'$ is the $(p-1)$-open knot vector that coincides with 
 the $p$-open knot vector  $\Xi$ except for the boundary knot 
repetitions. Moreover, we have that the
derivative of the B-spline associated to an anchor $A$ in
$\A_p(\Xi)$ is a linear combination of the B-splines
associated to the previous and next  adjacent anchors $A^-$ and $A^+$ in $\A_{p-1}(\Xi')$ (only one
adjacent anchor for the first and last $A \in \A_p(\Xi)$); precisely,
denoting by $\Xi_p^A $ the local knot vector (formed by  $p+2$ knots) of $A$
and  by  $ \Xi_{p-1}^{A^\pm} $ the local knot vectors  (formed by  $p+1$
knots) of $A^\pm$, and  with the general  notation of  Section
\ref{sect:knot_vector}, we have
\begin{equation}  \label{eq:derivative-of-splines}
\frac{d}{d\zeta}  N[\Xi_p^A](\cdot) = \frac{p}{|\Xi_{p-1}^{A^-} |}
N[\Xi_{p-1}^{A^-}](\cdot)   - \frac{p}{|\Xi_{p-1}^{A^+} |} N[\Xi_{p-1}^{A^+}](\cdot) ,
\end{equation}
where $ |\Xi_{p-1}^{A^\pm} |$ are the length of the support of the
$(p-1)$-degree B-splines $
N[\Xi_{p-1}^{A^\pm}]$, that is, the difference of the last and first
knots in the local knot vectors $ \Xi_{p-1}^{A^\pm} $. An example is given in Figure~\ref{fig:derivative}. When $A$ is the first
(resp., last) anchor, \eqref{eq:derivative-of-splines} holds with
$N[\Xi_{p-1}^{A^-}]=0 $ (resp., $N[\Xi_{p-1}^{A^+}]=0$). This is a well known
property of B-splines (see \cite{Piegl,DeBoor}) that also suggests the
following scaling of the basis functions of $S_{p}(\Xi)$ and  $S_{p-1}(\Xi')$

\begin{equation}
  \label{eq:spline-basis}
  S_{p}(\Xi) = \text{span} \left \{ B^A_p(\cdot) \equiv
    N[\Xi_p^A](\cdot):\, A \in \A_p(\Xi) \right \} ,
\end{equation}
\begin{equation}
  \label{eq:spline-derivative-basis}
  S_{p-1}(\Xi') = \text{span}\left \{ D^A_{p-1}(\cdot)
    =\frac{p}{|\Xi_{p-1}^{A}|} N[\Xi_{p-1}^A](\cdot):\, A \in \A_{p-1}(\Xi')
  \right \} .
\end{equation}
The  scaling in \eqref{eq:spline-derivative-basis} gives the Curry-Schoenberg B-splines (see
\cite[Ch.~IX]{DeBoor}), that have been already used in isogeometric
analysis in \cite{ratnani}. 
Indeed, with the bases \eqref{eq:spline-basis} and
\eqref{eq:spline-derivative-basis}, the matrix associated to the
operator $ \frac{d}{d\zeta}$ is the \emph{edge-vertex incidence} matrix
related to the mesh $\M$,
when $p$ is odd, or the \emph{vertex-edge incidence} matrix related to $\M$,
when $p\geq 2$ is even. We recall that $\M$ also contains zero length
edges and repeated vertices.

\begin{figure}[h!]
\centering
\includegraphics[width=.5\textwidth]{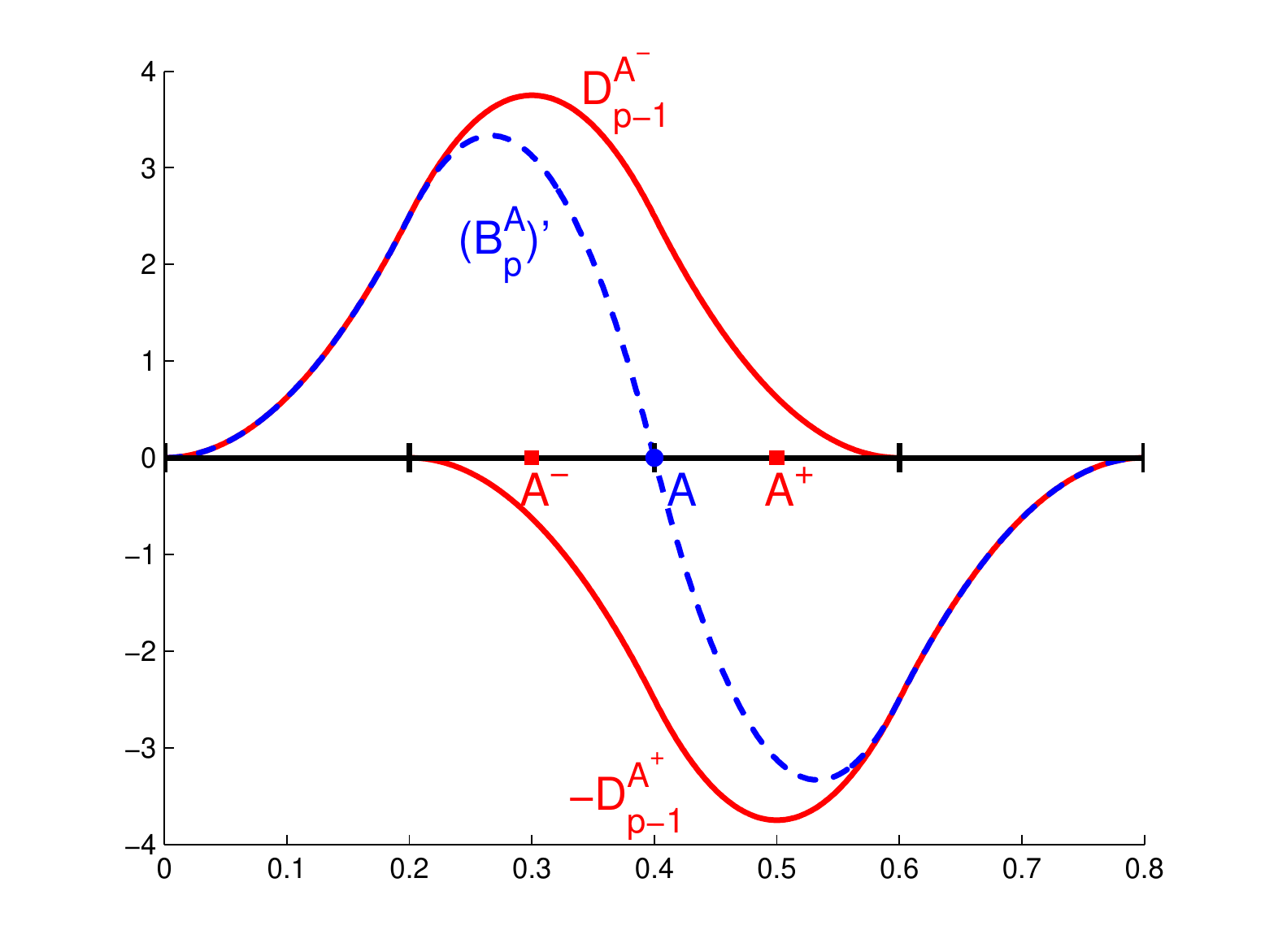}
g\caption{Derivative of the spline associated to the anchor $A$ as a linear combination of the splines associated to $A^-$ and $A^+$.} \label{fig:derivative}
\end{figure}

The observations above are the key ingredients of the trivariate
construction. Following \cite{BRSV11}, and using the notation of
Section \ref{sect:bivariate},  we introduce the following discrete
\B  spaces \B on the parametric domain $\hat \Omega$

\begin{equation}
    \label{eq:discrete-forms-on-omegahat}
  \begin{aligned}
\Szeroh &:= \Honesp , \\
\Soneh &:= \Hcurlsp ,  \\
\Stwoh &:= \Hdivsp , \\
\Sthreeh &:= \Ltwosp .
\end{aligned}
\end{equation}
From \eqref{eq:1D-diagram},  they form a De Rham complex:
\begin{equation}\label{eq:3D-parametric-diagram}
\begin{CD}
 \mathbb{R} @>>>\Szeroh @>\hat \grad>> \Soneh @>\hat \curl>> \Stwoh @>\hat \div>> \Sthreeh@>>> 0. 
\end{CD}
\end{equation}

Moreover, we have the following result.
\begin{theorem}\label{th:exact}
  The sequence \eqref{eq:3D-parametric-diagram} is exact. 
\end{theorem}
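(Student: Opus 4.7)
The plan is to reduce the trivariate exactness to the one-dimensional case by exploiting the tensor-product structure of the spaces in \eqref{eq:discrete-forms-on-omegahat}, and then invoke a Künneth-type argument.

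First, I would establish the univariate short exact sequence
$$
0 \to \mathbb{R} \to S_p(\Xi) \xrightarrow{d/d\zeta} S_{p-1}(\Xi') \to 0.
$$
The kernel of $d/d\zeta$ consists precisely of the constants, since at least $C^0$-continuity at the internal knots forces a piecewise polynomial with vanishing derivative to be globally constant. For surjectivity, given $g \in S_{p-1}(\Xi')$, the antiderivative $G(\zeta) := \int_0^\zeta g(t)\,dt$ is piecewise polynomial of degree $p$; at each internal knot its regularity is one higher than that of $g$, which is exactly what the knot vector $\Xi$ prescribes, and the boundary multiplicities of $\Xi'$ are such that no additional smoothness is imposed at $\zeta=0,1$. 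Hence $G \in S_p(\Xi)$.

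Next, I would identify the three-dimensional complex as the (graded) tensor product of three copies of this univariate complex. Writing $S^i := S_{p_i}(\Xi_i)$ and $(S')^i := S_{p_i-1}(\Xi'_i)$, one has
$$
\Szeroh = S^1 \otimes S^2 \otimes S^3, \qquad \Sthreeh = (S')^1 \otimes (S')^2 \otimes (S')^3,
$$
and the three Cartesian components of $\Soneh$ (respectively $\Stwoh$) are obtained by replacing one factor $S^i$ with $(S')^i$ (respectively two factors). Under this identification, the differential operators $\hat\grad$, $\hat\curl$, $\hat\div$ act componentwise as $d/d\zeta_i \otimes \mathrm{Id} \otimes \mathrm{Id}$ etc., so \eqref{eq:3D-parametric-diagram} is exactly the total complex associated to the triple tensor product of the univariate complex established above.

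Since all vector spaces are finite-dimensional over the field $\mathbb{R}$, the Künneth formula applies trivially: the tensor product of exact complexes is exact, so \eqref{eq:3D-parametric-diagram} is exact. Alternatively, one can prove each of the four exactness steps by hand using Fubini-type arguments: e.g.\ for exactness at $\Soneh$, if $\bu = (u_1,u_2,u_3) \in \Soneh$ satisfies $\hat\curl\bu = 0$, one defines the potential $\phi$ by successive integration in $\zeta_1$, $\zeta_2$, $\zeta_3$ of $u_1$, $u_2$, $u_3$ using the univariate antiderivative above, correcting by the ``constants of integration'' (which depend on the remaining variables and live in the appropriate lower-dimensional tensor factors).

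The main obstacle is purely bookkeeping: checking that the antiderivative construction keeps the result inside the correct anisotropic spline space, i.e.\ that raising the degree by one in the direction being integrated and keeping the other two degrees untouched reproduces exactly the multi-degrees and knot vectors prescribed in \eqref{eq:discrete-forms-on-omegahat}. The tensor-product viewpoint makes this automatic and is therefore the cleanest route.
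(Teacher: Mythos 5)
Your proposal is correct, but it follows a genuinely different route from the paper's. The paper proves exactness of \eqref{eq:3D-parametric-diagram} degree by degree: the inclusion $\mathrm{ker}(\hat\curl)\subseteq\mathrm{im}(\hat\grad)$ and the surjectivity of $\hat\div$ are obtained from explicit Poincar\'e-type potentials built by line integration (formulas \eqref{eq:phi-h-poincare} and \eqref{eq:phi-h-poincare2}), while the remaining identity $\mathrm{im}(\hat\curl)=\mathrm{ker}(\hat\div)$ is settled by a dimension count based on $\dim S_{p_\ell}(\Xi_\ell)=n_\ell$ and $\dim S_{p_\ell-1}(\Xi'_\ell)=n_\ell-1$. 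You instead reduce everything to the univariate sequence $0\to\R\to S_p(\Xi)\to S_{p-1}(\Xi')\to 0$ and view \eqref{eq:3D-parametric-diagram} as the total complex of the triple tensor product, concluding by K\"unneth; this handles all four exactness statements at once. Two points of precision you should fix when writing this up: the univariate \emph{unaugmented} two-term complex $S_p(\Xi)\to S_{p-1}(\Xi')$ (which is what actually gets tensored to produce $\Szeroh,\dots,\Sthreeh$) is not exact --- its cohomology is $\R$ in degree zero --- so the correct invocation is that over the field $\R$ the cohomology of the tensor-product complex is the tensor product of the cohomologies, giving $H^0=\R$ and $H^k=0$ for $k\ge 1$, which is precisely the claimed exactness; and the identification of $\hat\grad,\hat\curl,\hat\div$ with the total-complex differential requires the usual Koszul sign bookkeeping (routine, as you acknowledge). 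As for what each approach buys: yours is cleaner, dimension-independent and avoids the dimension count, but it leans entirely on the tensor-product structure of \eqref{eq:discrete-forms-on-omegahat}; the paper deliberately chooses the explicit-potential-plus-dimension-count argument because, as stated at the start of its proof, the same strategy is reused in Section~\ref{sect:T-compatible-parametric} for the T-spline complex of Theorem~\ref{thm:T-spline-complexes}, where T-junctions destroy the global tensor-product structure and a K\"unneth argument is no longer available.
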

\begin{proof}
This result has been already presented in \cite{BRSV11}. We present an alternative proof, that will be useful Section~\ref{sect:T-compatible-parametric}.

We have to show that in  \eqref{eq:3D-parametric-diagram} it holds
\begin{equation}
  \label{eq:exactenss-splines-1}
\R =  \mathrm{ker}( \hat \grad),
\end{equation}
\begin{equation}
  \label{eq:exactenss-splines-2}
  \mathrm{im}(\hat \grad) =  \mathrm{ker}( \hat\curl),
\end{equation}
\begin{equation}
  \label{eq:exactenss-splines-3}
  \mathrm{im}(\hat\curl)=   \mathrm{ker}( \hat\div),
\end{equation}
\begin{equation}
  \label{eq:exactenss-splines-4}
  \mathrm{im}(\hat\div)=   \Sthreeh.
\end{equation}
In particular, we have to prove the inclusion $\supseteq$  in
\eqref{eq:exactenss-splines-1}--\eqref{eq:exactenss-splines-4}, since
the other inclusion $\subseteq$ is trivial in all cases. It is also trivial that
\begin{displaymath}
  \R \supseteq \mathrm{ker}( \hat \grad).
\end{displaymath}

Let  $\hat \bu = (\hat u_1,\hat u_2,\hat u_3)\in\Soneh $, then define
\begin{equation}
  \label{eq:phi-h-poincare}
  \hat \phi(\zeta_1,\zeta_2,\zeta_3) = \int _0^{\zeta_1}  \hat
  u_1(\eta,0,0)\, d \eta + \int _0^{\zeta_2}  \hat
  u_2(\zeta_1,\eta,0)\, d \eta  + \int _0^{\zeta_3}  \hat
  u_3(\zeta_1,\zeta_2,\eta)\, d \eta;
\end{equation}
it is easy to check that  $ \hat \bu = \hat \grad \hat
\phi $ when  $\hat \curl  \bu =
\boldsymbol 0  $, and that   $\hat\phi \in \Szeroh$; then 
\begin{displaymath}
   \mathrm{im}(\hat \grad)\supseteq  \mathrm{ker}( \hat\curl).
\end{displaymath}

Consider $\hat \varphi \in\Sthreeh $, and define  $\hat \bv = (\hat
v_1,0,0)\in\Soneh$  such that
\begin{equation}
  \label{eq:phi-h-poincare2}
  \hat v_1(\zeta_1,\zeta_2,\zeta_3) = \int _0^{\zeta_1}  \hat
  \varphi(\eta,\zeta_2,\zeta_3)\, d \eta, 
\end{equation}
as before, it is easy to check that  $ \hat \varphi = \hat \div \hat
\bv $ and that   $\hat\bv \in \Stwoh$; then 
\begin{displaymath}
  \mathrm{im}(\hat\div)\supseteq   \Sthreeh.
\end{displaymath}

In order to complete the proof we need to show that 
\begin{displaymath}
    \mathrm{im}(\hat\curl)\supseteq   \mathrm{ker}( \hat\div),
\end{displaymath}
which is implied by
\begin{equation}
  \label{eq:temp2}
    \mathrm{dim}  (\mathrm{im} (\hat\curl)) =  \mathrm{dim}(\mathrm{ker}( \hat\div)).
\end{equation}
To count dimensions recall from Section \ref{sect:knot_vector} that  $
\mathrm{dim}(S_{p_\ell}(\Xi_\ell))= n_\ell$, $
\mathrm{dim}(S_{p_\ell-1}(\Xi'_\ell))= n_\ell-1$; then from Section
\ref{sect:bivariate} and from  \eqref{eq:discrete-forms-on-omegahat}
we get 
\begin{equation}
    \label{eq:dim-of-discrete-forms-on-omegahat}
  \begin{aligned}
\mathrm{dim}(\Szeroh)& = n_1 n_2 n_3 , \\
\mathrm{dim}(\Soneh)&= (n_1 -1 )  n_2 n_3 +  n_1 ( n_2-1) n_3 + n_1 n_2 (n_3-1),  \\
\mathrm{dim}(\Stwoh) &= n_1 (n_2-1)  (n_3-1) + (n_1 -1 ) n_2 (n_3-1)+(n_1 -1 )( n_2-1)  n_3, \\
\mathrm{dim}(\Sthreeh) &= (n_1 -1 )( n_2-1)(n_3-1) .
\end{aligned}
\end{equation}
Then, by \eqref{eq:exactenss-splines-1}--\eqref{eq:exactenss-splines-2},
\begin{equation*}
  \label{eq:dimension-count-1}
  \begin{aligned}
    \mathrm{dim}  (\mathrm{im} (\hat\curl))  &  =  \mathrm{dim}
    (\Soneh) -   \mathrm{dim}  (\mathrm{ker} (\hat\curl)) \\
  &  =  \mathrm{dim}  (\Soneh) -   \mathrm{dim}  (\mathrm{im} (\hat\grad))\\
&  =  \mathrm{dim}  (\Soneh) -   \mathrm{dim}  (\Szeroh) +
\mathrm{dim}  (\mathbb{R})\\
& = 2 n_1 n_2 n_3 - n_2 n_3 - n_1 n_3  -  n_1 n_2 + 1
  \end{aligned}
\end{equation*}
and by \eqref{eq:exactenss-splines-4}
\begin{equation*}
  \label{eq:dimension-count-2}
  \begin{aligned}
     \mathrm{dim}(\mathrm{ker}( \hat\div))  &  =  \mathrm{dim}
    (\Stwoh) -   \mathrm{dim}  (\mathrm{im} (\hat\div)) \\  &  =  \mathrm{dim}
    (\Stwoh) -   \mathrm{dim}  (\Sthreeh)  \\
& = 2 (n_1-1) (n_2 -1)(n_3 -1) +   (n_2 -1)(n_3 -1) + (n_1-1) (n_3 -1)  +  (n_1-1)  (n_2 -1)\\& = 2 n_1 n_2 n_3 - n_2 n_3 - n_1 n_3  -  n_1 n_2 + 1,
  \end{aligned}
\end{equation*}
which gives \eqref{eq:temp2}, and as a consequence \eqref{eq:exactenss-splines-3}.

\end{proof}

We now show how suitable basis functions for the spaces can be constructed and as well associated 
to geometric entities of the mesh $\M$ by using the concept of anchors.  We focus on basis functions first, and inspired by 
\eqref{eq:spline-basis}--\eqref{eq:spline-derivative-basis}
we define them as follows: \B 

\begin{equation}
  \label{eq:0-forms-basis}
  \Szeroh = \text{span} \left \{ \boldsymbol \zeta \mapsto
    B^{A_1}_{p_1}(\zeta_1)  B^{A_2}_{p_2}(\zeta_2)
    B^{A_3}_{p_3}(\zeta_3):\, A= (A_1, A_2, A_3) \in
\A_{p_1,p_2,p_3}(\Xi_1,\Xi_2,\Xi_3) \right \} ,
\end{equation}
\begin{equation}
  \label{eq:1-forms-basis}
  \begin{aligned}
     \Soneh & = \text{span }  I\cup II \cup III, \text{ with}\\
 I & = \left \{ \boldsymbol \zeta \mapsto  D^{A_1}_{p_1-1}(\zeta_1)  B^{A_2}_{p_2}(\zeta_2)
    B^{A_3}_{p_3}(\zeta_3)  \hat \be_1:\, A= (A_1, A_2, A_3) \in
\A_{p_1-1,p_2,p_3}(\Xi'_1,\Xi_2,\Xi_3) \right \} ,\\
 II & = \left \{ \boldsymbol \zeta \mapsto  B^{A_1}_{p_1}(\zeta_1)  D^{A_2}_{p_2-1}(\zeta_2)
    B^{A_3}_{p_3}(\zeta_3) \hat\be_2:\, A= (A_1, A_2, A_3) \in
\A_{p_1,p_2-1,p_3}(\Xi_1,\Xi'_2,\Xi_3) \right \} ,\\
 III& = \left \{ \boldsymbol \zeta \mapsto 
B^{A_1}_{p_1}(\zeta_1)  B^{A_2}_{p_2}(\zeta_2)
    D^{A_3}_{p_3-1}(\zeta_3)  \hat\be_3:\, A= (A_1, A_2, A_3) \in
\A_{p_1,p_2,p_3-1}(\Xi_1,\Xi_2,\Xi'_3) \right \} ,
  \end{aligned}
\end{equation}

 \begin{equation}
  \label{eq:2-forms-basis}
  \begin{aligned}
     \Stwoh & = \text{span }  I \cup II \cup III , \text{ with}\\
 I  & = \left \{ \boldsymbol \zeta \mapsto  B^{A_1}_{p_1}(\zeta_1)  D^{A_2}_{p_2-1}(\zeta_2)
   D^{A_3}_{p_3-1}(\zeta_3) \hat \be_1 :\, A= (A_1, A_2, A_3) \in
\A_{p_1,p_2-1,p_3-1}(\Xi_1,\Xi'_2,\Xi'_3) \right \} ,\\
 II & = \left \{ \boldsymbol \zeta \mapsto D^{A_1}_{p_1-1}(\zeta_1)  B^{A_2}_{p_2}(\zeta_2)
    D^{A_3}_{p_3-1}(\zeta_3)  \hat \be_2   :\, A= (A_1, A_2, A_3) \in
\A_{p_1-1,p_2,p_3-1}(\Xi'_1,\Xi_2,\Xi'_3) \right \} ,\\
 III & = \left \{ \boldsymbol \zeta \mapsto D^{A_1}_{p_1-1}(\zeta_1)  D^{A_2}_{p_2-1}(\zeta_2)
    B^{A_3}_{p_3}(\zeta_3) \hat \be_3  :\, A= (A_1, A_2, A_3) \in
\A_{p_1-1,p_2-1,p_3}(\Xi'_1,\Xi'_2,\Xi_3) \right \} ,
  \end{aligned}
\end{equation}
\begin{equation}
  \label{eq:3-forms-basis}
  \Sthreeh = \text{span} \left \{ \boldsymbol \zeta \mapsto
    D^{A_1}_{p_1-1}(\zeta_1)  D^{A_2}_{p_2-1}(\zeta_2)
    D^{A_3}_{p_3-1}(\zeta_3):\, A= (A_1, A_2, A_3) \in
\A_{p_1-1,p_2-1,p_3-1}(\Xi'_1,\Xi'_2,\Xi'_3) \right \} ,
\end{equation}
where $\{\hat \be_\ell\}_{\ell=1,2,3}$ denote the canonical basis of
$\R^3$. We remark that all basis functions defined in
\eqref{eq:0-forms-basis}-\eqref{eq:3-forms-basis} are non-negative.

 We discuss now the association of the anchors of the bases
\eqref{eq:0-forms-basis}--\eqref{eq:3-forms-basis} to the mesh $\M$
that is associated to $\Szeroh$, that is, obtained from the knot
vectors $ \Xi_1,\Xi_2,\Xi_3$. 
We focus on the relevant case $p=p_1=p_2=p_3$ and consider two possible choices: $p$ is odd, or $p$ is even. \B

  When $p$ is odd, as an immediate consequence of the definition of anchors in one space dimension, we have that:
\begin{itemize}
\item anchors associated with $\Szeroh$ are $\A_{p,p,p}(\Xi_1,\Xi_2,\Xi_3)$, which are located at the vertices of $\M$, i.e., there is one degree of freedom per vertex;
\item anchors associated with $\Soneh$ are located at edges of $\M$
  and there is one degree of freedom per edge. Indeed, e.g., anchors
  associated with the first component of the space $\Soneh$, which is
  $S_{p-1,p,p} (\Xi'_1, \Xi_2, \Xi_3)$, are
  $\A_{p-1,p,p}(\Xi'_1,\Xi_2,\Xi_3)$ and are located at the edges
  oriented as $\hat \be_1$. This means that to each edge of the mesh a is associated a basis function tangential to the edge. 
\item anchors associated with $\Stwoh$ are located at faces and there
  is one anchor per face. More in detail, if we consider the first
  component of $\Stwoh$, which is  $S_{p,p-1,p-1} (\Xi_1, \Xi'_2,
  \Xi'_3)$, the corresponding anchors  are
  $\A_{p,p-1,p-1}(\Xi_1,\Xi'_2,\Xi'_3)$  and are located at the
  barycenter of the faces $f$ such that $f$ is orthogonal to
  $\hat \be_1$.  This means that a basis functions normal to the
  face is associated to the face. 
\item anchors associated with $\Sthreeh$ are $\A_{p-1,p-1,p-1}(\Xi'_1,\Xi'_2,\Xi'_3)$  and located at barycentres of all elements of $\M$. 
\end{itemize}

We now turn to the case when of even degree $p\geq 2$,
$p_1=p_2=p_3=p$.  Note that, according to our definition, and as explained in Section~\ref{sect:knot_vector}, the
meshes corresponding to  the spaces $\Soneh$, $\Stwoh$ and $\Sthreeh$
differ from $\M$ due to the different number of repeated lines at the
boundary.  Instead of working with different meshes for different
spaces, equivalently, we represent in this case too   the anchors of all
spaces on the mesh $\M$ of $\Szeroh$, keeping into account only the
interior geometrical entities for the representations of anchors of
$\Soneh$, $\Stwoh$ and $\Sthreeh$. \B

 Using the definition of anchors we immediately deduce the following: 
\begin{itemize}
\item  anchors associated with $\Szeroh$ are at the barycentres of all elements in $\M$;
\item anchors associated with $\Soneh$ are attached to the barycentres
  of \emph{internal} faces of $\M$ and the corresponding vector basis function is normal to the face;
\item anchors associated with $\Stwoh$ are attached to  
  \emph{internal} edges of $\M$ and the corresponding vector basis function is tangent to its corresponding edge; 
\item anchors associated with $\Sthreeh$ are attached to  
  \emph{internal}  vertices of $\M$. 
\end{itemize}

Clearly,  the positivity of the bases  induces an orientation of edges
and faces of the mesh $\M$. \B

 With the bases \eqref{eq:0-forms-basis}--\eqref{eq:3-forms-basis}, the discrete differential operators in
\eqref{eq:3D-parametric-diagram} are represented by incidence matrices
for the corresponding geometrical entities. If  $p \equiv
p_1 = p_2 = p_3$ is odd, then the operator $\hat \grad$ is represented by the
\emph{edge-vertex} incidence matrix of $\M$ and when $p\geq 2$ is
even, by  the \emph{face-element} incidence  matrix of $\M$.  
 We observe that, unlike in compatible finite elements, the matrices
representing the differential operators in the selected bases
\eqref{eq:0-forms-basis}--\eqref{eq:3-forms-basis} are
essentially independent of the degree. 

The fundamental consequence of the  observations above is stated in
the following proposition. 

\begin{proposition}
  \label{p:cochain-and-chain}
The following holds:
  \begin{itemize}
\item The spline complex \eqref{eq:3D-parametric-diagram} for odd degree $p$ is isomorphic to the cochain complex associated with the partition $\M$. 
 \item The spline complex \eqref{eq:3D-parametric-diagram} for even
   degree $p$ is isomorphic to the chain complex associated with the
   partition $\M$  \B  without  its boundary, that is, when only the
   interior geometrical entities (faces,
   edges and vertices) are taken    into account, as seen above. \B 
\end{itemize}
\end{proposition}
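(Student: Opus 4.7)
The plan is to prove the proposition in three stages: first establish a graded vector space isomorphism between the spline complex and the (co)chain complex using the anchor-to-entity correspondence already laid out, then show that under the chosen Curry--Schoenberg bases the matrices of $\hat\grad,\hat\curl,\hat\div$ are exactly the incidence matrices of the mesh, and finally combine these two facts into an isomorphism of differential complexes.

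For the first stage, for odd $p$ the bullet list preceding the proposition already exhibits one-to-one correspondences $\Szeroh\leftrightarrow \{\text{vertices of }\M\}$, $\Soneh\leftrightarrow\{\text{edges of }\M\}$, $\Stwoh\leftrightarrow\{\text{faces of }\M\}$, $\Sthreeh\leftrightarrow\{\text{elements of }\M\}$, i.e.\ a graded linear isomorphism with $C^\bullet(\M)$. For even $p$ the correspondence is the opposite: $\Szeroh$ with elements, $\Soneh$ with interior faces, $\Stwoh$ with interior edges, $\Sthreeh$ with interior vertices, matching the chain complex $C_\bullet(\mathring\M)$ of the interior of $\M$. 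So in either case I get a graded vector space isomorphism $\Phi:\bigoplus_k \hat X^k_h\to K^\bullet$, where $K^\bullet$ is the cochain or chain complex of $\M$ (or $\mathring\M$).

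The core step is the second one. I would start from the one-dimensional identity \eqref{eq:derivative-of-splines}, rewritten in the Curry--Schoenberg basis \eqref{eq:spline-derivative-basis} as $\frac{d}{d\zeta}B^A_p = D^{A^-}_{p-1}-D^{A^+}_{p-1}$, so that its matrix in the chosen bases has entries in $\{0,\pm 1\}$ given by the univariate edge--vertex (odd $p$) or vertex--edge (even $p$) incidence. Extending by tensor product to three dimensions, differentiation in the $\ell$-th coordinate is just the univariate incidence in direction $\ell$ tensored with identities in the other two directions. Applied componentwise, $\hat\grad$ on \eqref{eq:0-forms-basis} produces precisely the edge--vertex (or face--element) incidence matrix of $\M$; the usual Leibniz-type sign pattern of $\hat\curl$ on \eqref{eq:1-forms-basis} yields the face--edge (or edge--face) incidence matrix; and $\hat\div$ on \eqref{eq:2-forms-basis} yields the element--face (or vertex--edge) incidence matrix. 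Each of these calculations is a direct bookkeeping exercise using the tensor-product definitions \eqref{eq:Bsplines-trivariate} and the factorised form of the basis functions $I,II,III$.

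The delicate point, and what I expect to be the main obstacle, is fixing orientations so that the $\pm$ signs coming out of Curry--Schoenberg differentiation coincide with the coboundary/boundary signs on $K^\bullet$ for a consistent global orientation of edges, faces and elements of $\M$. I would adopt the orientation induced by the canonical frame $\{\hat\be_\ell\}$: an edge carries the direction of the $\hat\be_\ell$ aligned with it, a face carries the normal $\hat\be_\ell$ orthogonal to it, and elements the standard orientation of $\R^3$. With this choice the sign of each incidence entry produced by \eqref{eq:derivative-of-splines} matches the $(-1)^{\cdot}$ conventions of the cellular (co)boundary, and the squared-to-zero identities $\hat\curl\,\hat\grad=0$ and $\hat\div\,\hat\curl=0$ recover $\partial\circ\partial=0$ cellwise. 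Once this orientation check is in place, the diagram
\begin{equation*}
\begin{CD}
\Szeroh @>\hat\grad>> \Soneh @>\hat\curl>> \Stwoh @>\hat\div>> \Sthreeh\\
@V\Phi VV @V\Phi VV @V\Phi VV @V\Phi VV\\
K^0 @>d>> K^1 @>d>> K^2 @>d>> K^3
\end{CD}
\end{equation*}
commutes by construction, and since $\Phi$ is a graded bijection the spline complex is isomorphic to $K^\bullet$ in the claimed form (cochain complex of $\M$ for odd $p$, chain complex of $\mathring\M$ for even $p$), concluding the proof.
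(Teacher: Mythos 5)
Your argument is correct and takes essentially the same route as the paper, which offers no separate proof but presents the proposition as the immediate consequence of the preceding observations: the anchor-to-entity correspondences for odd and even $p$ and the fact that, in the bases \eqref{eq:0-forms-basis}--\eqref{eq:3-forms-basis} built on the 1D relation \eqref{eq:derivative-of-splines}, the operators $\hat\grad$, $\hat\curl$, $\hat\div$ are represented by the incidence matrices of $\M$. You simply make explicit the tensor-product bookkeeping and the orientation convention that the paper only hints at (``the positivity of the bases induces an orientation of edges and faces of the mesh $\M$''), which is a sound elaboration rather than a different proof.
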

\B

As a matter of fact, this observation, together with the structure of
the matrix representation of differential operators, makes the geometry
of the spline complex for odd degree $p$ very similar, if not equal,
to the one of the finite element complex of lowest order. However  the
spline complex for $p\geq 1$ delivers an approximation which is far
superior than the one of low order finite element.  

For $p$ even  we have instead  a chain complex without explicitly
constructing the dual mesh, which has no analogue in the finite
element framework. 

Moreover, the use of
anchors and the structure of the mesh at the boundary  guarantee  that in both the chain and cochain complex the
boundary is treated in a simple and canonical way. In the case of
finite elements this is not case (see e.g. \cite{BuCh07}, \cite{GeWiCl04,ClThWe99}, \cite{MR1872728}
or \cite{MR2143847} ) and, moreover,  these features can hardly be obtained in
conjunction with high-order finite element techniques. 
Discretization methods based on the use of both chain and cochain complexes in the framework of isogeometric methods are very promising and object of on-going research.\B


We conclude this section with a remark on boundary conditions. 
 Consider the case when homogeneous boundary conditions are
 imposed on the whole boundary $\partial \hat {\Omega}$,  leading  to the definition of the discrete spaces  $\Szerooh := \Szeroh \cap
  \Honeor$, $\Soneoh := \Soneh \cap \Hocurlr$, $\Stwooh := \Stwoh \cap
  \Hodivr$ and $\Sthreeoh :=  \Sthreeh $. These spaces are constructed
  as usual, by removing the functions with non-null trace at the boundary, because univariate  B-spline functions are interpolatory at
  the boundary, as we have discussed in Remark
\ref{rem:interpolatory-B-splines}. The associated De Rham
  complex 
\begin{equation}\label{eq:3D-parametric-diagram-bc}
\begin{CD}
0 @>>>\Szerooh @>\hat \grad>> \Soneoh @>\hat \curl>> \Stwooh @>\hat \div>> \Sthreeoh@> \int>> \mathbb{R}
\end{CD}
\end{equation}
is exact, as easily follows by a variation of the argument of
Theorem~\ref{th:exact}.
 The same holds in more general cases, for example  when
the boundary conditions are imposed on a part of  $\partial \hat
{\Omega}$ formed by the union of some faces of the cube $\Omegaref$. 
Since boundary conditions do not represent a conceptual difficulty, in order to keep the presentation as clear as
possible often in our presentation we will not take them into the
framework.

\subsection{Push-forward to the single-patch physical  domain $\Omega$}
\label{sec:compatible -splines-phys-domain}

Following Section \ref{sec:spline-geometries}, we suppose
that the domain $\Omega$ is obtained from $\hat \Omega$ through a
spline or NURBS single-patch mapping $\bF$. Clearly, we need to choose the space for $\bF$. 
\begin{assumption}[Isogeometric mapping]\label{ass:F}
  We assume that $\bF$ is either a spline function in
  $[\Szeroh]^3$,
  or $\bF$ is  a NURBS function as in \eqref{eq:F}, with numerator in
$[\Szeroh]^3$ and weight denominator in   $\Szeroh$.
\end{assumption}
Assumption \ref{ass:F} is indeed very natural  in the context of
isogeometric methods: it means that the discrete fields are
constructed from the geometry  knot vectors and bases, possibly after
refinement.  

We denote by $\pM$ the image of $\M$ through the mapping $\bF$. 
$\pM$ is then a partition of the physical domain $\Omega$, similar to
the finite element mesh, even though  it  contains elements of
zero area due to knot  multiplicity.  \B
\B

The discrete spaces $\Xzeroh, \ldots ,
\Xthreeh $ on the physical domain $\Omega$ can be  defined from the
spaces \eqref{eq:discrete-forms-on-omegahat}  on the parametric
domain $\Omegaref$ by {\it push-forward}, that is, the inverse of the
transformations  defined in \eqref{def:iota3}, that commute with the
differential operators (as given by the diagrams \eqref{eq:diag1} and
\eqref{eq:diag2}):
\begin{equation} \label{eq:diag-discrete}
\begin{CD}
\mathbb{R} @>>> \Szeroh @>\hat \grad>> \Soneh @>\hat \curl>> \Stwoh @>\hat \div>> \Sthreeh @>>> 0 \\
@. @A\pb^0AA @A\pb^1AA @A\pb^2AA @A\pb^3AA \\
\mathbb{R} @>>> \Xzeroh @>\grad>> \Xoneh @>\curl>> \Xtwoh @>\div>> \Xthreeh @>>> 0, \\
\end{CD}
\end{equation}
 that is, the discrete spaces in the physical domain are defined as
\begin{equation}\label{eq:push-forwarded-discrete-diff-forms}
\begin{aligned}
\Xzeroh := \{ \uzero : \pb^0(\uzero) \in \Szeroh \} , \\
\Xoneh := \{ \uone : \pb^1(\uone) \in \Soneh \}, \\
\Xtwoh := \{ \utwo : \pb^2(\utwo) \in \Stwoh \}, \\
\Xthreeh := \{ \uthree : \pb^3(\uthree) \in \Sthreeh \} .
\end{aligned}
\end{equation}
We remark that the space $\Xoneh$, which is a discretization of $\Hcurl$, is defined through the curl conserving transformation $\pb^1$, and that the space $\Xtwoh$, which is a discretization of $\Hdiv$, is defined through the divergence conforming transformation $\pb^2$. These are equivalent to the curl and divergence preserving
transformations that are used to define edge and face elements,
respectively (see \cite[Sect. 3.9]{Monk}).

Thanks to the properties of the operators  \eqref{def:iota3} the
push-forwarded spaces $\Xzeroh, \ldots ,
\Xthreeh$  inherit the same fundamental properties of $\Szeroh, \ldots ,
\Sthreeh$, that we have discussed in the previous section: 
\begin{itemize}
\item they form an exact
De Rham complex without boundary conditions
\begin{equation}\label{eq:3D-physical-diagram}
\begin{CD}
\mathbb{R}  @>>>\Xzeroh @>  \grad>> \Xoneh @>  \curl>> \Xtwoh @>  \div>> \Xthreeh@>>> 0,
\end{CD}
\end{equation}
 or with boundary conditions
\begin{equation}\label{eq:3D-physical-diagram-bc}
\begin{CD}
0 @>>>\Xzerooh @>  \grad>> \Xoneoh @>  \curl>> \Xtwooh @>  \div>> \Xthreeoh@> \int>> \mathbb{R}.
\end{CD}
\end{equation}
\item the basis functions for $\Xzeroh, \ldots ,
\Xthreeh$ are defined by push-forward of the basis functions of $\Szeroh, \ldots, \Sthreeh$, similarly to
\eqref{eq:push-forwarded-discrete-diff-forms}, and are in  one-to one relation 
with  the images of the anchors through $\bF$. See Figure \ref{fig:pipe_ancore}.
\begin{figure}[h!]
\centering
\includegraphics[width=.4\textwidth]{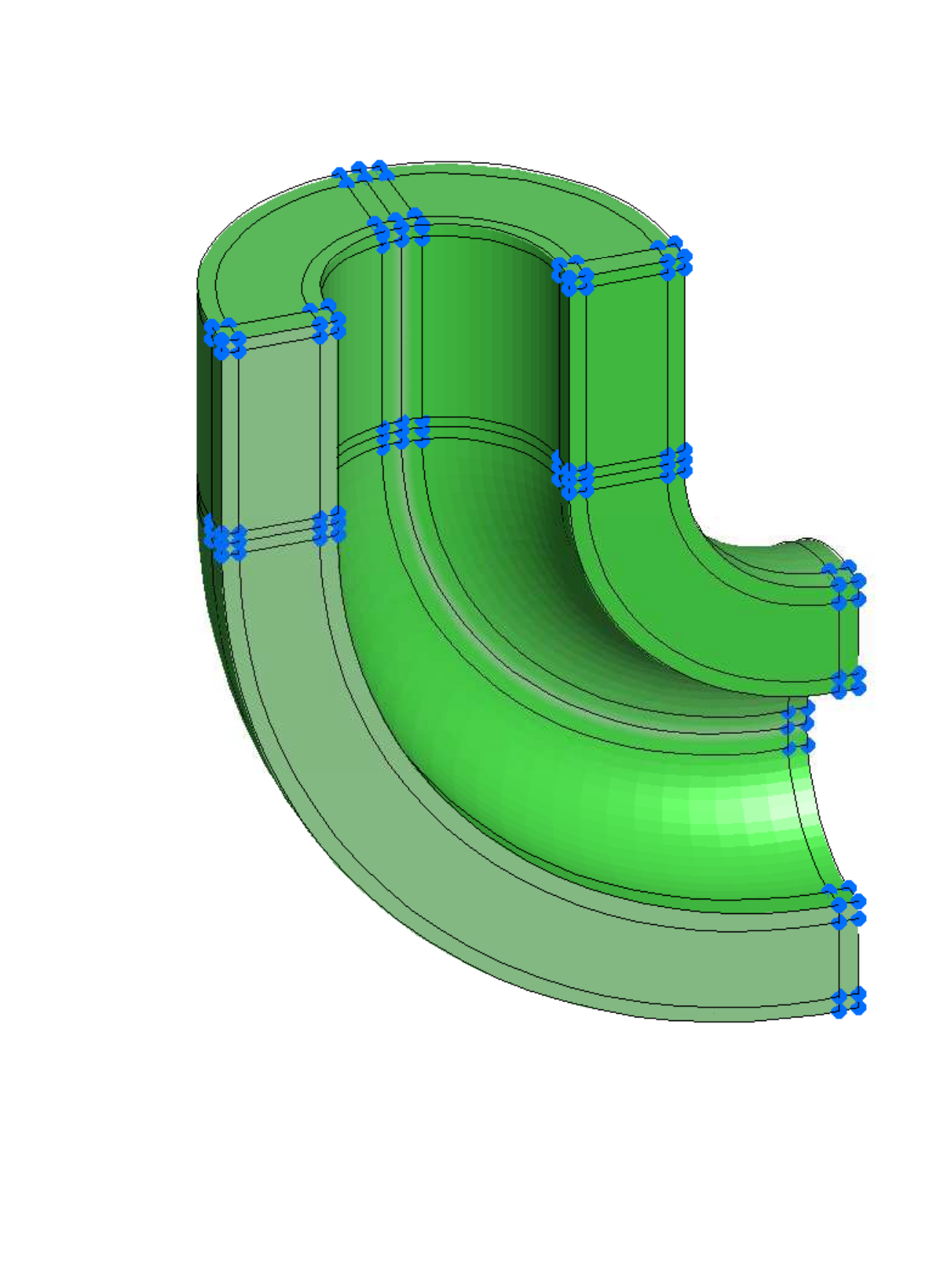}
\vspace*{-1cm}
\caption{We show the mesh $\pM$ and the image of the anchors related to the space $\Xzeroh$ on an example geometry.} \label{fig:pipe_ancore}
\end{figure}

\item since \eqref{eq:diag-discrete}, the
matrices associated with the differential operators $\grad$, $\curl$
and $\div$ on $\Omega$ are the same as the matrices of  $\hat \grad$, $\hat\curl$
and $\hat\div$ on $\Omegaref$, that is, incidence matrices of
the mesh $\pM$.
\item when $p$ is odd (even, respectively), the complex ($\Xzeroh,\ldots,\Xthreeh$) is isomorphic to the cochain (chain, respectively) complex associated to the partition $\pM$. 
\end{itemize}
 
 Finally, the discrete spaces $\Xzeroh, \ldots ,
\Xthreeh$ inherit from their pull-back $\Szeroh, \ldots ,
\Sthreeh $ optimal approximation properties, if the geometrical
mapping $\bF$ satisfies Assumption~\ref{ass:F} and its inverse is smooth enough (see \cite{BRSV11} for details).

\subsection{Control fields and  degrees-of-freedom interpretation}

\label{sec:dofs}
\B   In this section, we introduce the concept of \emph{control fields},
thanks to which we give an interpretation of the degrees-of-freedom of
the isogeometric fields defined in Section \ref{sec:compatible
  -splines-parametric-domain}--\ref{sec:compatible
  -splines-phys-domain}. The control fields are for the B-spline isogeometric
fields what  the control mesh is for the the B-spline geometry. \B 
 We recall that from the geometry control points  we define $\bF_C$ (see
\eqref{eq:Fh-Bspline}),  the piecewise trilinear
function on the  Greville
mesh $\M_G$. The  image of $\bF_C$  is the so-called
control domain $\Omega_C$. The
so-called control mesh  $\pM_C$ (which is a partition of
$\Omega_C$) is the image through $\bF_C$ of the Greville
mesh $\M_G$. 

As described in Section \ref{sec:spline-geometries},  the standard
way to manipulate a spline parametrization  $\bF$ is by moving its control
points, that is,  the vertices of the control mesh $\pM_C$. The
parametrization $\bF_C$ or, equivalently, the control mesh  $\pM_C$, carries the {degrees-of-freedom} for the
geometry.  The distance between the two parametrizations $\bF$ and
$\bF_C$  is at most $O(h^2)$, as in \eqref{eq:-control-mesh-convergence}.
\B  We now apply the same rationale  for the complex \B  $(\Xzeroh,..., \Xthreeh \B)$.
Let us first focus on scalar functions on the parametric domain $\hat
\Omega$, 
i.e., on the space $\Szero_h$. Given a spline
\begin{equation}\label{eq:zero-form-field} 
  \hat \phi(\bzeta) = \sum_{A \in \A_{p,p ,p}} c^A
 B^A_{p,p,p}(\bzeta), \qquad \text{with } \bzeta \in \hat \Omega,
\end{equation}
where $c^A$ are its control variables, we associate the
piecewise trilinear function defined on the mesh $\M_G$:
\begin{equation}\label{eq:trilinear-zero-form-field} 
  \hat \phi_C(\bzeta) = \sum_{A \in \A_{p,p ,p}} c^A
  \lambda^A(\bzeta), \qquad \text{with } \bzeta \in \hat \Omega,
\end{equation}
which  carries the same degrees-of-freedom for $\hat \phi$ and indeed is close to $\hat \phi$ (the distance between the two functions
is at most $O(h^2)$,  analogously to  \eqref{eq:-control-polygon-convergence}).  By this relation,
we can interpret the degrees-of-freedom \B  $c^A$ of  $\hat \phi$ as \B  the values
of $ \hat \phi_C $ at each Greville site in $\M_G$. 

It should be noted that, if the values of  these degrees-of-freedom are chosen wisely, 
splines deliver approximation error of order  $O(h^{p})$ in
the norm of $H^1(\hat \Omega)$, where $p$ is the degree of the spline,
while the corresponding trilinear function can only provide
approximation errors of order $O(h)$. \B

\smallskip
Let  now the geometry come into play. Using \eqref{eq:push-forwarded-discrete-diff-forms}, we set:
   \begin{equation}
\label{eq:anna1}
\phi \circ \bF = \hat \phi  \text{ and } \phi_C \circ \bF_C = \hat \phi_C. 
\end{equation}
 The degrees-of-freedom for $\phi$ are \B   the values 
of $ \phi_C $ at the vertices of $\pM_C$, that is, at the control
points. Or, we can say that  the field  $\phi_C$ determines, or
\emph{controls}, $\phi$,  and its degrees-of-freedom are the values of
$\phi_C$ at control points. \B In
 Figure \ref{fig:pipe-ctrl-net}, the location of control points (blue
 bullet) is shown on an example geometry.  The
 field $\phi_C$  plays the role of \emph{control field} for $\phi$.
As for the parametric space, there are wise choices of the degrees-of-freedom
which ensure an approximation error of order  $O(h^{p})$ in
the norm of $H^1(\Omega)$, while the corresponding trilinear function can only provide
approximation errors of order $O(h)$.

The same reasoning can be applied to the whole complex
$(\Xzeroh,..., \Xthreeh \B)$,  defined in  Section
\ref{sec:compatible -splines-parametric-domain} and
\ref{sec:compatible -splines-phys-domain},  from degrees  $ p_\ell$ and
knot vectors $ \Xi_\ell$.  \B  Indeed, we  introduce the
\emph{control complex} $(\Zzeroh,..., \Zthreeh \B)$ which is
obtained, still following Section \ref{sec:compatible -splines-parametric-domain} and
\ref{sec:compatible -splines-phys-domain},  with  the following
choices for \B  $\hat \Zzeroh $: \B 

\begin{itemize}
\item degrees in all directions equal to $1$;
\item the knot vector in the $\ell$ direction is the ordered
  collection of points $\{ \gamma_\ell^A  \ :\ A\in \A_{p_\ell}
  (\Xi_\ell) \}$, $\ell=1,2,3$,  along with repeated 0 and 1 to make the knot vectors open,
\end{itemize}
and {\Rd replacing the geometric mapping $\bF$ with $\bF_C$ in the
pullbacks} \eqref{def:iota3}. 
The complex $(\Zzeroh,...,  \Zthreeh \B)$ corresponds to the low
order finite element complex defined on the control mesh $\pM_C$ and
it is immediate to see that if $\phi$ in \eqref{eq:anna1} belongs to
$\Xzeroh$, then the corresponding $\phi_C$ belongs to $\Zzeroh$.
We denote by $I^0_h: \Xzeroh \to \Zzeroh$ the operator which
associates  $\phi$ to $\phi_C$, \B and in an analogous  way,  we
define the operators  $I^j_h : X^j_h \to Z^j_h$,
$j=0,\ldots,3$. These operators \B are represented by  identity matrices when
the spaces are endowed with the bases described in Section
\ref{sec:compatible -splines-parametric-domain}.  
It is not difficult to see that, in view of the structure of the matrices associated to differential operators,  the following diagram commutes: 
\begin{equation}
\label{eq:cd-XZ}
\begin{CD}
\mathbb{R} @>>> \Xzeroh @> \grad>> \Xoneh @>\curl>> \Xtwoh  @>\div>> \Xthreeh @>>> 0 \\
@. @V{I^0_h}VV @V{I^1_h}VV @V{I^2_h}VV @V{I^3_h}VV \\
\mathbb{R} @>>> \Zzeroh @>\grad>> \Zoneh @>\curl>> \Ztwoh @>\div>> \Zthreeh @>>> 0. \\
\end{CD}
\end{equation}

 Let us comment about the meaning of the diagram
 \eqref{eq:cd-XZ}. First of all,  it says that the geometric structure
 of the spline complex $(\Xzeroh,..., \Xthreeh \B)$  is the one of
 the  low order finite element complex $(\Zzeroh,..., \Zthreeh \B)$
 on the control mesh.  The discrete fields in  $X^j_h$ can be
 associated to \emph{control fields} in  $Z^j_h$, through the operator
 $I^j_h$, as we have discussed for $j=0$ above. \B  For example, we can
 say that there is a N\'ed\'elec field $\bu_C$ which controls $\bu$
 and the degrees-of-freedom are, in this case,  its circulation on the
 edges of the control mesh $\pM_C$.   Moreover,
 following a reasoning similar to the one in Section
 \ref{sec:spline-geometries},  the operators $I^j_h$ are point-wise
 converging to the identity when $h$ goes to zero.
 We stress again that the order of approximation of the complex 
$(\Xzeroh,..., \Xthreeh \B)$ is $O(h^p)$ while the control complex
$(\Zzeroh,..., \Zthreeh \B)$ only exhibits  first order convergence {\Rd in the norm of $X^i$}.

\begin{figure}[h!]
\centering
\includegraphics[width=.4\textwidth]{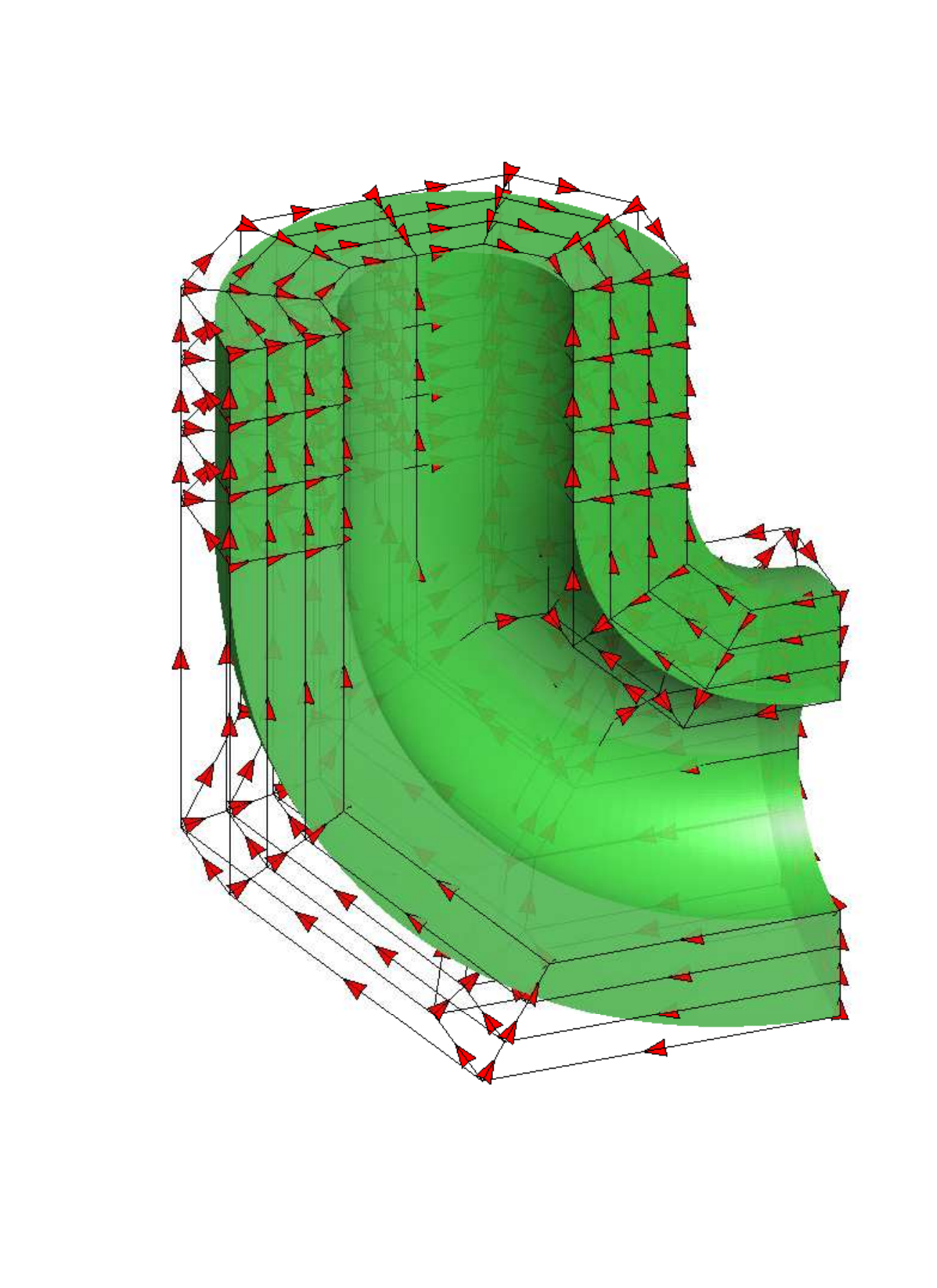}
\vspace*{-1cm}
\caption{Representation of the degrees-of-freedom location for the space $\Xoneh$, on the green geometry for degrees $p_1=p_2=p_3=3$.} \label{fig:pipe-ned}
\end{figure}

 Finally, it should be noted that, as it is well known,  the complex $(\Zzeroh,..,\Zthreeh)$ is always isomorphic to the cochain complex of  the partition $\pM_C$, while for the complex  $(\Xzeroh,..,\Xthreeh)$ Proposition \ref{p:cochain-and-chain} holds. This is in accordance with the fact that, when $p_\ell$ are all even, the control mesh $\pM_C$ can be interpreted as a partition dual to $\pM$, in the sense that the chain of $\pM$ is isomorphic to the cochain complex of  $\pM_C$.

\subsection{Push-forward to the multi-patch  physical  domain
  $\Omega$}
\label{sec:multi-patch-complex-continuity} 
\Bd In this section we construct the spline complex on a multi-patch geometry by addressing the questions of 
how conformity is imposed at the interfaces between patches.\B

We consider now a multi-patch domain $\Omega$ which is parametrized
from a reference patch $\hat \Omega$ through the 
spline or NURBS mappings $\bF_k$, $k=1,\ldots,N$, as in
\eqref{eq:F-multi-patch}.  Each patch is endowed with a (possibly
different) spline space and therefore for each  $k=1,\ldots,N$ we
can define discrete spaces $[\Szeroh]_k, \ldots, [\Sthreeh]_k$ such
that a De Rham complex \eqref{eq:3D-parametric-diagram} holds.
Assuming each  $\bF_k$ verifies Assumption \ref{ass:F}, then, as shown in
Section \ref{sec:compatible -splines-phys-domain} we  push-forward
patch-by-patch the discrete spaces $[\Szeroh]_k, \ldots, [\Sthreeh]_k$
and obtain, on each $ \Omega_k = \bF_k (\hat \Omega) $, the discrete
spaces $[\Xzeroh]_k, \ldots, [\Xthreeh]_k$ that fulfill the De Rham
complex \eqref{sec:compatible -splines-phys-domain} on each patch. 

Then, the last and main step is
to assemble the spaces $X_h^j  \subset \bigoplus_{k=1,\ldots,N}
[X_h^j]_k$, and add the relevant continuity conditions  at the
inter-patches boundaries: trace continuity for $\Xzeroh$, tangential  trace
continuity for $\Xoneh$, normal trace continuity for $\Xtwoh$, no continuity for $\Xthreeh$.
For this purpose, we introduce a \emph{conformity} condition as,
e.g., in \cite{kleissieti}. This condition guarantees that the
geometry parametrizations of the patches are equivalent at the
patch interfaces and, since Assumption \ref{ass:F}, it can be stated on
the spaces $[\Szeroh]_k$. 

\begin{assumption}[Geometrical conformity]
  \label{ass:IETI}
On each non-empty patch interface $\Gamma= \partial \Omega_{k}
\cap \partial \Omega_{k'} $, with $k \neq k'$,  the spaces
$ [\Xzeroh]_{k} | _{\Gamma}$ and $ [\Xzeroh]_{k'} | _{\Gamma}$
coincide, as the corresponding bases do.
\end{assumption}

This means that the meshes $\pM_k$ and
$\pM_{k'}$ match on $\Gamma$, and therefore \B  
\begin{displaymath}
  \pM = \bigcup_{k=1,\ldots,N} \pM_k
\end{displaymath} \B 
is  a locally structured but globally unstructured mesh $\pM$  on
$\Omega$. In a similar way, the patch control meshes $[\pM_C]_k$ match
conformally and \B 
\begin{displaymath}
  \pM_C = \bigcup_{k=1,\ldots,N} [\pM_C]_k
\end{displaymath} \B
is a locally structured but globally unstructured mesh $\pM_C$ of hexahedra
on $\Omega_C$, the union of the patch control domains
$[\Omega_C]_k$.   

Assumption \ref{ass:IETI}  corresponds to the \emph{full-matching}
conditions of  \cite{kleissieti}, to which we refer for further
details. 

Having conformity we can implement the continuity conditions
easily. Indeed, due to the definitions in Sections~\ref{sec:compatible
  -splines-parametric-domain}--\ref{sec:dofs}, the needed continuity  holds
for $(\Xzeroh,..., \Xthreeh)$ if and only if it holds for $(\Zzeroh,..., \Zthreeh
)$ on the global mesh $\pM_C$. Continuity for $\Zzeroh, \Zoneh$ and
$\Ztwoh$ is imposed by merging the coincident degrees-of-freedom at
the interfaces, which in the case of $\Zoneh$ and $\Ztwoh$ also
requires to take into account the orientation; see Figure~\ref{multi-patch}. This is however the
same as in finite elements (indeed, the control fields are classical
finite elements). This merging automatically gives the degrees-of-freedom for fields in
$(\Xzeroh, ..., \Xthreeh)$.

\begin{figure}[h]
\begin{subfigure}[Control variables of the two patches before merging.]
{\includegraphics[width=0.48\textwidth]{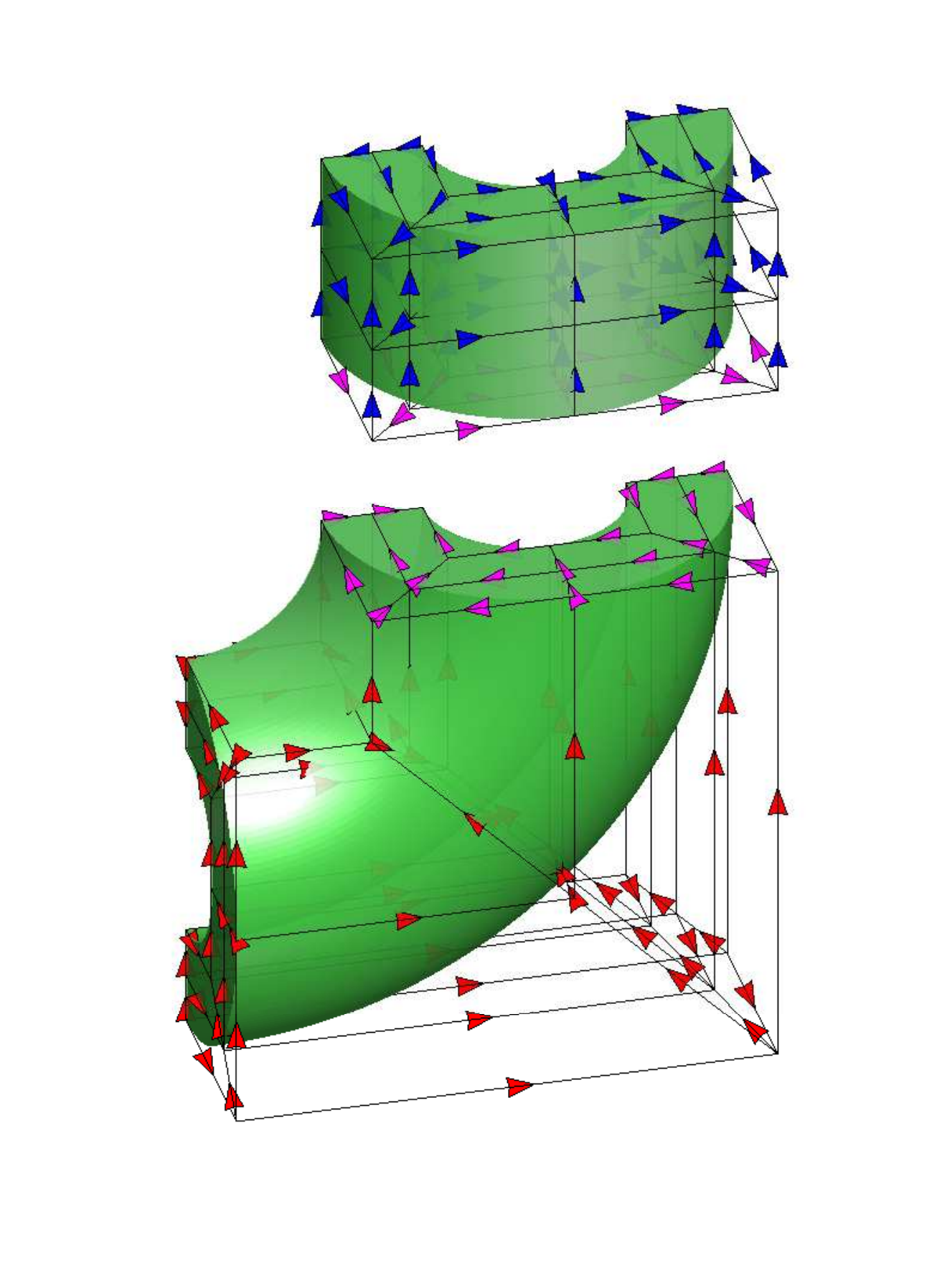}} \label{fig:multipatch1}
\end{subfigure}
\begin{subfigure}[Control variables of the merged patch.]
{\includegraphics[width=0.41\textwidth]{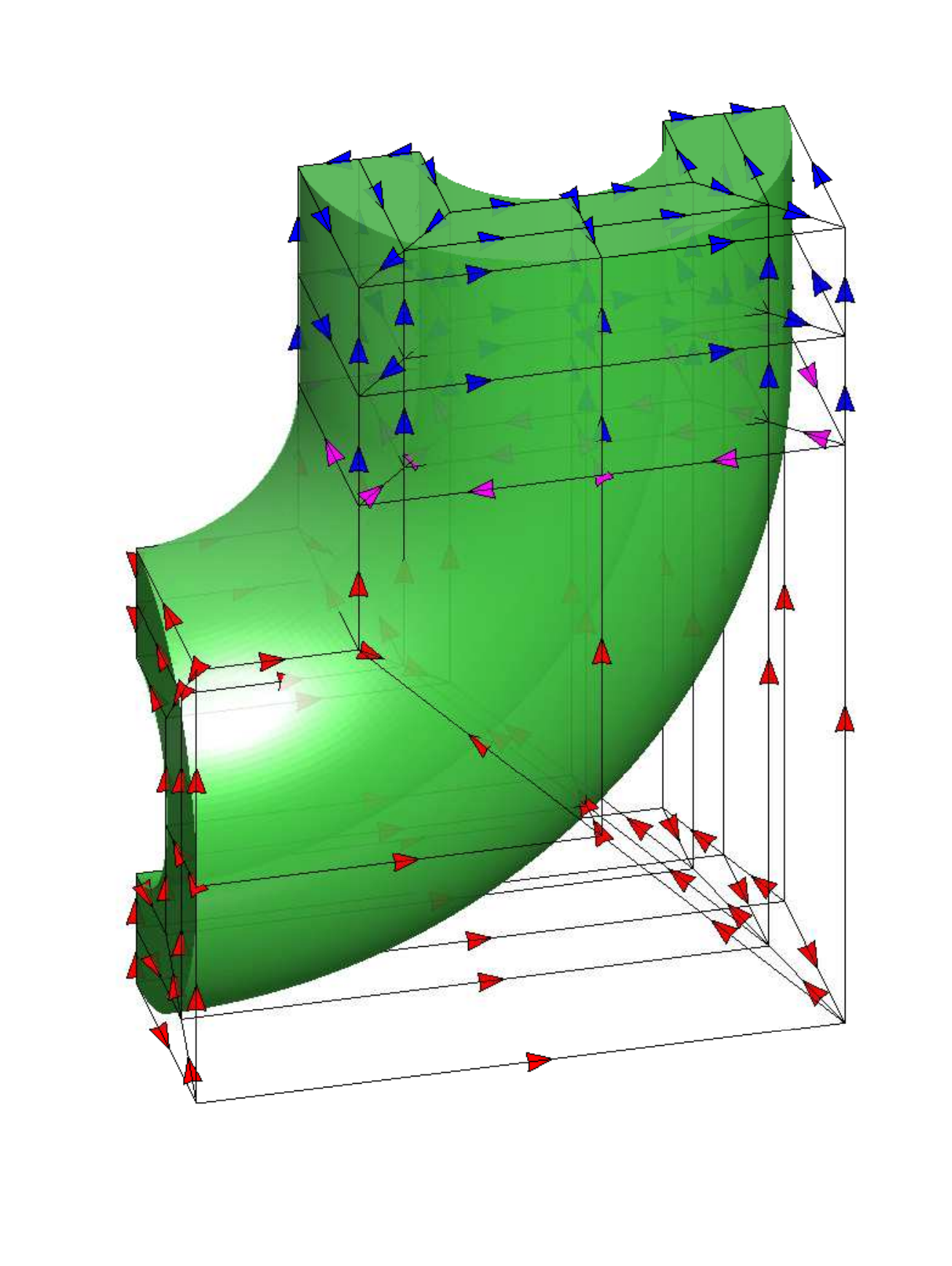}} \label{fig:multipatch2}
\end{subfigure}
\caption{Implementing continuity for $ \Xoneh$ on a two-patch domain. The orientation of the edges at the interface is chosen as that of the {\Rd lower} patch.}
\label{multi-patch}
\end{figure}

\section{Beyond the tensor product structure: T-splines}
\label{sec:TT}

In this section, we generalize the definition of tensor-product
B-splines to  T-splines
\cite{Sederberg_Zheng,Sederberg_Cardon,Bazilervs_Calo_Cottrell_Evans}. The
theory of T-splines is well developed in two dimensions (see the very
recent papers \cite{LZSHS12, BBCS12, BBSV12, Li_Scott})
while it is still incomplete in three dimensions (though some recent
important advances have been recently proposed  in \cite{Zhang2012}). For this reason,  we only present in Sections
\ref{sect:T-mesh} and~\ref{sect:AS}, T-splines in two
dimensions and construct, in Section \ref{sect:T-compatible-parametric}, a
discrete T-spline based complex. The extension to three
dimensions is given in Section~\ref{sect:3D-T-splines} by tensor-product of the
two-dimensional T-spline spaces with B-spline one-dimensional spaces.
\B
\subsection{T-mesh}\label{sect:T-mesh}
Let $n_\ell \in \N$ and the degree $p_\ell \in \N$, and let $\Xi_\ell
= \{\xi_{\ell,1}, \ldots, \xi_{\ell,n_\ell+p_\ell+1}\}$ be a
$p_\ell$-open knot vector for $\ell = 1,2$. A  T-mesh is  a
rectangular tiling of the unit square $[0,1]^2$, such that all
\emph{vertices}  belong to $\Xi_1 \times \Xi_2$. A T-mesh may contain
 interior
vertices that connect only three edges,  called \emph{T-junctions},
that break the tensor product structure of the mesh (see
Figure~\ref{fig:Tmesh-p3}).  We will say that a T-junction is horizontal (respectively, vertical) if 
the missing edge is horizontal (resp. vertical). By an abuse of notation,
we still denote a T-mesh by $\M$. 

As before,  we represent the  knot multiplicities by repeated lines
close to each other, with now  the line multiplicity possibly
varying along lines  (see
\cite[Section~4.3]{Bazilervs_Calo_Cottrell_Evans}). The only exception are the boundary lines, that maintain the same multiplicity all along the line. As in B-spline
meshes, the vertical (resp. horizontal) lines at the boundaries have
multiplicity $\lfloor p_1/2 \rfloor + 1$ (resp. $\lfloor p_2/2 \rfloor + 1$).

\subsection{Analysis suitable T-meshes and  T-splines.}
\label{sect:AS}
We define, for a horizontal (resp. vertical) T-junction $T$, the
$k$-bay face-extension as the horizontal (resp. vertical) closed
segment that  extends from $T$  in the direction of the missing
edge until it intersects $k$ lines of the mesh $\M$. The $k$-bay
edge-extension is defined analogously extending the segment in the
opposite direction.

Following \cite{BBSV12}, we define the extension of a horizontal
(resp. vertical) T-junction $T$  the union of the
$\lfloor(p_1+1)/2\rfloor$-bay face-extension, and the
$\lfloor(p_1-1)/2\rfloor$-bay edge-extension (resp.  the union of the $\lfloor(p_2+1)/2\rfloor$-bay face-extension, and the $\lfloor(p_2-1)/2\rfloor$-bay edge-extension). More precisely, if $p_\ell$ is odd we extend $(p_\ell+1)/2$ bays in the direction of the missing edge, and $(p_\ell-1)/2$ bays in the opposite direction; if $p_\ell$ is even we extend $p_\ell/2$ bays in both directions. An example is given in Figure~\ref{fig:extensions}.

\begin{figure}[h!]
\centering
\includegraphics[width=0.4\linewidth]{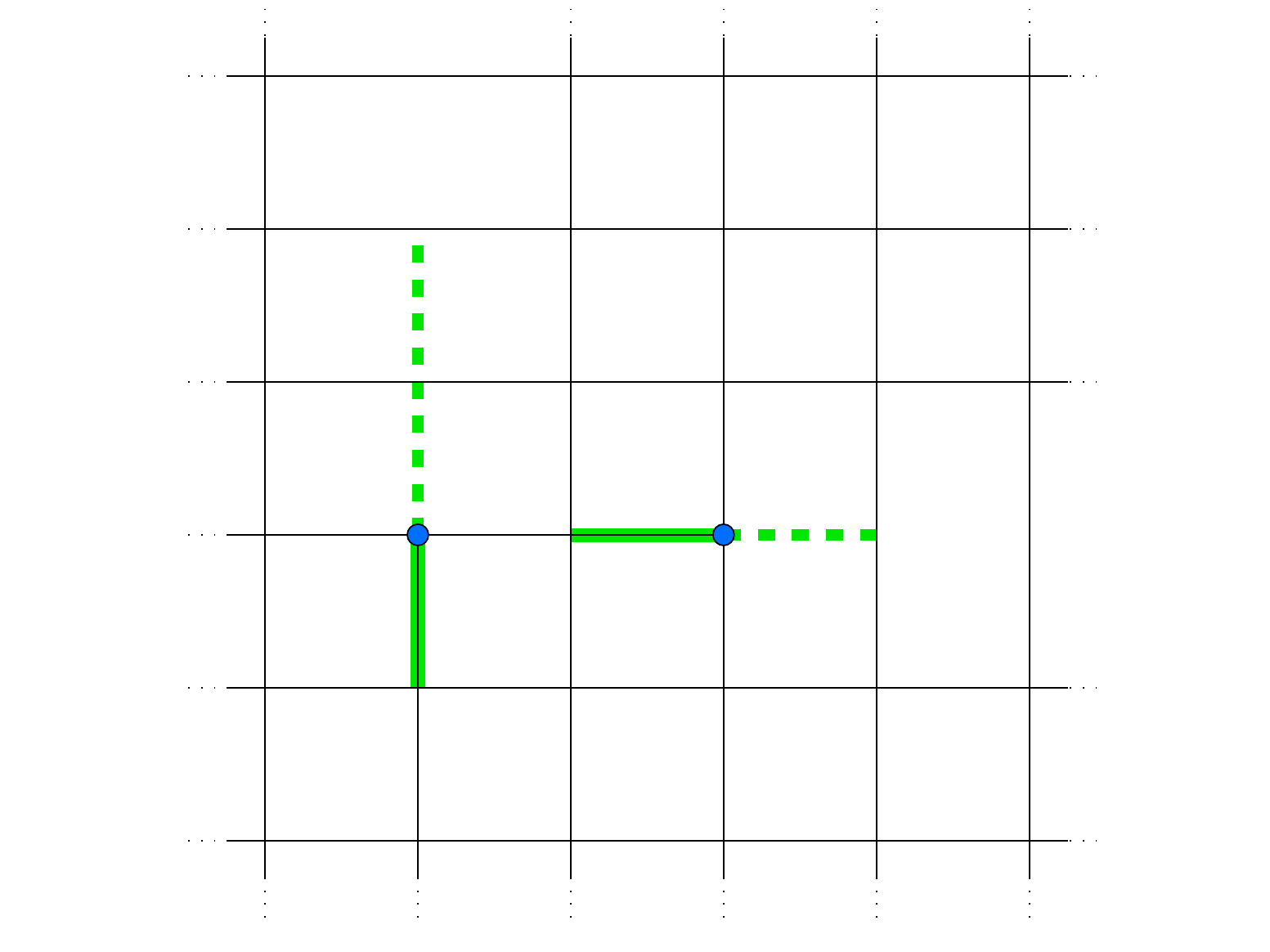}
\caption{Extensions for degree $p_1 = 2$ (horizontal) and $p_2 = 3$ (vertical). Dashed lines represent the face extensions.} \label{fig:extensions}
 \end{figure}

\begin{definition}\label{def:analysis-suitable}
A T-mesh $\M$ is analysis suitable for degrees $p_1$ and $p_2$,
denoted  $\M \in \AS_{p_1,p_2}$, if vertical extensions and horizontal extensions do not intersect.
\end{definition}

Analysis suitable T-meshes were first identified in \cite{LZSHS12} in
the bi-cubic case, and generalized to arbitrary degree in
\cite{BBSV12}.  Despite their very geometric definition, analysis suitable T-meshes and T-splines enjoy fundamental properties  which make their use in isogeometric analysis really promising. Some of these properties will be discussed in what follows.

As in the case of B-splines, 
anchors are inferred from the T-mesh $\M$ and their position depends upon the parity of $p_1$ and $p_2$. 
For example, if both $p_1$ and $p_2$ are odd, anchors are at the vertices of the $\M$, if they are even, anchors are at the barycenters of elements and so on  (see \cite{BBSV12}). We will denote the set of
anchors by $\A_{p_1,p_2}(\M)$, or simply $\A_{p_1,p_2}$. 

T-spline basis functions are constructed as B-splines associated to the anchors
$\A_{p_1,p_2}(\M)$, and defined from two \emph{local knot vectors}, $\Xi_1^A = \{\xi_{1,i_1},
\ldots, \xi_{1,i_{p_1+2}} \} \subset \Xi_1$ and $\Xi_2^A = \{\xi_{2,i_1},
\ldots, \xi_{2,i_{p_2+2}} \} \subset \Xi_2$. To construct the horizontal knot vector $\Xi_1^A$ we trace the horizontal line through $A$ and select its intersections with vertical lines of $\M$, depending on the degree $p_1$: if $p_1$ is even we choose the first $(p_1+2)/2$ intersections to the left of $A$, and the first $(p_1+2)/2$ to the right; if $p_1$ is odd we first select the coordinate of the anchor $A$, and then the first $(p_1+1)/2$ intersections to the left and to the right of $A$. In the case that we arrive at the boundary, we add the value 0 or 1 as many times as needed to complete the $p_1+2$ entries of $\Xi_1^A$. The construction of $\Xi_2^A$ is analogous, and depends on $p_2$. Examples are shown in Figure~\ref{fig:local_knot_vector} for $p_1 = 2$ and $p_2 = 3$. For more details we refer to \cite{Bazilervs_Calo_Cottrell_Evans}.


\begin{figure}
\centering
\includegraphics[scale=0.4]{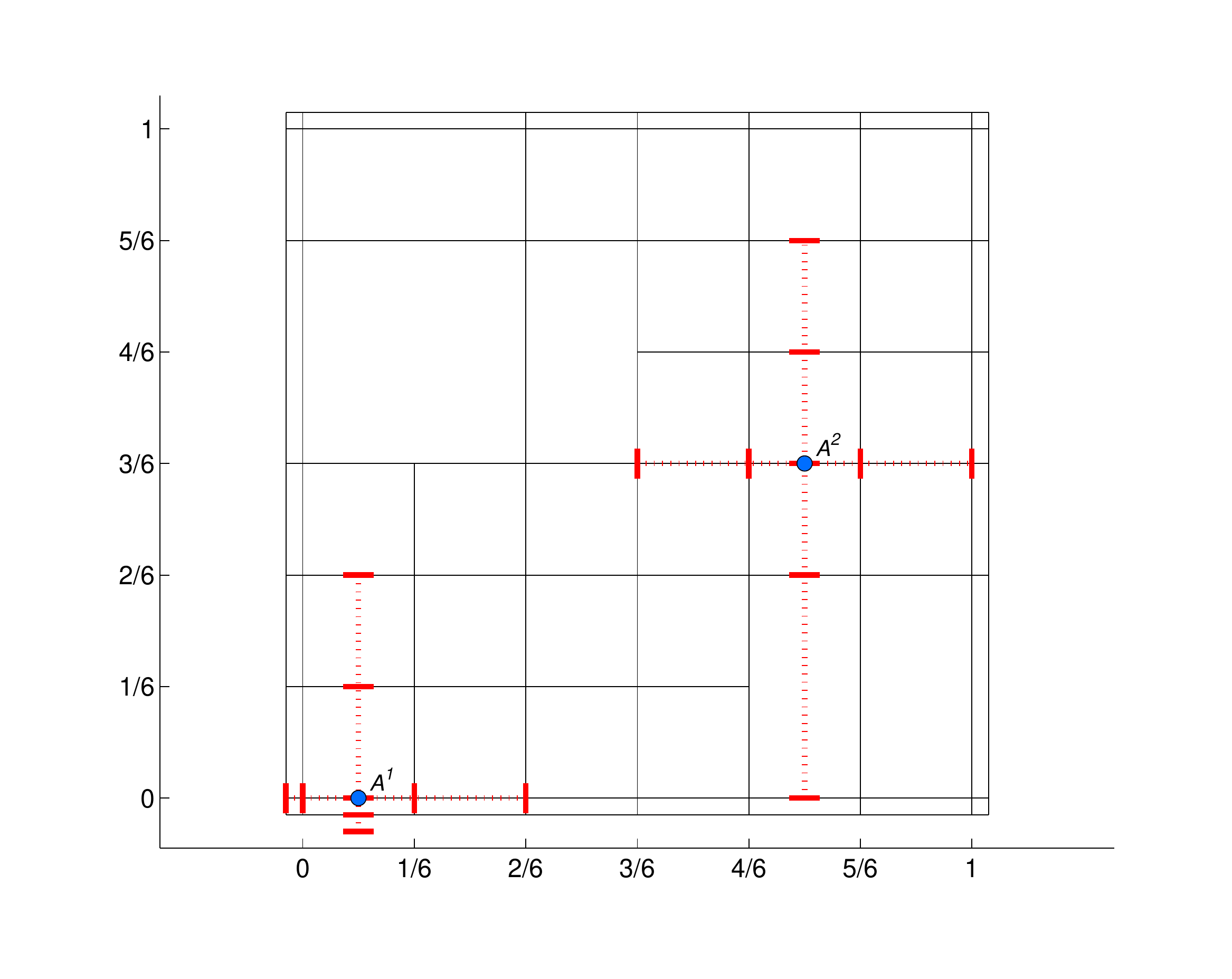}
\caption{\Bd Representation \B of the local knot vectors for degrees $p_1 = 2$ and $p_2 = 3$. The local knot vectors are $\Xi_1^{A^1} = \{0, 0, 1/6, 2/6\}$, $\Xi_2^{A^1} = \{0, 0, 0, 1/6, 2/6\}$, and $\Xi_1^{A^2} = \{3/6, 4/6, 5/6, 1\}$, $\Xi_2^{A^2} = \{0, 2/6, 3/6, 4/6, 5/6\}$.} \label{fig:local_knot_vector}
\end{figure}

The T-spline function associated to the anchor $A$ is denoted as:
\begin{equation}\label{eq:T-spline-function}
B^A_{p_1,p_2}(\bzeta) = N[\Xi_1^A] (\zeta_1) N[\Xi_2^A] (\zeta_2),
\quad \bzeta = (\zeta_1, \zeta_2) \in (0,1)^2, 
\end{equation}
they are linearly independent (see \cite{BBSV12}) and by definition span the
T-spline space $T_{p_1,p_2}= T_{p_1,p_2}(\M)$:
\begin{equation}\label{Tsplinespace}
 T_{p_1,p_2}(\M) := \mbox{span} \{B_{p_1,p_2}^{A} \ : \ A \in \A_{p_1,p_2}(\M) \} .
\end{equation}

Definition \ref{def:analysis-suitable} guarantees fundamental
properties of the T-spline space \eqref{Tsplinespace}.  In
\cite{BBCS12,BBSV12} it is defined a dual basis for
the T-spline functions constructed from an analysis suitable T-mesh,
thus proving the linear independence of \eqref{eq:T-spline-function} (see also \cite{LZSHS12}) and good approximation properties
for the  space \eqref{Tsplinespace}.  We also remark that the construction of the
local knot vectors described above is analogous to the one in 
  \cite{BBSV12}  for analysis suitable T-meshes. 

Finally, we define the \emph{extended T-mesh} of $\M$, and denote it
by $\M_\ext$, as the T-mesh obtained from $\M$ by adding all the
T-junction extensions.
 The extended T-mesh, sometimes also called B\'ezier mesh, is the
 minimal mesh such that the functions \eqref{eq:T-spline-function} restricted to the
 non-empty elements are bivariate polynomials of degree
 $(p_1,p_2)$. The importance of the extended mesh for implementation
 \cite{Scott2011126}, local refinement \cite{Li_Scott} and approximation
 properties of T-splines \cite{BBSV12} is already known.  In particular,  for the implementation, 
and in order to ensure accuracy, integration has to be performed on the elements of $\M_\ext$ and 
this means that the data structure is constructed based on $\M_\ext$. 
 
Finally, a key result which is useful in the construction of compatible T-spline discretizations,  is the characterization stated in the following
proposition.
\begin{proposition}\label{prop:characterization}
  Given an analysis suitable T-mesh $\M$, if furthermore no T-junction
  extensions of any kind intersect each other or intersect mesh
  lines with multiplicity greater than one,  then the T-spline  space \eqref{Tsplinespace}
  is the space of all piece-wise bivariate polynomials of degree
 $(p_1,p_2)$ on   $\M_\ext$ with the same continuity of the T-spline
 functions \eqref{eq:T-spline-function} at the mesh lines. 
\end{proposition}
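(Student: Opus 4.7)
The plan is to prove the claim by the standard two-step route: an easy inclusion, followed by a dimension count. For the easy direction, observe that each T-spline basis function $B^A_{p_1,p_2}$ defined in \eqref{eq:T-spline-function} is a tensor product of two univariate B-splines whose local knot vectors $\Xi_1^A$ and $\Xi_2^A$ are subsets of the global knot vectors $\Xi_1,\Xi_2$. Hence $B^A_{p_1,p_2}$ is a piecewise polynomial of bi-degree $(p_1,p_2)$ on $\M_\ext$, and its continuity across any mesh line of $\M_\ext$ coincides with the one prescribed for the target space $P$ (since it is the one inherited from the knot multiplicities of $\Xi_1,\Xi_2$ or from a T-junction extension, which is treated by construction as a line of minimal multiplicity one). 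This gives $T_{p_1,p_2}(\M) \subseteq P$.

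For the reverse inclusion I would show that $\dim T_{p_1,p_2}(\M) = \dim P$. By the linear independence of the T-spline basis on analysis-suitable T-meshes, proved in \cite{BBSV12,LZSHS12}, we have $\dim T_{p_1,p_2}(\M) = |\A_{p_1,p_2}(\M)|$. The remaining task is therefore to compute $\dim P$ and match it with the number of anchors. I would proceed by induction on the number of T-junctions of $\M$, removing them one at a time while keeping the strong hypothesis (which is clearly inherited by a sub-T-mesh). In the base case, $\M$ is a tensor-product mesh, $\M_\ext=\M$, and $P$ is the usual tensor-product spline space of dimension $n_1 n_2$, which is exactly the number of anchors. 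For the inductive step, consider removing a single T-junction $T$ and its extension $E_T$: the non-interaction hypothesis guarantees that $E_T$ is an isolated open segment of $\M_\ext$ not crossing any other extension and not touching any mesh line of multiplicity greater than one. This localization lets one equate the change in $\dim P$ with a one-dimensional count on $E_T$, and a parallel count shows that $|\A_{p_1,p_2}(\M)|$ changes by exactly the same amount.

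The main obstacle is the combinatorial bookkeeping in the inductive step, where one must verify that the local change in $\dim P$ caused by adding the extension $E_T$ (through the change of continuity along the segment and the subdivision of the elements it crosses) matches exactly the local change in the number of anchors. The strong hypothesis is crucial here: if two extensions crossed, or an extension met a line of multiplicity $>1$, then the change in $\dim P$ would no longer decouple into an edge-by-edge count, and typically $\dim P$ would strictly exceed $|\A_{p_1,p_2}(\M)|$ (which is the well-known failure mode of non-analysis-suitable T-splines). An alternative to the induction would be to invoke a Mourrain-type homological formula for piecewise polynomials on T-meshes applied to $\M_\ext$; the hypothesis forces all higher homological terms to vanish, leaving a closed-form expression that one matches against the explicit enumeration of anchors in $\A_{p_1,p_2}(\M)$. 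Either route concludes $\dim T_{p_1,p_2}(\M)=\dim P$, and combined with the trivial inclusion this gives $T_{p_1,p_2}(\M)=P$.
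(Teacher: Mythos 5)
Your easy inclusion $T_{p_1,p_2}(\M)\subseteq P$ is fine: by the defining property of the extended T-mesh, each blending function \eqref{eq:T-spline-function} is a piecewise polynomial of bi-degree $(p_1,p_2)$ on $\M_\ext$, and the smoothness constraint defining $P$ is by construction the one the blending functions satisfy. Likewise, $\dim T_{p_1,p_2}(\M)=|\A_{p_1,p_2}(\M)|$ follows from the dual-basis/linear-independence results of \cite{BBSV12,LZSHS12}. But the reverse inclusion, i.e.\ the identity $\dim P=|\A_{p_1,p_2}(\M)|$, is the entire mathematical content of the proposition, and at that point your argument is a plan rather than a proof. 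Your inductive step --- remove one T-junction, localize the change of $\dim P$ to the extension segment $E_T$, and match it with the change in the anchor count --- is precisely the missing lemma, and you explicitly defer it as ``combinatorial bookkeeping.'' That bookkeeping is not routine: removing (or completing) a T-junction modifies the mesh non-locally, since it changes $\M_\ext$, the local knot vectors of nearby anchors, and possibly the analysis-suitability of the intermediate meshes, so it is not even clear that the induction stays inside the class covered by the hypothesis; and the dimension of a smooth piecewise-polynomial space on a T-mesh does not in general decouple into edge-by-edge contributions --- it can even be unstable, depending on the geometry and not only on the combinatorics of the mesh, as shown in \cite{LiCh11}. Your alternative route via a Mourrain-type homological formula \cite{Mourr10} has the same status: the assertion that the hypothesis ``forces all higher homological terms to vanish'' is exactly the hard statement one would have to prove, not a consequence one can simply invoke.

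It is worth knowing that the paper itself does not prove this proposition either: its ``proof'' is a pointer to the literature, namely \cite{Li_Scott} for the bicubic case $p_1=p_2=3$ and the then-unpublished work \cite{bressan-in-preparation} for general degree, with \cite{Mourr10} and \cite{LiCh11} cited precisely to signal the difficulty of the dimension question. So your outline is a reasonable reconstruction of how such a proof is expected to go, but as written it has a genuine gap exactly where the known proofs require substantial work, and it cannot be accepted as a proof of the statement.
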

\begin{proof}
  The case $p_1=p_2=3$ has been covered
  in \cite{Li_Scott}, while the
  general case is a work in progress by A. Bressan in
  \cite{bressan-in-preparation}. 
Related works are also \cite{Mourr10}, and \cite{LiCh11} which show the 
mathematical complexity of the problem.
 The condition that the extensions do not intersect lines with
multiplicity greater than one can be removed at the price of a more complex
statement, which we do not consider here for the sake of simplicity. \B
\end{proof}

\subsection{Two-dimensional De Rham complex with T-splines  on the parametric domain $(0,1)^2$} \label{sect:T-compatible-parametric}
The aim of this section is to introduce a two-dimensional  T-spline
based De Rham complex, thus generalizing the tensor-product
construction  of  \Bd section~\ref{sec:spline-complex}. \B Throughout this
Section we will assume, for the sake of simplicity, $p_1 = p_2 =
p$. The results are also valid in the general case, but the proofs
become more intricate.

As for B-splines, T-splines spaces are
constructed by a suitable selection  of the polinomial  degree in the two
directions and by a suitable design of the mesh, that is, the knot vectors. The main
difference now is that we need to modify the mesh $\M$, depending of the
form degree, not only at the boundary but also around T-junctions.

Let $p \in \N$, let $\Xi_1,\Xi_2$ be two $p$-open knot
vectors, and let $\M \in \AS_{p,p}$ be a T-mesh with knot repetitions,
as defined in Section~\ref{sect:T-mesh}. 
The starting mesh is $\Mzero \equiv \M$, on which we define the space of scalar fields:
\begin{equation}
    \label{eq:T-splines-0-forms-on-parametric}
 \hat Y^0_h := T_{p,p}(\Mzero).
\end{equation}

 The \B  T-splines  vector fields \B  are defined on the two T-meshes $\Moneh$ and $\Monev$. 
 If $p$ is odd, $\Moneh$ is obtained from $\M$ by adding the first-bay
  face-extension of all  horizontal T-junctions. If $p$ is even,
 $\Moneh$  is  equal to $\M$ everywhere but on the boundary where, due to the definition of $\M$, and recalling Section~\ref{sect:knot_vector},  the first and the last
 vertical columns of elements of $\M$ are removed. We define analogously $\Monev$,
 reasoning in the vertical direction: if $p$ is odd  $\Monev$  is
 defined by adding the first-bay face-extension of all the vertical
 T-junctions, and if $p$ is even, it is defined by removing the first
 and last horizontal rows of elements of $\M$. Then the \Rd vector
 fields \B   and the {\Rd rotated} \Rd vector
 fields \B  are defined as

 \begin{equation}
    \label{eq:T-splines-1-forms-on-parametric}
 \hat Y^1_h := T_{p-1,p}(\Moneh) \times  T_{p,p-1}(\Monev).
\end{equation}

\begin{equation}
    \label{eq:T-splines-1*-forms-on-parametric}
 \hat Y^{1*}_h := T_{p,p-1}(\Monev) \times  T_{p-1,p}(\Moneh).
\end{equation}

Finally,  the \B  last space $\hat Y^{2}_h$ \B is  defined on the T-meshes $\Mtwo$: if $p$ is
odd, $\Mtwo$ is obtained from $\M$ by adding all the first-bay
face-extensions (horizontal and vertical), and if $p$ is even it is
defined by removing the first and last rows and columns of elements in
$\M$. 

Then 
\begin{equation}
    \label{eq:T-splines-2-forms-on-parametric}
 \hat Y^{2}_h := T_{p-1,p-1}(\Mtwo).
\end{equation}

An example of the sequence of meshes is shown in
Figure~\ref{fig:Tmesh-p3} for $p=3$, and in Figure~\ref{fig:Tmesh-p2}
for $p=2$. We notice that whenever $\Mzero = \M$ is a tensor product
mesh, the construction is equivalent to the one presented in
Section~\ref{sec:compatible -splines-parametric-domain} for
B-splines. Indeed, for odd $p$ the four meshes are equal to $\M$,
because there are no T-junctions, and for even $p$ they only differ in
the number of line repetitions on the boundary.

\begin{figure}[h!]
\begin{center}
\begin{subfigure}[Mesh $\Mzero$]{\includegraphics[width=.47\textwidth]{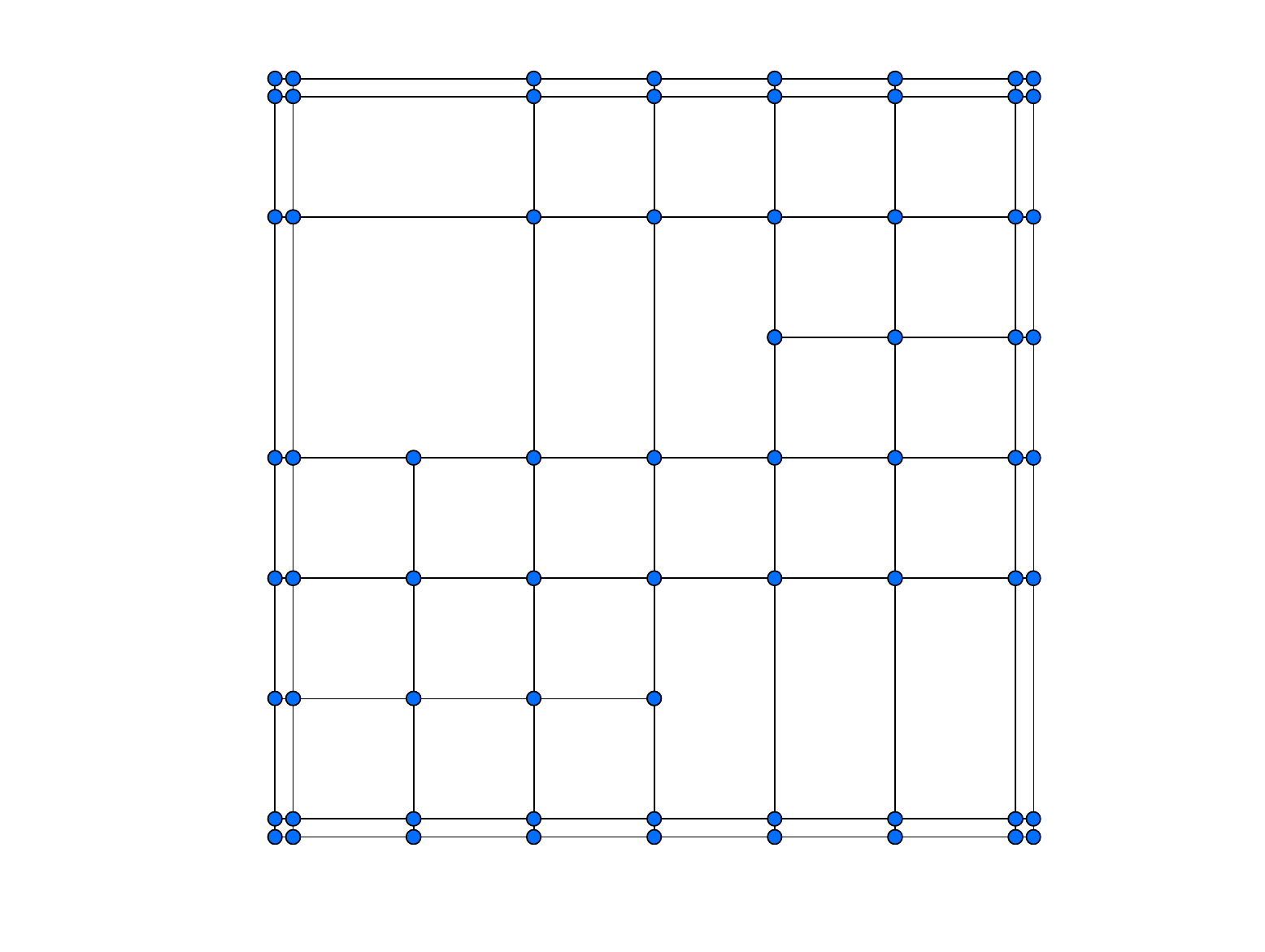}}
\end{subfigure}
\end{center}
\begin{subfigure}[Mesh $\Moneh$]{\includegraphics[width=.47\textwidth]{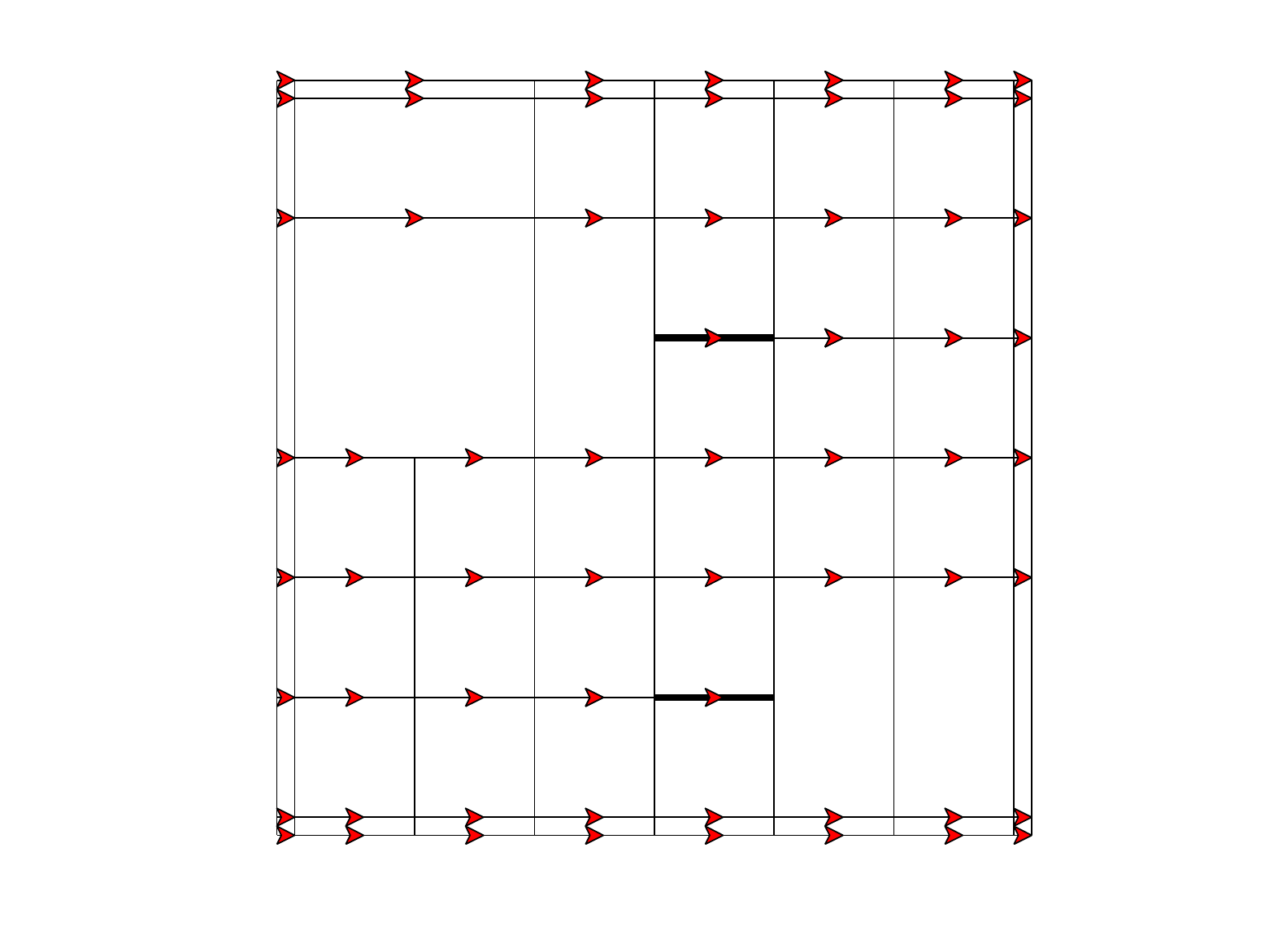}}
\end{subfigure}
\begin{subfigure}[Mesh $\Monev$]{\includegraphics[width=.47\textwidth]{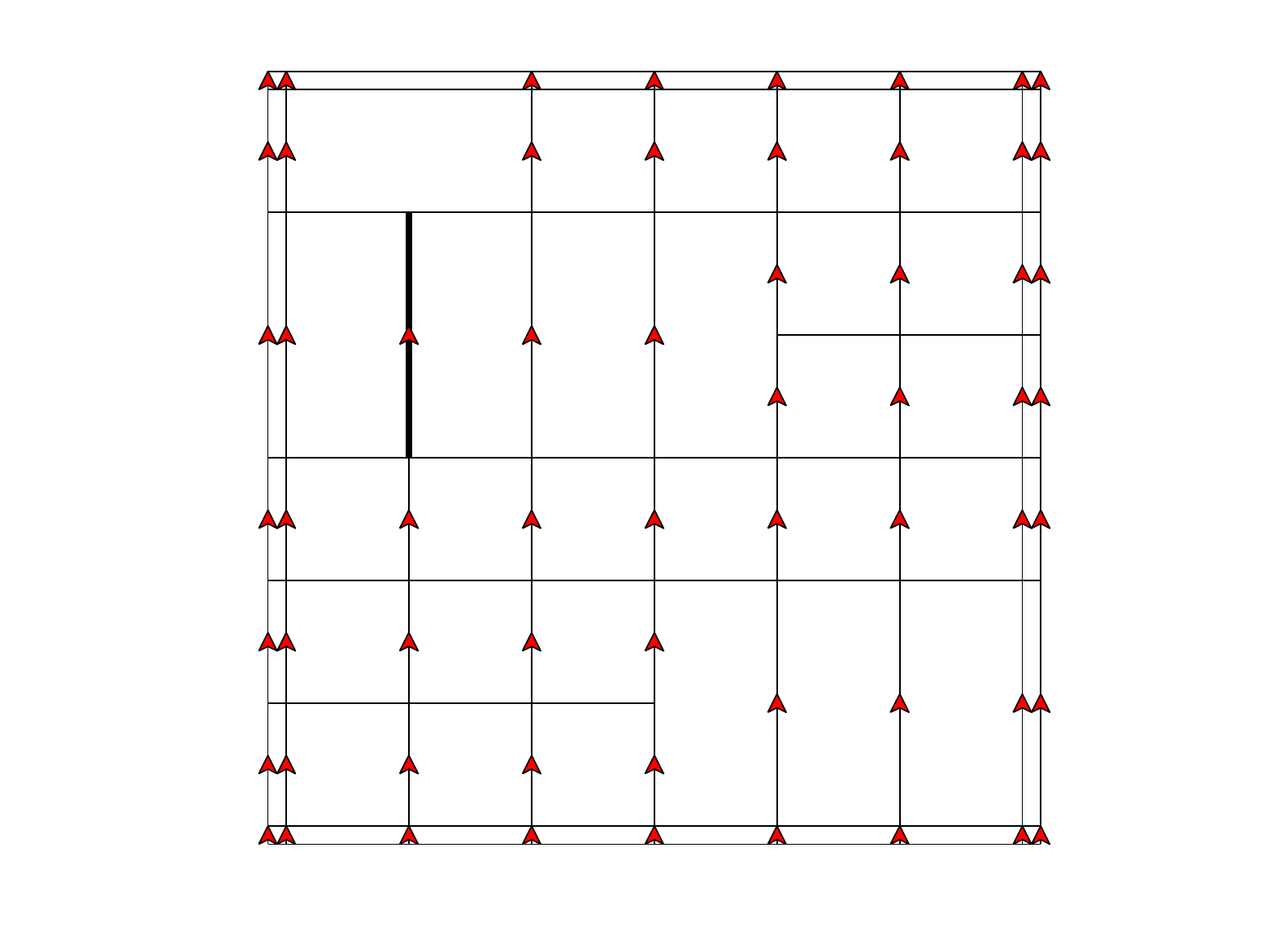}}
\end{subfigure}
\begin{center}
\begin{subfigure}[Mesh $\Mtwo$]{\includegraphics[width=.47\textwidth]{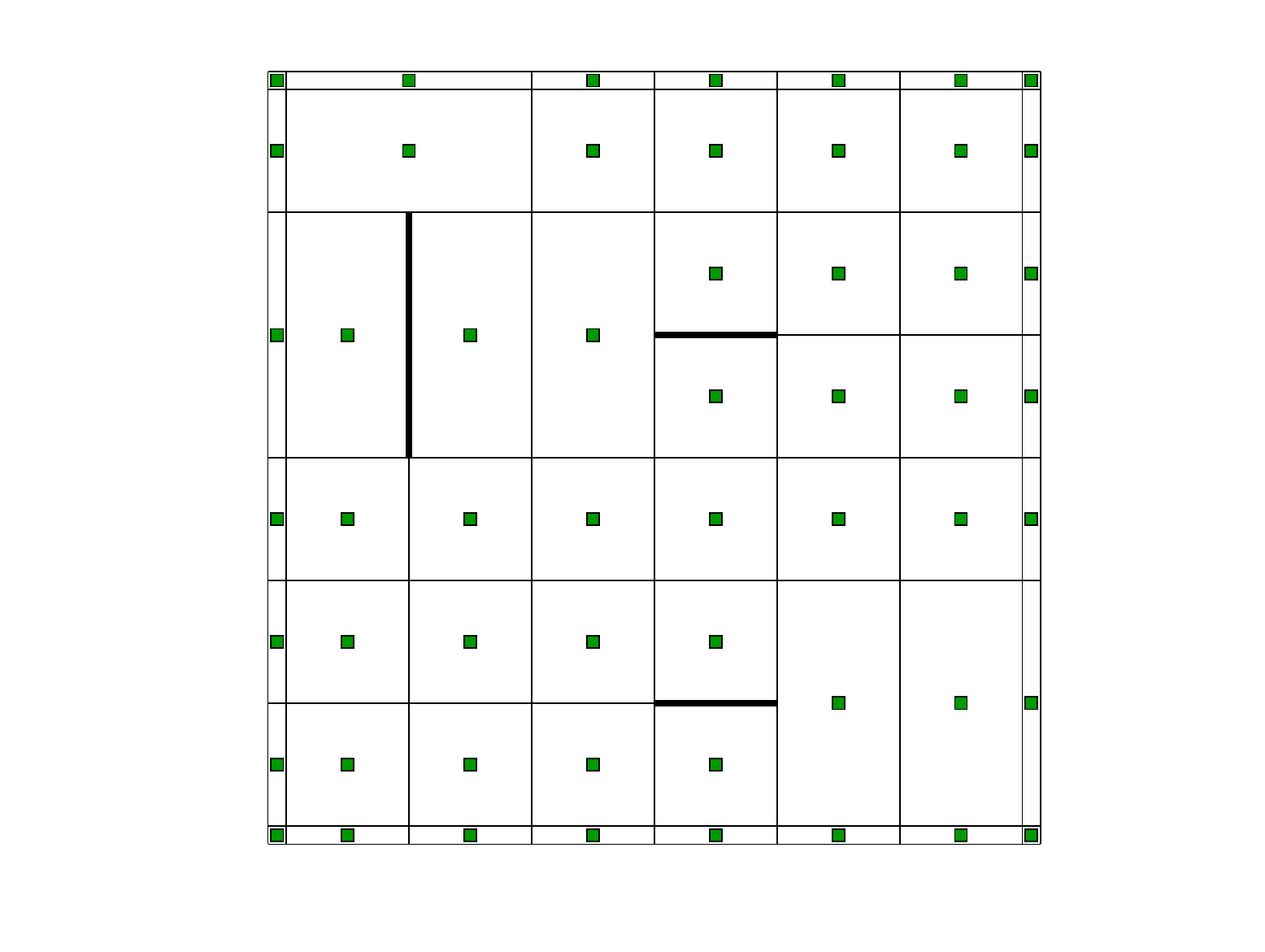}}
\end{subfigure}
\end{center}
\caption{Sequence of meshes for the spline complex, with their respective anchors, for $p=3$.} \label{fig:Tmesh-p3}
\end{figure}

\begin{figure}[h!]
\begin{center}
\begin{subfigure}[Mesh $\Mzero$]{\includegraphics[width=.47\textwidth]{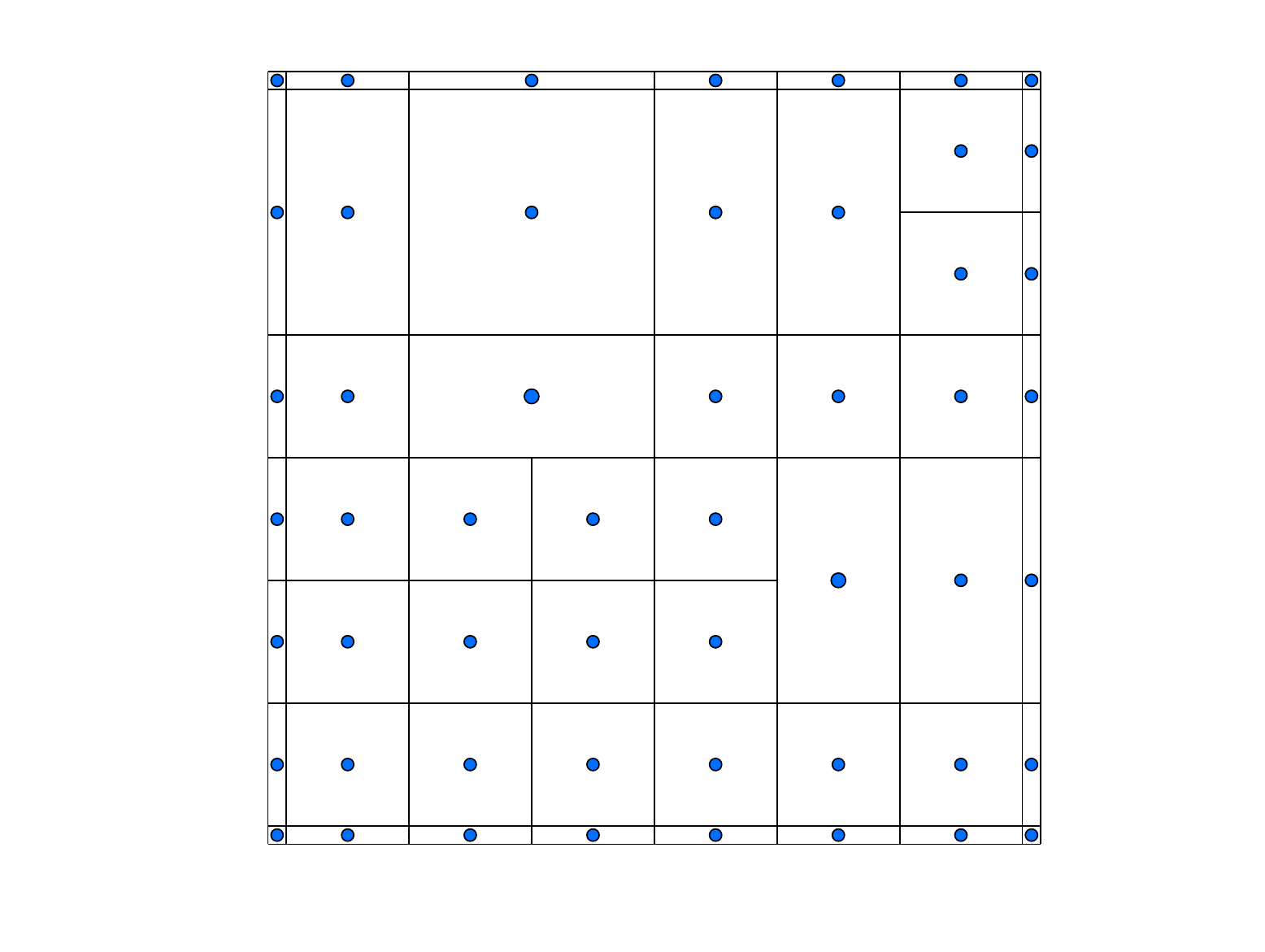}}
\end{subfigure}
\end{center}
\begin{subfigure}[Mesh $\Moneh$]{\includegraphics[width=.47\textwidth]{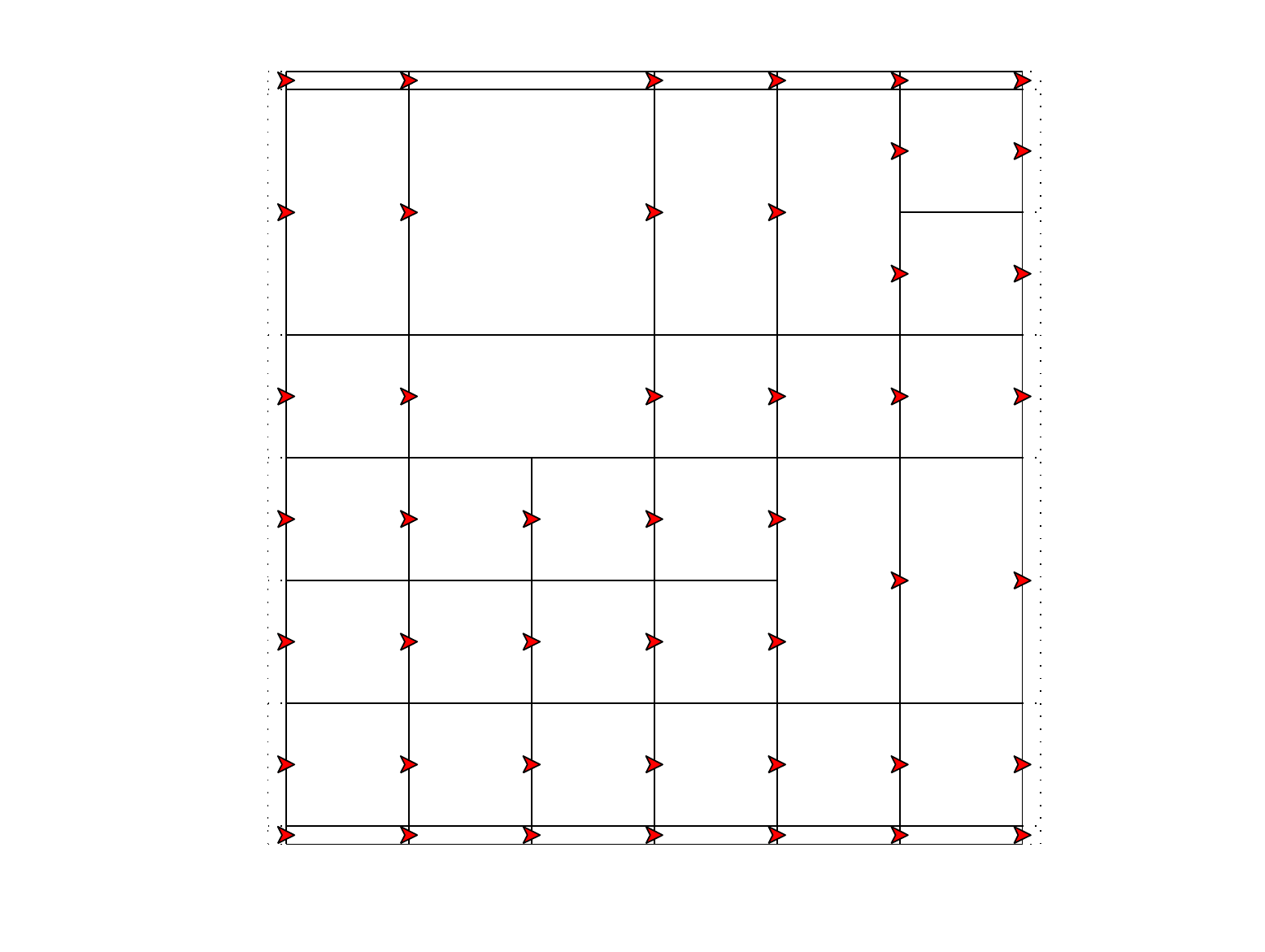}}
\end{subfigure}
\begin{subfigure}[Mesh $\Monev$]{\includegraphics[width=.47\textwidth]{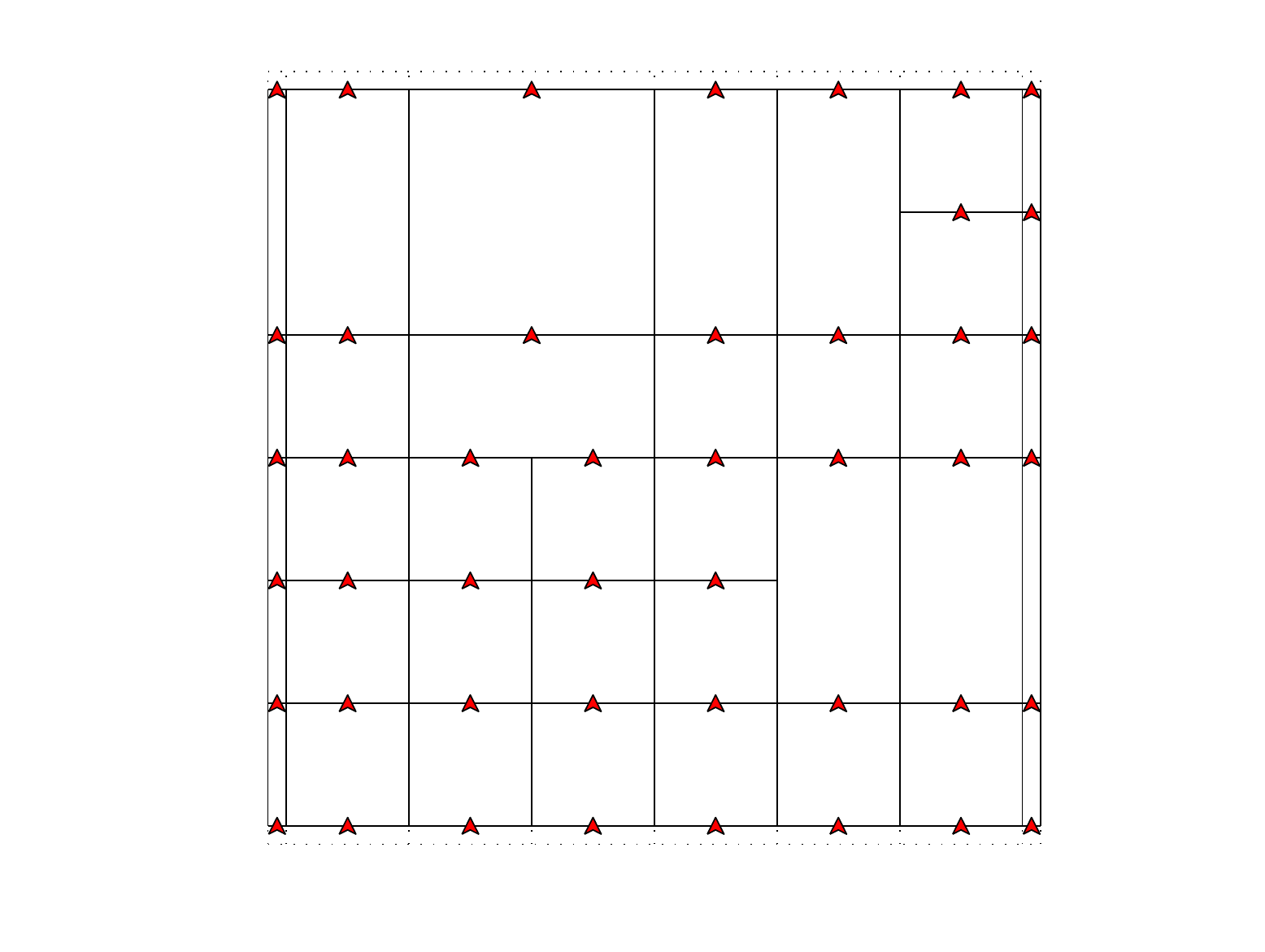}}
\end{subfigure}
\begin{center}
\begin{subfigure}[Mesh $\Mtwo$]{\includegraphics[width=.47\textwidth]{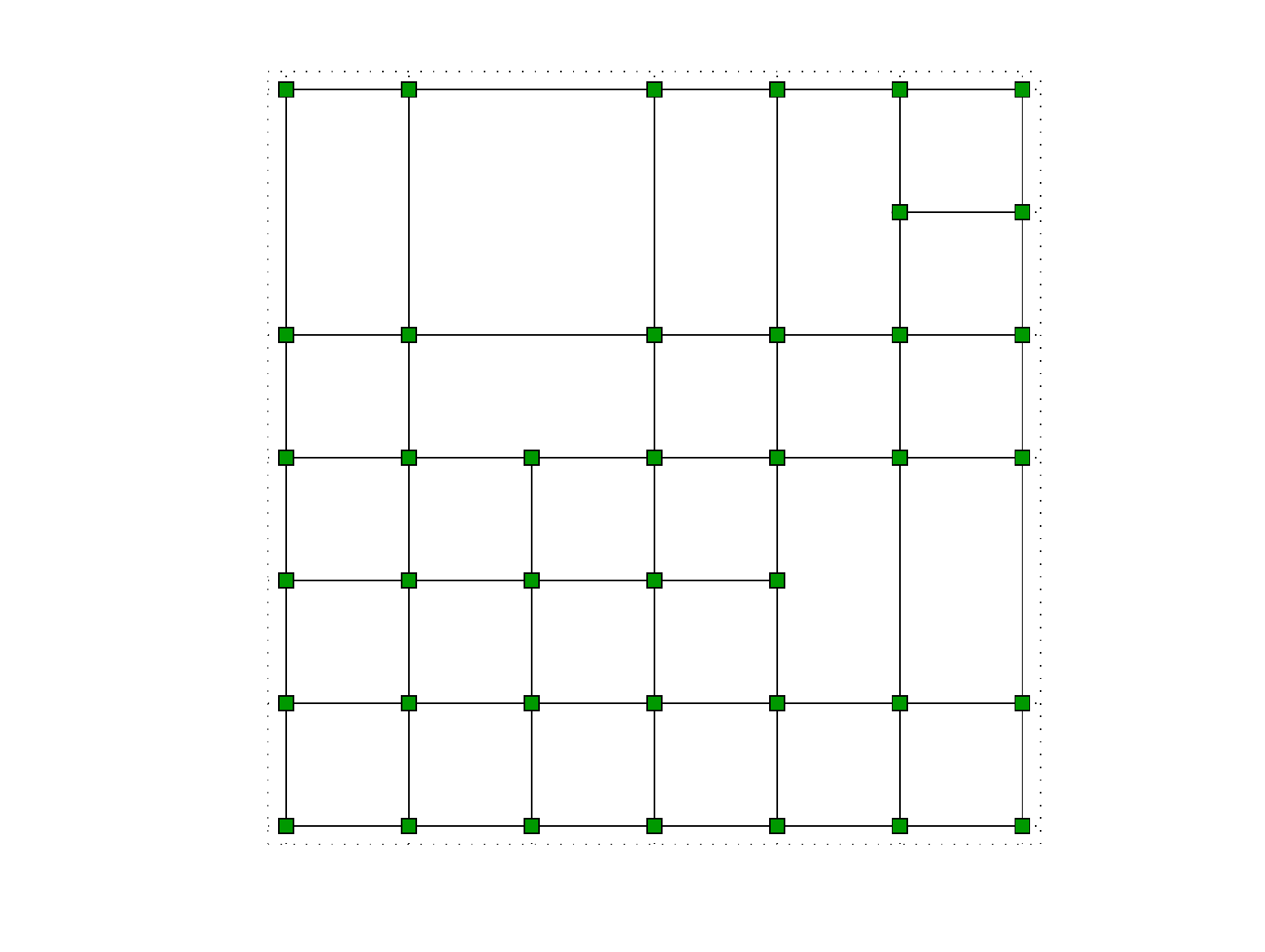}}
\end{subfigure}
\end{center}
\caption{Sequence of meshes for the spline complex, with their respective anchors, for $p=2$.} \label{fig:Tmesh-p2}
\end{figure}

The choice of these meshes becomes clear when computing the derivatives. For instance, let $p=3$ and consider the simple example of a mesh with only one horizontal T-junction, as in Figure~\ref{fig:oneT0}. Choosing the anchor $A \in \A_{p,p}(\Mzero)$ located at the T-junction, it is clear from \eqref{eq:derivative-of-splines} that $\frac{\partial B^A_{p,p}}{\partial x}$ is a linear combination of $B^{A^l}_{p-1,p}$ and $B^{A^r}_{p-1,p}$, with $A^l, A^r \in \A_{p-1,p}(\Moneh)$ as in Figure~\ref{fig:oneT11}. Hence, $\frac{\partial B^A_{p,p}}{\partial x} \in T_{p-1,p}(\Moneh)$ (see \eqref{eq:T-splines-1-forms-on-parametric}), but since $A^l \not \in \A_{p-1,p}(\Mzero)$, we have $\frac{\partial B^A_{p,p}}{\partial x} \not \in T_{p-1,p}(\Mzero)$. The argument is analogous for the  partial derivative with respect to the $y$ direction, with vertical T-junctions.

\begin{figure}
\begin{subfigure}[Anchor $A \in \A_{p,p}(\Mzero)$]{\includegraphics[width=.47\textwidth]{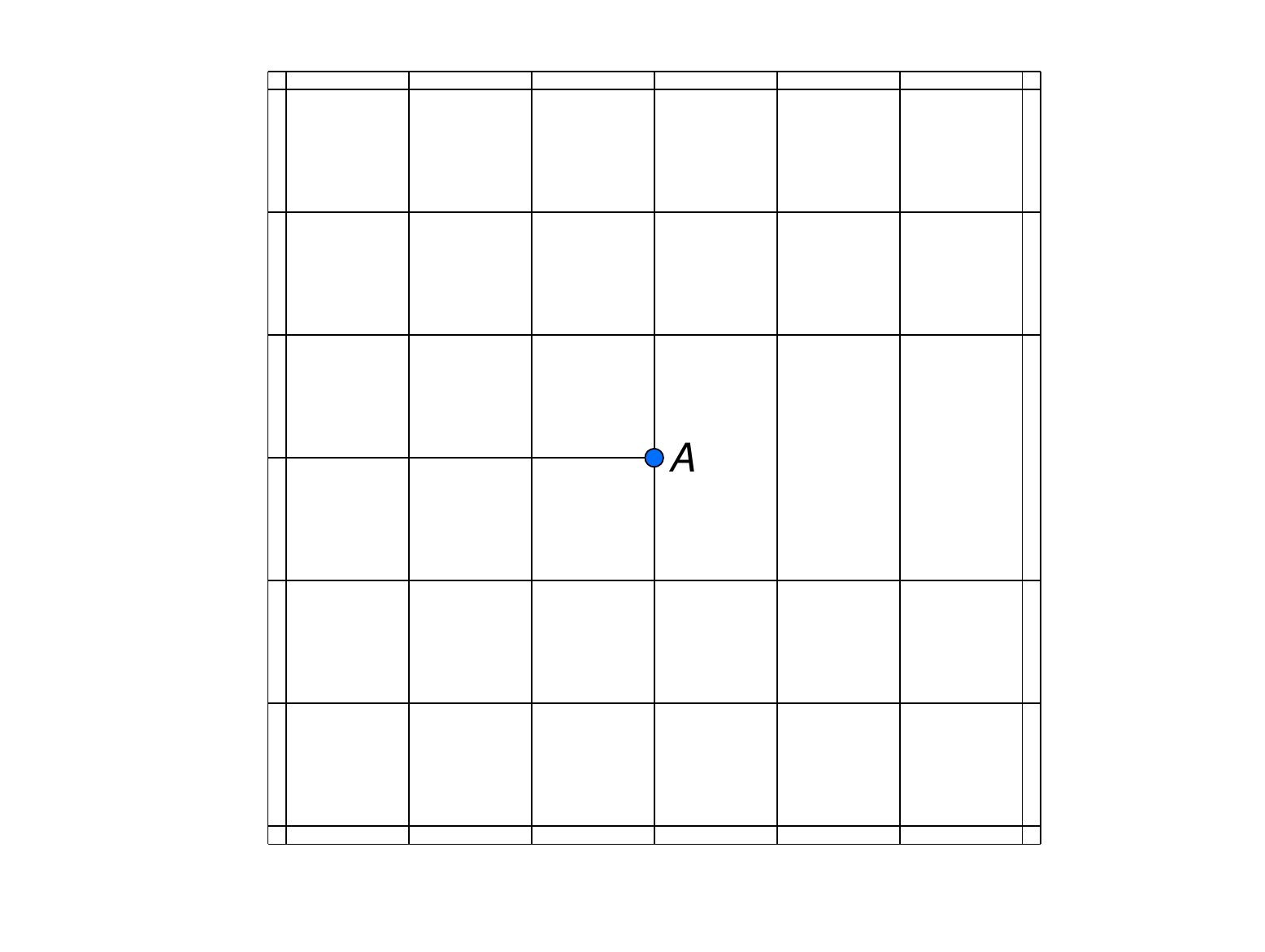} \label{fig:oneT0}}
\end{subfigure}
\begin{subfigure}[Anchors $A^l, A^r \in \A_{p-1,p}(\Moneh)$]{\includegraphics[width=.47\textwidth]{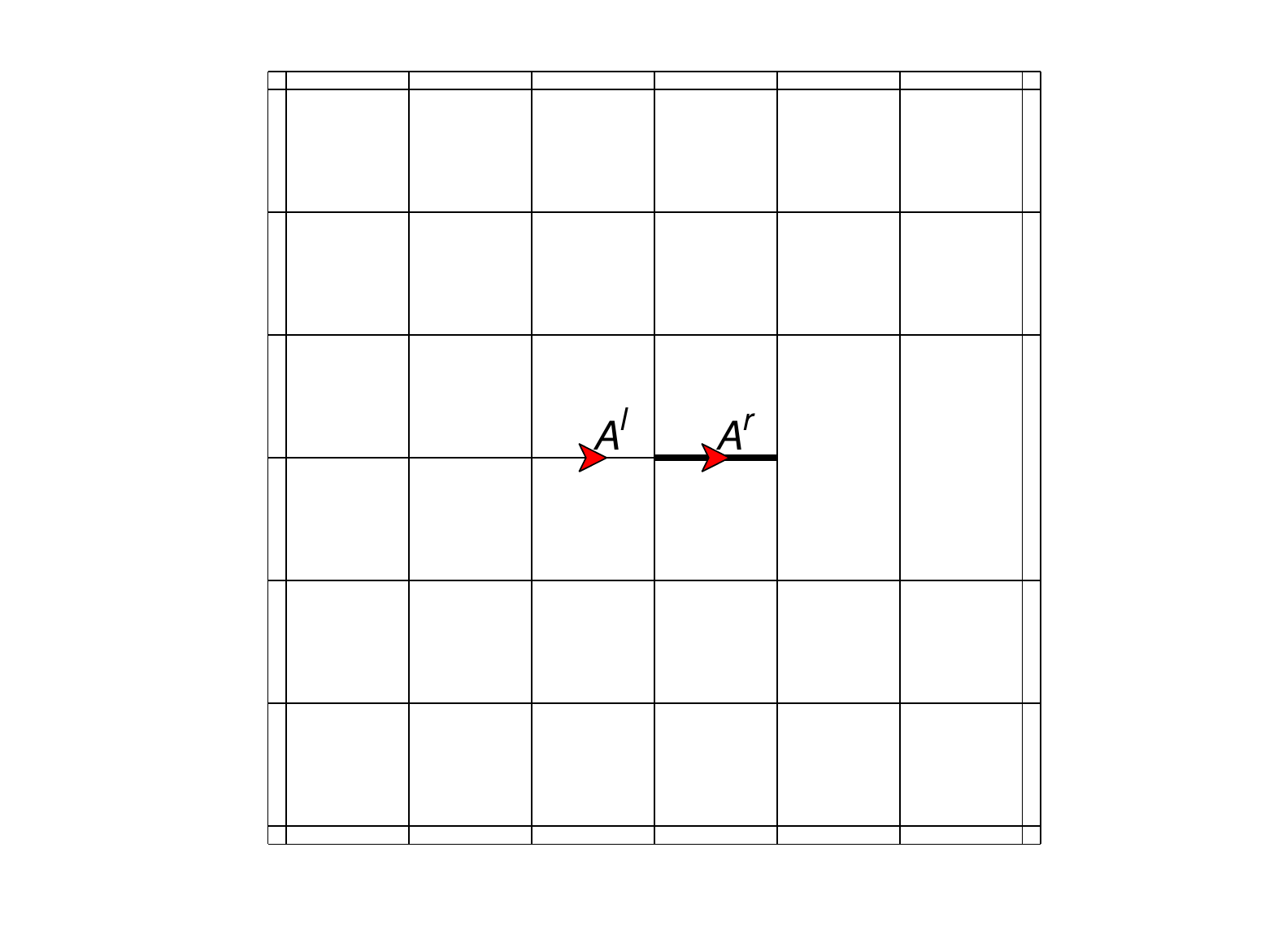} \label{fig:oneT11}}
\end{subfigure}
\caption{The first partial derivative $\frac{\partial B^A_{p,p}}{\partial x}$ is a linear combination of $B^{A^l}_{p-1,p}$ and $B^{A^r}_{p-1,p}$. \label{fig:oneT}}
\end{figure}

We have the following result.
\begin{proposition}\label{prop:extension}
Assuming    $\M^0 \in \AS_{p,p}$, \B  it holds that $\Moneh \in \AS_{p-1,p}$, $\Monev \in \AS_{p,p-1}$, and $\Mtwo \in \AS_{p-1,p-1}$.
 Moreover,  $(\M^0)_{\ext} = (\Moneh)_\ext = (\Monev)_{\ext} =
 (\Mtwo)_{\ext}$.  

\end{proposition}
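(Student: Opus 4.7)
The plan is to verify the two assertions—analysis-suitability of the derived meshes and the equality of their extended meshes—by carefully tracking how the construction of $\Moneh,\Monev,\Mtwo$ modifies the T-junction structure of $\M^0$ and the corresponding extensions. I would treat the cases $p$ odd and $p$ even separately, focusing on $\Moneh$; the argument for $\Monev$ is symmetric by transposition of the two coordinate directions, and $\Mtwo$ amounts to iterating both operations.

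For odd $p$, my first task would be to enumerate the T-junctions of $\Moneh$. The construction adds, for every horizontal T-junction $T$ of $\M^0$ (with missing edge, say, to the right), the single horizontal edge of its first-bay face-extension, joining $T$ to the next vertical-knot-line point $T'$. Then $T$ becomes four-valent in $\Moneh$ and, at most, one new horizontal T-junction $T'$ is created (only when $T'$ was not already a vertex of $\M^0$, in which case adding the edge instead fills its missing edge and makes it four-valent). The vertical T-junctions of $\M^0$ are untouched because an added horizontal edge lies strictly in the interior of a face of $\M^0$ and hence cannot pass through any other vertex of $\M^0$. Next, a short arithmetic check using the formulas in Section~\ref{sect:AS} shows that the extension of $T'$ in the new degree $(p-1,p)$—with $\lfloor p/2\rfloor=(p-1)/2$ face-bays and $\lfloor (p-2)/2\rfloor=(p-3)/2$ edge-bays—is contained in the horizontal extension of $T$ in degree $(p,p)$, shifted one bay to the right along the same horizontal line. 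Combined with the fact that vertical extensions are unchanged (both as a set of T-junctions and as segments), the hypothesis $\M^0\in\AS_{p,p}$ then propagates to $\Moneh\in\AS_{p-1,p}$. For even $p$, the mesh modifications are only the removal of zero-area columns or rows at the boundary; the interior T-junction structure is identical, and a direct computation shows that the total horizontal extension has the same length (namely $p-1$ bays) and the same face/edge split in degrees $(p,p)$ and $(p-1,p)$, so analysis-suitability passes immediately to $\Moneh$.

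For the extended meshes, the idea is a telescoping of face-extension lengths. For odd $p$ and a horizontal T-junction $T$ of $\M^0$ with missing edge to the right, $(\M^0)_\ext$ adds $\lfloor(p+1)/2\rfloor=(p+1)/2$ face-bays to the right of $T$. In $\Moneh$ the first of these bays is already part of the mesh, and the new horizontal T-junction $T'$ contributes a further face-extension of $(p-1)/2$ bays, so $(\Moneh)_\ext$ adds exactly the same cumulative $(p+1)/2$-bay horizontal segment; the vertical face-extensions coincide because vertical T-junctions are unchanged. The same telescoping, applied in both directions, yields $(\Monev)_\ext=(\Mtwo)_\ext=(\M^0)_\ext$. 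For even $p$ the boundary removals are precisely those needed to align the face-extensions of boundary-adjacent T-junctions, so the extended meshes again coincide. The main obstacle throughout is the combinatorial bookkeeping: one must verify that no unexpected T-junctions appear at positions other than the $T'$ described above, that added horizontal edges never meet pre-existing vertical T-junctions, and that the iterated construction producing $\Mtwo$ does not generate spurious interactions between new horizontal and new vertical T-junctions. All of these reduce to the observation that a first-bay face-extension is confined to the interior of a single face of $\M^0$, hence can only alter the mesh structure at its two endpoints, but turning this observation into a clean case analysis is the bulk of the work.
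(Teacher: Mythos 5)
Your proposal is correct and is essentially the paper's own proof written out in detail: the paper disposes of the proposition in a single sentence, asserting it is an immediate consequence of $\M^0\in\AS_{p,p}$ and of the extension lengths of Section~\ref{sect:AS}, and the content behind that sentence is exactly your bookkeeping (new T-junctions appear only one bay beyond old ones, their extensions in the lowered degree are contained in the original extensions, the analysis-suitability of $\M^0$ rules out interactions with vertical extensions and with the added segments, and the one-bay mesh edge plus the new $(p-1)/2$-bay face-extension telescopes to the original $(p+1)/2$-bay face-extension, giving equal extended meshes). The only blemish is your even-$p$ length count, which follows the $\lfloor(p_1-1)/2\rfloor$-bay edge-extension formula rather than the paper's ``$p_\ell/2$ bays in both directions'' clause (the two are inconsistent in the paper itself), so in that convention the degree-$(p-1,p)$ extension is only contained in, not equal to, the degree-$(p,p)$ one; since containment of extensions and equality of the face parts are all your argument uses, the conclusion is unaffected.
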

\begin{proof}
The result is an immediate consequence of $\M \in \AS_{p,p}$, and the length of the extensions specified in Section~\ref{sect:AS}.
\end{proof}
\begin{remark} Note that, although the four meshes are different, all integral
  computations are carried out in the extended T-mesh, which is the
  same for all the spaces. As a consequence the four spaces can be
  implemented within the same data structure, which is based on one
  single mesh, but with different basis functions for each space. This
  is also what occurs  with  standard finite elements.

\end{remark}
\B

The bases for $ \hat Y^{0}_h, \ldots   \hat Y^{2}_h $, are formed by
T-spline functions \eqref{eq:T-spline-function}  with a scaling as in
Section~\ref{sec:compatible -splines-parametric-domain}. Precisely, introducing the notation  $D[\Xi^A_i](\zeta_i) := \frac{p}{|\Xi^A_i|}N[\Xi^A_i](\zeta_i)$, we have:

\begin{equation}
  \label{eq:0-forms-basis-Tsplines}
  \hat Y^{0}_h = \text{span} \left \{ (\zeta_1, \zeta_2)  \mapsto
   N[\Xi_1^A] (\zeta_1) N[\Xi_2^A] (\zeta_2) :\, A \in \A_{p,p}(\M^0)  \right \} ,
\end{equation}

\begin{equation}
  \label{eq:1-forms-basis-Tspline}
  \begin{aligned}
    \hat Y^{1}_h & = \text{span }  I \cup II, \text{ with}\\
 I & =\left \{ (\zeta_1, \zeta_2)  \mapsto
    D[\Xi_1^A] (\zeta_1) N[\Xi_2^A] (\zeta_2) \hat \be_1 :\, A \in \A_{p-1,p}(\M^1_1)  \right \} ,\\
 II & =\left \{ (\zeta_1, \zeta_2)  \mapsto
    N[\Xi_1^A] (\zeta_1) D[\Xi_2^A] (\zeta_2) \hat \be_2 :\, A \in \A_{p,p-1}(\M^1_2)  \right \}  .
  \end{aligned}
\end{equation}

\begin{equation}
  \label{eq:1*-forms-basis-Tspline}
  \begin{aligned}
    \hat Y^{1*}_h & = \text{span }  I \cup II, \text{ with}\\
 I & =\left \{ (\zeta_1, \zeta_2)  \mapsto
    N[\Xi_1^A] (\zeta_1) D[\Xi_2^A] (\zeta_2) \hat \be_1 :\, A \in \A_{p,p-1}(\M^1_2)  \right \} ,\\
 II & =\left \{ (\zeta_1, \zeta_2)  \mapsto
    D[\Xi_1^A] (\zeta_1) N[\Xi_2^A]
   (\zeta_2) \hat \be_2 :\, A \in \A_{p-1,p}(\M^1_1)  \right \} . 
  \end{aligned}
\end{equation}
\begin{equation}
  \label{eq:2-forms-basis-Tsplines}
  \hat Y^{2}_h = \text{span} \left \{ (\zeta_1, \zeta_2)  \mapsto
    D[\Xi_1^A] (\zeta_1) D[\Xi_2^A] (\zeta_2) :\, A \in \A_{p-1,p-1}(\M^2)  \right \} ,
\end{equation}

The main result of this section is the following.
\begin{theorem}\label{thm:T-spline-complexes}
Under the assumptions of Proposition~\ref{prop:characterization}, the
following two-dimensional complexes 
\begin{equation}\label{eq:Tsplines-complex}
\begin{CD}
\R @>>>\hat Y^{0}_h  @>\hat \grad>>  \hat Y^{1}_h @>\hat {\rot}>>
\hat Y^{2}_h @> >> 0,
\end{CD}
\end{equation}
\begin{equation}\label{eq:Tsplines-complex-*}
\begin{CD}
\R @>>>\hat Y^{0}_h  @>\hat \brot>>  \hat Y^{1*}_h @>\hat {\div}>>
\hat Y^{2}_h @> >> 0,
\end{CD}
\end{equation}
where  $\hat {\rot} \bu = (\partial_1 u_2 - \partial_2 u_1)$ is the scalar rotor and $\brot u =  (\partial_2 u,  - \partial_1 u)^T$ is the vector rotor, are well defined and exact.
\end{theorem}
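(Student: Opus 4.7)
My plan is to leverage Proposition~\ref{prop:characterization}, together with Proposition~\ref{prop:extension}, to identify each of $\hat Y^0_h$, $\hat Y^1_h$, $\hat Y^{1*}_h$, $\hat Y^2_h$ with a concrete space of piecewise polynomials on the common extended T-mesh $(\Mzero)_\ext$, carrying a prescribed bidegree and prescribed inter-element $C^r$ continuity. Once this reduction is in place, both well-definedness and exactness reduce to statements about piecewise polynomials on a single fixed mesh, and I can mimic the template of Theorem~\ref{th:exact}.

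For well-definedness I would observe that if $\hat\phi$ is piecewise polynomial of bidegree $(p,p)$ on $(\Mzero)_\ext$ with the continuity prescribed by $T_{p,p}(\Mzero)$, then $\partial_1 \hat\phi$ is piecewise polynomial of bidegree $(p-1,p)$ with continuity reduced by one across vertical lines of $\Mzero$ and unchanged elsewhere. The extra vertical lines present in $\Moneh$ arise from horizontal T-junction face-extensions and carry multiplicity one, so the smoothness of $\hat\phi$ across those lines is more than enough to meet the continuity demanded by $T_{p-1,p}(\Moneh)$. Symmetric verifications handle the remaining operators, and Proposition~\ref{prop:characterization} identifies each resulting piecewise polynomial with an element of the target T-spline space.

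For exactness of \eqref{eq:Tsplines-complex} I would follow the scheme of Theorem~\ref{th:exact}. The identity $\mathbb{R} = \ker(\hat\grad)$ is immediate. For $\ker(\hat{\rot}) \subseteq \mathrm{im}(\hat\grad)$, given $\hat\bu = (\hat u_1,\hat u_2) \in \hat Y^1_h$ with $\hat{\rot}\,\hat\bu = 0$, I would form the Poincaré-type primitive
\begin{equation*}
\hat\phi(\zeta_1,\zeta_2) := \int_0^{\zeta_1} \hat u_1(\eta,0)\,d\eta + \int_0^{\zeta_2} \hat u_2(\zeta_1,\eta)\,d\eta,
\end{equation*}
in analogy with \eqref{eq:phi-h-poincare}, which classically satisfies $\hat\grad\hat\phi = \hat\bu$; the task is then to verify $\hat\phi \in \hat Y^0_h$. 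For $\hat Y^2_h \subseteq \mathrm{im}(\hat{\rot})$, given $\hat\varphi \in \hat Y^2_h$ I would set $\hat u_1 = 0$ and $\hat u_2(\zeta_1,\zeta_2) := \int_0^{\zeta_1} \hat\varphi(\eta,\zeta_2)\,d\eta$, and check $(\hat u_1,\hat u_2) \in \hat Y^1_h$. Exactness of \eqref{eq:Tsplines-complex-*} then follows from that of \eqref{eq:Tsplines-complex} by the coordinate-swap symmetry exchanging $(\hat\grad, \hat{\rot})$ with $(\hat\brot, \hat{\div})$ and $\hat Y^1_h$ with $\hat Y^{1*}_h$.

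The hard part will be these membership verifications: I must confirm that each integrated primitive satisfies the prescribed $C^r$ continuity across every mesh line of the target extended T-mesh, in particular across the face-extensions that distinguish $\Mzero$, $\Moneh$, $\Monev$ and $\Mtwo$. Here Proposition~\ref{prop:extension} is essential, since it forces the four extended T-meshes to coincide and pins down exactly the lines at which continuity must be checked. For $\hat\phi$, the required smoothness across lines inherited from $\Mzero$ follows directly from the continuity of $\hat u_1$ and $\hat u_2$, but the smoothness across an extension line present in, say, $\Moneh \setminus \Mzero$ requires using the compatibility $\partial_1 \hat u_2 = \partial_2 \hat u_1$ coming from $\hat{\rot}\,\hat\bu = 0$ to upgrade the \emph{a priori} regularity to the full polynomial smoothness demanded across lines absent from $\Mzero$. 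A similar but simpler argument, using directly the polynomial structure of $\hat\varphi$ on the cells of $(\Mzero)_\ext$, settles the surjectivity of $\hat{\rot}$.
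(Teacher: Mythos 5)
Your reduction of well-definedness and of the step $\ker(\hat\rot)\subseteq\mathrm{im}(\hat\grad)$ follows the paper's route (Poincar\'e primitive plus Proposition~\ref{prop:characterization} via the common extended mesh of Proposition~\ref{prop:extension}), and that part is sound, modulo a slip (the segments added to $\Moneh$ are \emph{horizontal} face-extensions of horizontal T-junctions, not vertical lines). The genuine gap is in the surjectivity of $\hat\rot$. Your candidate preimage $\hat u_1=0$, $\hat u_2(\zeta_1,\zeta_2)=\int_0^{\zeta_1}\hat\varphi(\eta,\zeta_2)\,d\eta$ does \emph{not} in general belong to $\hat Y^1_h$ on a true T-mesh. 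The reason is that integration in $\zeta_1$ propagates reduced-smoothness lines horizontally: if $\hat\varphi\in T_{p-1,p-1}(\Mtwo)$ has a genuine $C^{p-2}$ crease in $\zeta_2$ across a horizontal segment of $\M_{\ext}$ at height $c$ that terminates in the interior (e.g.\ take $\hat\varphi$ a single blending function associated with an interior T-junction whose extension does not reach the boundary), then for $\zeta_1$ to the right of that segment one has $\hat u_2(\zeta_1,\cdot)=\bigl(\int N[\Xi_1^A]\bigr)\,N[\Xi_2^A](\cdot)$ up to lower terms, which still has the crease at $\zeta_2=c$. Hence $\hat u_2$ has a kink along a line that is not in $\M_{\ext}$, so it is not a polynomial on the cells of $(\Monev)_{\ext}=\M_{\ext}$ crossed by that invisible crease, and by the very characterization you invoke it cannot lie in $T_{p,p-1}(\Monev)$. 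This is exactly the point where the tensor-product argument of Theorem~\ref{th:exact} (where every knot line spans the whole width) does not transfer; note also why the analogous membership \emph{does} work in your gradient step: there both partials of $\hat\phi$ are prescribed by $\hat\bu$, whereas here only the combination $\partial_1\hat u_2-\partial_2\hat u_1$ is prescribed and your choice $\hat u_1=0$ leaves $\partial_2\hat u_2$ uncontrolled.

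The paper avoids this difficulty altogether: it proves $\mathrm{im}(\hat\rot)=\hat Y^2_h$ by a dimension count, establishing
\begin{equation*}
\dim(\hat Y^0_h)+\dim(\hat Y^2_h)=\dim(\hat Y^1_h)+1
\end{equation*}
through Euler's formula $F_0+V_0=E_0+1$ for the T-mesh, with a separate bookkeeping for odd $p$ (anchors counted on $\M$, $\Moneh$, $\Monev$, $\Mtwo$ after adding first-bay face-extensions, using that extensions do not intersect) and for even $p$ (removal of boundary rows/columns, using $V_0^B=E_0^B$), and then deduces surjectivity from the already established exactness at $\hat Y^0_h$ and $\hat Y^1_h$. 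Your proposal contains no substitute for this combinatorial lemma, and the ``similar but simpler argument'' you invoke for surjectivity is precisely the step that fails. To repair the proof along your lines you would either have to add a correction term (a nonzero $\hat u_1$) cancelling the spurious creases --- which is not obviously easier than the counting argument --- or reproduce the Euler-formula dimension identity as in the paper.
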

\begin{proof}
  In the proof we only consider \eqref{eq:Tsplines-complex}, since
  \eqref{eq:Tsplines-complex-*} is equivalent. The well posedness of
  the complex follows from
  \begin{equation}
    \label{eq:proof-step-1}
    \hat \grad : \hat Y^{0}_h \rightarrow \hat Y^{1}_h \qquad
    \text{and} \qquad   \hat \rot : \hat Y^{1}_h \rightarrow \hat
    Y^{2}_h,
  \end{equation}
which, in turn, easily follows from the definitions
\eqref{eq:T-splines-0-forms-on-parametric}--\eqref{eq:T-splines-2-forms-on-parametric}
and from  Proposition~\ref{prop:characterization}.

Exactness of \eqref{eq:Tsplines-complex} means
\begin{equation}
  \label{eq:exactenss-1}
\R =  \mathrm{ker}( \hat \grad),
\end{equation}
\begin{equation}
  \label{eq:exactenss-2}
  \mathrm{im}(\hat \grad) =  \mathrm{ker}( \hat \rot),
\end{equation}
\begin{equation}
  \label{eq:exactenss-3}
  \mathrm{im}(\hat \rot)=  Y^{2}_h.
\end{equation}
The first part, i.e., \eqref{eq:exactenss-1}, is obvious. Moreover
\eqref{eq:exactenss-2} is also simple: indeed if $\hat \bu \in \hat
Y^{1}_h$ has null $\hat \rot $, then $ \hat \bu = \hat \grad \hat
\phi$, where, e.g., 
\begin{equation}
  \label{eq:phi-h-poincare-T-splines}
  \hat \phi(\zeta_1,\zeta_2) = \int _0^{\zeta_1}  \hat
  u_1(\eta,0)\, d \eta + \int _0^{\zeta_2}  \hat
  u_2(\zeta_1,\eta)\, d \eta.
\end{equation}

Since, $ \hat \bu = \hat \grad \hat
\phi$, then  $\hat \phi$  has to be element by element (of $\M_{ext}$)
a $p$-degree tensor-product polynomial. Then, $\hat \phi$ inherits  the interelement regularity from $\hat \bu$ and has the one of functions in  $ \hat Y^{0}_h$.  Then, by Proposition~\ref{prop:characterization}\B,
 $\hat \phi \in \hat Y^{0}_h$.  The last
point, \eqref{eq:exactenss-3},  follows from the dimension formula 
\begin{equation}
  \label{eq:proof-step-2}
  \dim (\hat Y^{0}_h ) +   \dim (\hat Y^{2}_h ) =   \dim (\hat Y^{1}_h ) + 1.
\end{equation}
Indeed, using \eqref{eq:exactenss-2}, \eqref{eq:exactenss-1}, and \eqref{eq:proof-step-2},
\begin{displaymath}
  \begin{aligned}
        \dim ( \mathrm{im}(\hat \rot)) &=   \dim (\hat Y^{1}_h )  - \dim (
    \mathrm{ker}( \hat \rot))  \\ 
&=   \dim (\hat Y^{1}_h )  - \dim ( \mathrm{im}(\hat \grad) )) \\  
&=   \dim (\hat Y^{1}_h )  - \dim ( \hat Y^{0}_h) +\dim ( \mathrm{ker}(\hat \grad) ))\\  
&=   \dim (\hat Y^{1}_h )  - \dim ( \hat Y^{0}_h) +1\\  
& = \dim (\hat Y^{2}_h ) .
  \end{aligned}
\end{displaymath}
In order to prove \eqref{eq:proof-step-2}, we recall the  Euler's formula for the T-mesh $\M$
\begin{equation}\label{eq:euler}
F_0 + V_0 = E_0 + 1,
\end{equation}
where $F_0$ is the number of faces, $E_0$ the number of edges and $V_0$ the number of vertices of $\M$, including knot repetitions, zero length edges and empty elements. The proof is different for odd and even $p$.

\begin{trivlist}
\item[Case 1)] Let $p$ be odd. We can separate the edges into
  horizontal and vertical ones, and with self-explaining notation we
  have $E_0 = E_0^H + E_0^V$. Similarly, the vertices can be divided
  into horizontal T-junctions, vertical T-junctions and all the other
  vertices (including those on the boundary), in the form $V_0 = V_0^H
  + V_0^V + V_0^+$. For odd $p$ the meshes $\Moneh$ and $\Monev$ are
  constructed by adding the first-bay face-extension of horizontal and
  vertical T-junctions, respectively. Thus, using the assumption that
  T-junction extensions do not intersect,  the number of horizontal edges in $\Moneh$ is $E_1^H = E_0^H + V_0^H$, and the number of vertical edges in $\Monev$ is $E_1^V = E_0^V + V_0^V$. Similarly, the mesh $\Mtwo$ is contructed by adding all the first-bay face-extensions, and the number of faces in $\Mtwo$ is equal to $F_2 = F_0 + V_0^H + V_0^V$. 

Since $p$ is odd, and from the positions of the anchors in every mesh (see Figure~\ref{fig:Tmesh-p3}), the dimensions of the spaces are
\begin{equation*}
\dim (\hat Y^{0}_h) = V_0, \quad \dim (\hat Y^{1}_h) = E_1^H + E_1^V = E_0 + V_0^H + V_0^V, \quad \dim(\hat Y^{2}_h) = F_2 = F_0 + V_0^H + V_0^V,
\end{equation*}
and using \eqref{eq:euler} the proof is finished.

\item[Case 2)] Let $p$ be even. We denote by $V_0^B$ and $E_0^B$ the number of boundary vertices and boundary edges in $\M$, and we note that $V_0^B = E_0^B$. As before, we distinguish between horizontal and vertical edges, $E_0 = E_0^V + E_0^H$, and also for the boundary edges $E_0^B = E_0^{B,V} + E_0^{B,H}$. For even $p$ the mesh $\Moneh$ (resp. $\Monev$) is constructed by removing the first and last columns (resp. rows) of elements from $\M$. Hence, the number of vertical edges in $\Moneh$ is $E_1^V = E_0^V - E_0^{B,V}$, and the number of horizontal edges in $\Monev$ is $E_1^H = E_0^H - E_0^{B,H}$. Similarly, the mesh $\Mtwo$ is constructed by removing the first and last rows and columns of elements from $\M$, thus the number of vertices in $\Mtwo$ is $V_2 = V_0 - V_0^B$.

From the position of the anchors for even $p$ (see Figure~\ref{fig:Tmesh-p2}), the dimensions of the spaces are
\begin{equation*}
\dim (\hat Y^{0}_h) = F_0, \quad \dim (\hat Y^{1}_h) = E_1^V + E_1^H = E_0 - E_0^B, \quad \dim(\hat Y^{2}_h) = V_2 = V_0 - V_0^B.
\end{equation*}
Using \eqref{eq:euler} and that $V_0^B = E_0^B$ the proof is finished. \qed
\end{trivlist}

\end{proof}

\subsection{Three-dimensional De Rham complex based on  T-splines and B-splines} \label{sect:3D-T-splines}

We construct a three-dimensional complex on the parametric domain  by
tensor product of the two-dimensional T-spline complexes
\eqref{eq:Tsplines-complex}--\eqref{eq:Tsplines-complex-*} and the
 one-dimensional complex \eqref{eq:1D-diagram}. Then we define the spaces on the
 parametric domain $\hat \Omega = (0,1)^3$:
\begin{equation}
    \label{eq:Tsplines-3D-discrete-forms-on-omegahat}
  \begin{aligned}
\Szeroh &:= \hat Y^{0}_h\otimes S_p(\Xi), \\
\Soneh &:= [\hat Y^{1}_h\otimes S_p(\Xi)] \times [ \hat Y^{0}_h\otimes S_{p-1}(\Xi')] ,  \\
\Stwoh &:=  [\hat Y^{1*}_h\otimes S_{p-1}(\Xi')] \times [ \hat Y^{2}_h\otimes S_p(\Xi)], \\
\Sthreeh &:= \hat Y^{2}_h\otimes  S_{p-1}(\Xi') ;
\end{aligned}
\end{equation}
 which form a  complex of the kind \eqref{eq:diag1} (or \eqref{eq:diag2}
 if we also impose homogeneous Dirichlet boundary conditions). 

Assume now that the geometry map $\bF$ is
 tensor-product single-patch spline or NURBS, and fulfills Assumption~\ref{ass:F}, now with $\Szeroh$ defined as in \eqref{eq:Tsplines-3D-discrete-forms-on-omegahat}. Therefore, the push-forwards
 \eqref{eq:diag1}--\eqref{eq:diag2} give the correct \B  complex
 $(\Xzeroh  $, \ldots,  $\Xthreeh  )$ \B on $\Omega$:
 this procedure is completely analogous to what we have already
 described in Section~\ref{sec:compatible -splines-phys-domain} and
 is not detailed here.

It is not a difficulty to consider,  more generally, a multi-patch, or a  T-spline geometry
mapping. This is not detailed here, for the sake of brevity, but the
first case will be addressed in the numerical tests of the next section. 

\subsection{Concluding remarks on the T-spline complex}

As it appears from our presentation, the understanding of the T-spline complex is much less sound than the one of the spline complex, even in two space dimensions. Moreover some of the properties we have studied for splines do not hold in general for T-splines. For example, 
\begin{itemize}
\item the matrices corresponding to the operators are no more the incidence matrices of the mesh $\M$ and a similar fact is true for standard finite elements with hanging nodes, i.e., the T-spline  complex with $p=1$;
\item the definition of control mesh and control fields is not trivial especially when $p$ is even and the analogue of Section \ref{sec:dofs} is not available for T-splines. This deserves further studies. 
\end{itemize}

\section{Numerical results}
\label{sec:num}

In this section we present numerical tests showing the behavior of isogeometric methods for electromagnetic problems.
Since numerical tests for B-splines have already been presented in other works, see e.g., \cite{VB10, Buffa_Sangalli_Vazquez, BRSV11}, we will concentrate here on examples involving also T-splines. All our numerical tests have been performed with the Matlab library GeoPDEs \cite{dFRV11}. It should be said though that GeoPDEs does not have full T-splines capability, and in particular does not provide any T-splines adaptivity in the sense of \cite{Scott_Li_Sederberg_Hughes}. 

The mappings we use in this section always verify the Assumption in Section~\ref{sect:3D-T-splines}, and can be either single-patch or
multi-patch; the meshes we describe are the ones corresponding to the space $\Xzeroh$. The meshes for the other spaces are
constructed following the procedure detailed in
Section~\ref{sect:T-compatible-parametric}. In the figures of this
section, repeated lines of the mesh are represented with thicker
lines, independently of the number of repetitions. In all cases,
internal mesh lines have multiplicity one. 

 \subsection{Maxwell eigenproblem in the square domain} \label{sect:square}

As a first test we solve the two-dimensional eigenvalue problem: \textit{Find $(\bu,\omega) \in \Horot \times \R$ such that}
\begin{equation} \label{eq:eigenproblem_2D}
\int_\Omega \rot \bu \, \rot \overline \bv = \omega^2 \int_\Omega \bu \cdot \overline \bv \quad \forall \bv \in \Horot,
\end{equation}
in the square domain $\Omega = (0,\pi)^2$, for which the exact
eigenvalues are $ \omega^2 = m^2 + n^2$, with $m,n = 0, 1, \ldots$. The aim of this test is to show that the discretization of the problem with T-splines does not present spurious modes.

The coarsest mesh consists of 8 square (non-empty)
elements in the left half, and 4 rectangular (non-empty) elements in
the right half, thus creating several T-junctions on the vertical line
$\zeta_1 = 0.5$. Finer meshes are created by dividing each element into 4 (see Figure~\ref{fig:mesh_square}).

In Table~\ref{tab:eigv} we present the first non-null eigenvalues for degree~3 and for the sequence of meshes explained above. The results show that there are no spurious eigenvalues, and that a good convergence rate is obtained. In Figure~\ref{fig:square_eigv} we display the first non-null eigenvalues computed with discretizations of degree~4 and~5 in a mesh formed by 768~non-empty elements, and its comparison with the exact eigenvalues. Again, it is seen that the discrete eigenvalues are computed with the right multiplicity.

\begin{figure}[h!]
\centering
\includegraphics[width=.4\textwidth]{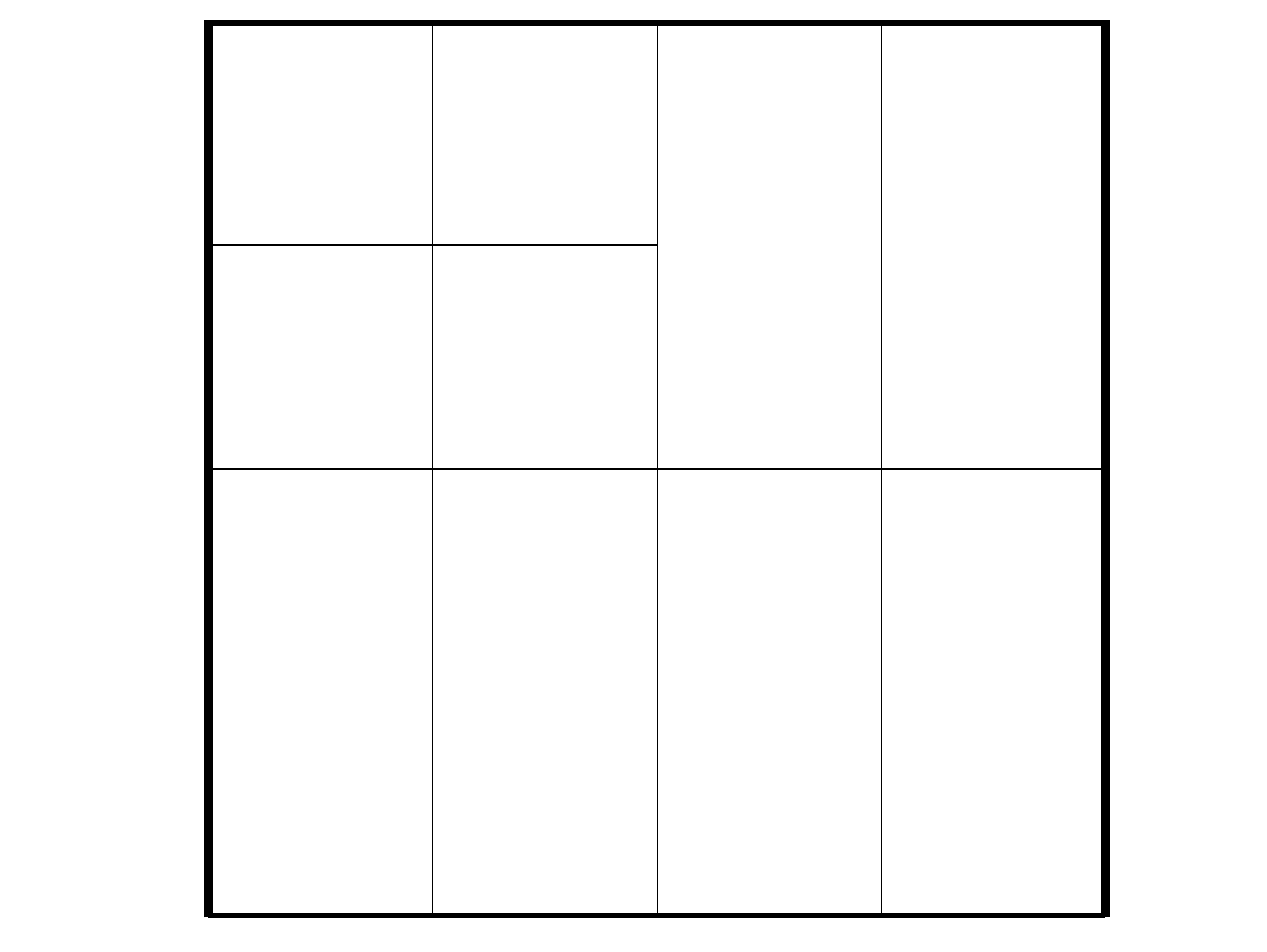}
\includegraphics[width=.4\textwidth]{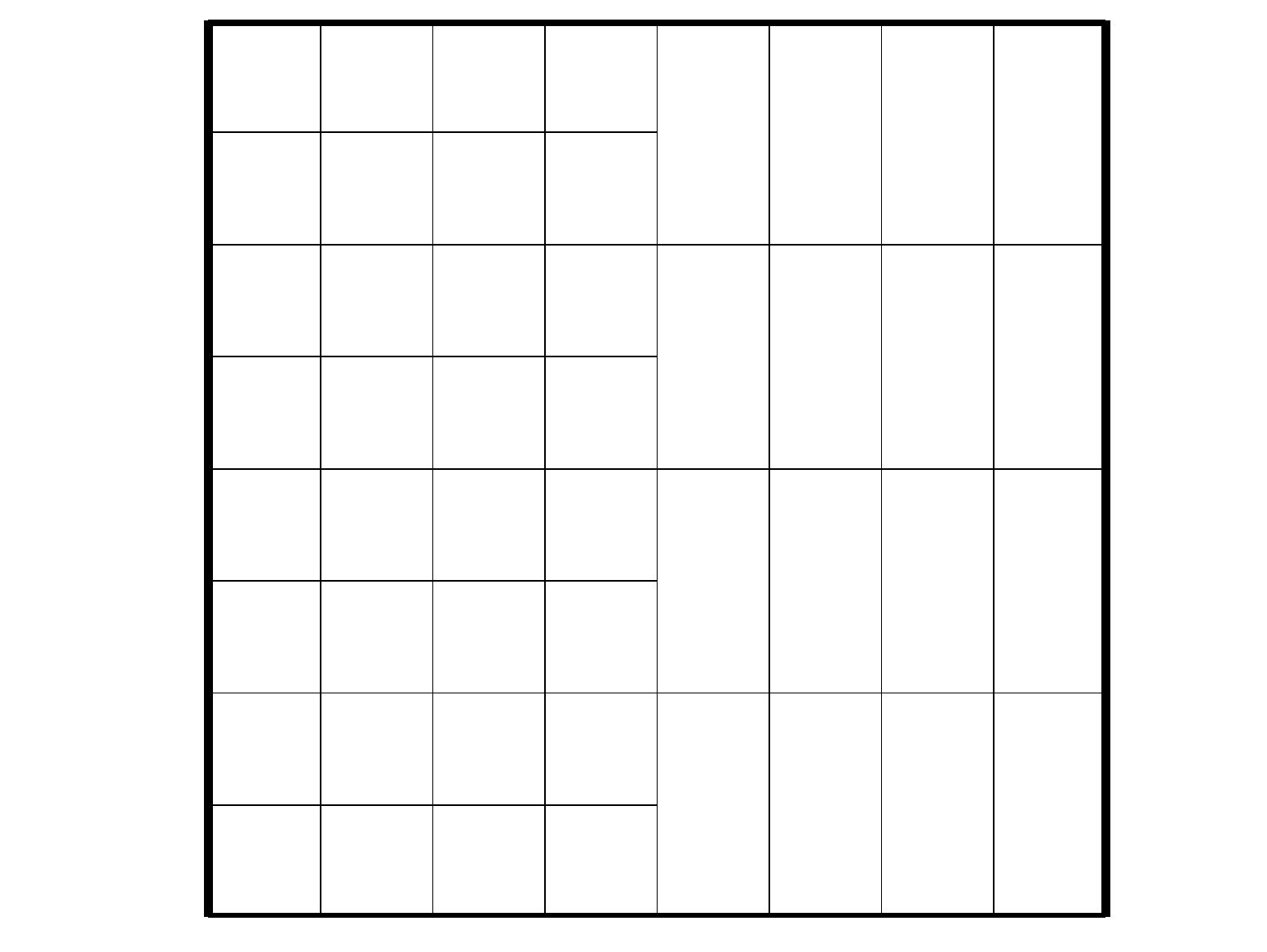}
\caption{Coarsest mesh for the square, and mesh after one refinement step.} \label{fig:mesh_square}
\end{figure}

\begin{figure}[h!]
\centering
\includegraphics[width=.48\textwidth]{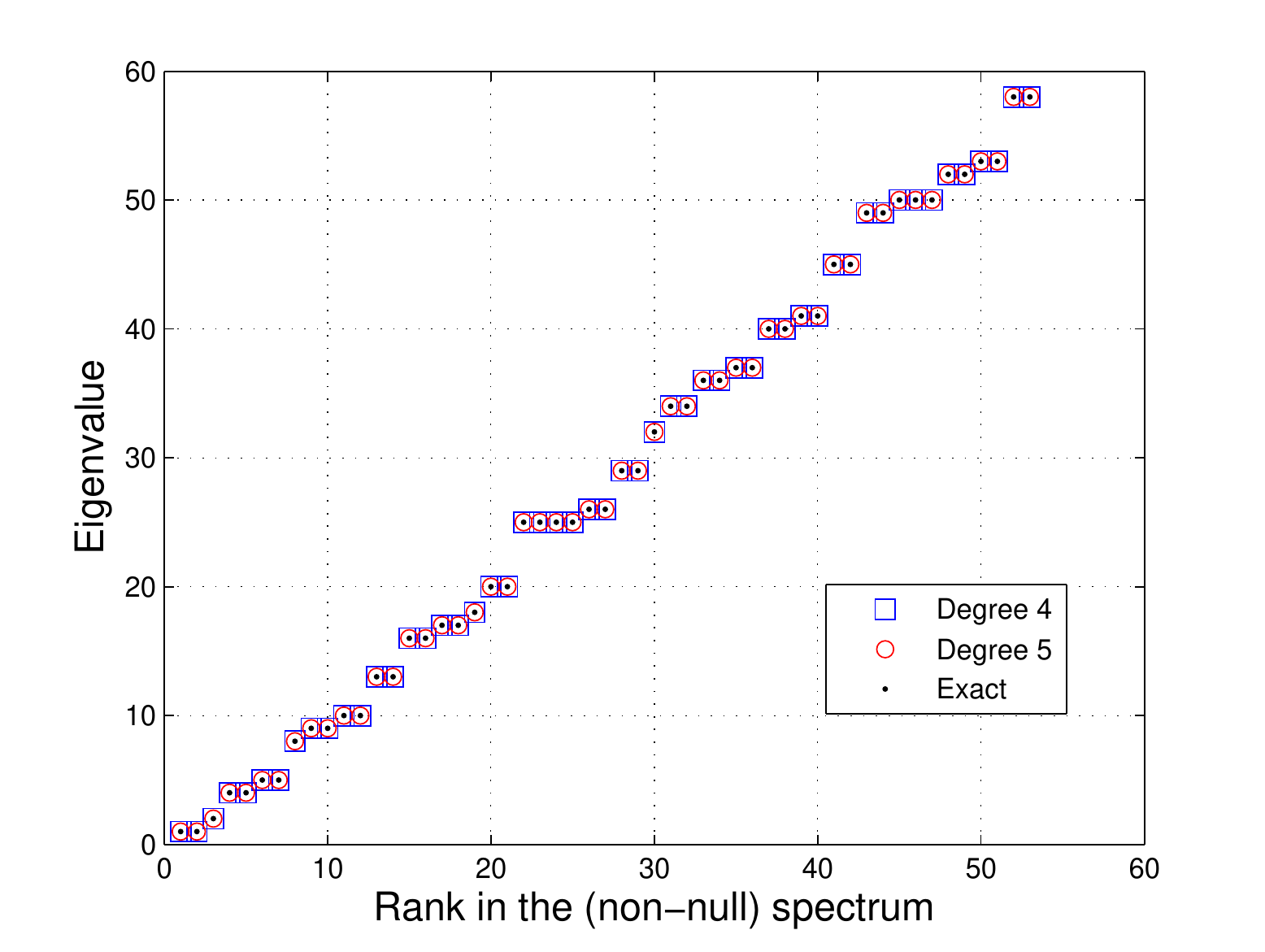}
\caption{First non-null eigenvalues computed in the square for degrees 4 and 5. } \label{fig:square_eigv}
\end{figure}

\begin{table}[!htbp]
\centering
{\footnotesize
\begin{tabular}{|c|c||c|c|c|c|c|}
\hline 
Mode & Exact & \multicolumn{5}{|c|}{Computed} \\
\hline
(1,0) & 1.00000  &1.00001 &  1.00000 &  1.00000  & 1.00000  & 1.00000\\
(0,1) & 1.00000  &1.00005 &  1.00000 &  1.00000  & 1.00000  & 1.00000\\
(1,1) & 2.00000  &2.00016 &  2.00000 &  2.00000  & 2.00000  & 2.00000\\
(2,0) & 4.00000  &4.00396 &  4.00004 &  4.00000  & 4.00000  & 4.00000\\
(0,2) & 4.00000  &4.03882 &  4.00134 &  4.00002  & 4.00000  & 4.00000\\
(2,1) & 5.00000  &5.00395 &  5.00003 &  5.00000  & 5.00000  & 5.00000\\
(1,2) & 5.00000  &5.10164 &  5.00208 &  5.00002  & 5.00000  & 5.00000\\
(2,2) & 8.00000  &8.05454 &  7.99989 &  8.00001  & 8.00000  & 8.00000\\
(3,0) & 9.00000  &9.06255 &  9.00135 &  9.00001  & 9.00000  & 9.00000\\
(0,3) & 9.00000  &9.12399 &  9.02102 &  9.00057  & 9.00001  & 9.00000\\
(3,1) & 10.0000  &10.0614 &  10.0014 &  10.0000  & 10.0000  & 10.0000\\
(1,3) & 10.0000  &10.2361 &  10.0324 &  10.0007  & 10.0000  & 10.0000\\
(3,2) & 13.0000  &12.8159 &  13.0028 &  13.0000  & 13.0000  & 13.0000\\
(2,3) & 13.0000  &13.2002 &  13.0091 &  13.0004  & 13.0000  & 13.0000\\
(4,0) & 16.0000  &17.9413 &  16.0181 &  16.0002  & 16.0000  & 16.0000\\
(0,4) & 16.0000  &19.8934 &  16.2962 &  16.0076  & 16.0001  & 16.0000\\
(4,1) & 17.0000  &19.9586 &  17.0181 &  17.0002  & 17.0000  & 17.0000\\
(1,4) & 17.0000  &20.8937 &  18.0245 &  17.0092  & 17.0001  & 17.0000\\
(3,3) & 18.0000  &21.4707 &  18.7373 &  18.0008  & 18.0000  & 18.0000\\
(4,2) & 20.0000  &24.0689 &  20.0191 &  20.0002  & 20.0000  & 20.0000\\
(2,4) & 20.0000  &26.1844 &  21.6138 &  20.0056  & 20.0001  & 20.0000\\
\hline \multicolumn{2}{|c||}{d.o.f.}  & 74 & 184 & 548 & 1852 & 6764 \\
\hline \multicolumn{2}{|c||}{number of zeros} & 21 & 65 & 225 & 833 & 3201 \\
\hline
\end{tabular}
}
\caption{First non-null eigenvalues computed in the square for $p=3$.}
\label{tab:eigv}
\end{table}

\begin{remark}
We have also solved the previous test by the mixed formulations in
\cite{Boffi07}, that make use of the full two-dimensional De Rham
complex \eqref{eq:Tsplines-complex}. The computed non-null
eigenvalues are the same as for the plain formulation
\eqref{eq:eigenproblem_2D}, while the zero eigenvalues are filtered   with the mixed formulation\B. These results, that we do not present here for the sake of brevity, confirm that the construction of the De Rham complex with T-splines is correct.
\end{remark}

\subsection{Maxwell eigenproblem in the thick L-shaped domain} \label{sect:thickL}
As a second test case, we solve the three-dimensional eigenvalue problem: \textit{Find $(\bu,\omega) \in \Hocurl \times \R$ such that}
\begin{equation}
  \label{eq:eigenvalues-thickL-3D}
  \int_\Omega \curl \bu \cdot\curl \overline \bv = \omega^2 \int_\Omega \bu \cdot \overline \bv \quad \forall \bv \in \Hocurl,
\end{equation}
in the thick L-shaped domain $\Omega = \Sigma \times (0,1)$, where $\Sigma = (-1,1)^2 \setminus [-1,0]^2$. From \cite{CD2000}, it is known that the reentrant edge introduces a singularity in the first eigenfunction, which only belongs to the space $H^{2/3-\varepsilon}(\Omega)$ for any $\varepsilon > 0$. 

It is well known that in order to recover the optimal convergence
rate we need to suitably refine the mesh toward the reentrant edge,
see e.g., \cite{MR1860445} or \cite{BuCoDa05}. Anisotropic
elements need to be used in this case (see \cite{Beirao_Cho_Sangalli} for some theoretical
background on the topic). We propose here a
dyadic refinement based on T-splines.

For the geometry representation, the thick L-shaped domain is parametrized as
the union of three cubic patches.  Following Section  \ref{sec:multi-patch-complex-continuity}, scalar fields in $\Xzeroh$ are only continuous at the interfaces between patches, and the fields in $\Xoneh$, which are used in the
discretization of \eqref{eq:eigenvalues-thickL-3D}, are  only
tangentially continuous at these interfaces (like for standard
edge finite elements), but at least $C^{p-2}$ within patches.

The refinement is obtained via T-splines by dyadic partitioning of elements which are close to the reentrant edges \cite[Ch.~4]{Apel}.   We perform the refinement first in an L-shaped two-dimensional section, and then propagate to the three-dimensional domain with a uniform mesh in the $z$-direction, as already explained in Section~\ref{sect:3D-T-splines}. The refinement is performed identically for every patch, in such a way that conformity can be ensured at the patch interfaces. 

To construct the two-dimensional mesh, at each refinement step, and for every patch, we refine a small square region near the reentrant edge, subdividing each element into 4. Then some T-junction extensions are added, depending on the degree, to make the mesh analysis suitable, as defined in Section~\ref{sect:AS}. For instance, in the example of Figure~\ref{fig:mesh_thickL} we start with an $8\times8$ mesh for each patch, which is drawn in black. At the first step we refine a region of $3 \times 3$ elements on each patch. Since the degree is $p=4$, two-bay extensions must be added to make the mesh analysis suitable. The refined elements at this step are given by the blue lines. At the second refinement step, which is marked in red, we first refine a square region of $2 \times 2$ elements, and again we add the two-bay extensions to make the mesh analysis suitable. Finally we remark that, since the dyadic partition and the definition of analysis suitable T-mesh depend on the degree, different meshes are used for different degrees. \B


\begin{figure}[h!]
\centering
\includegraphics[width=.4\textwidth]{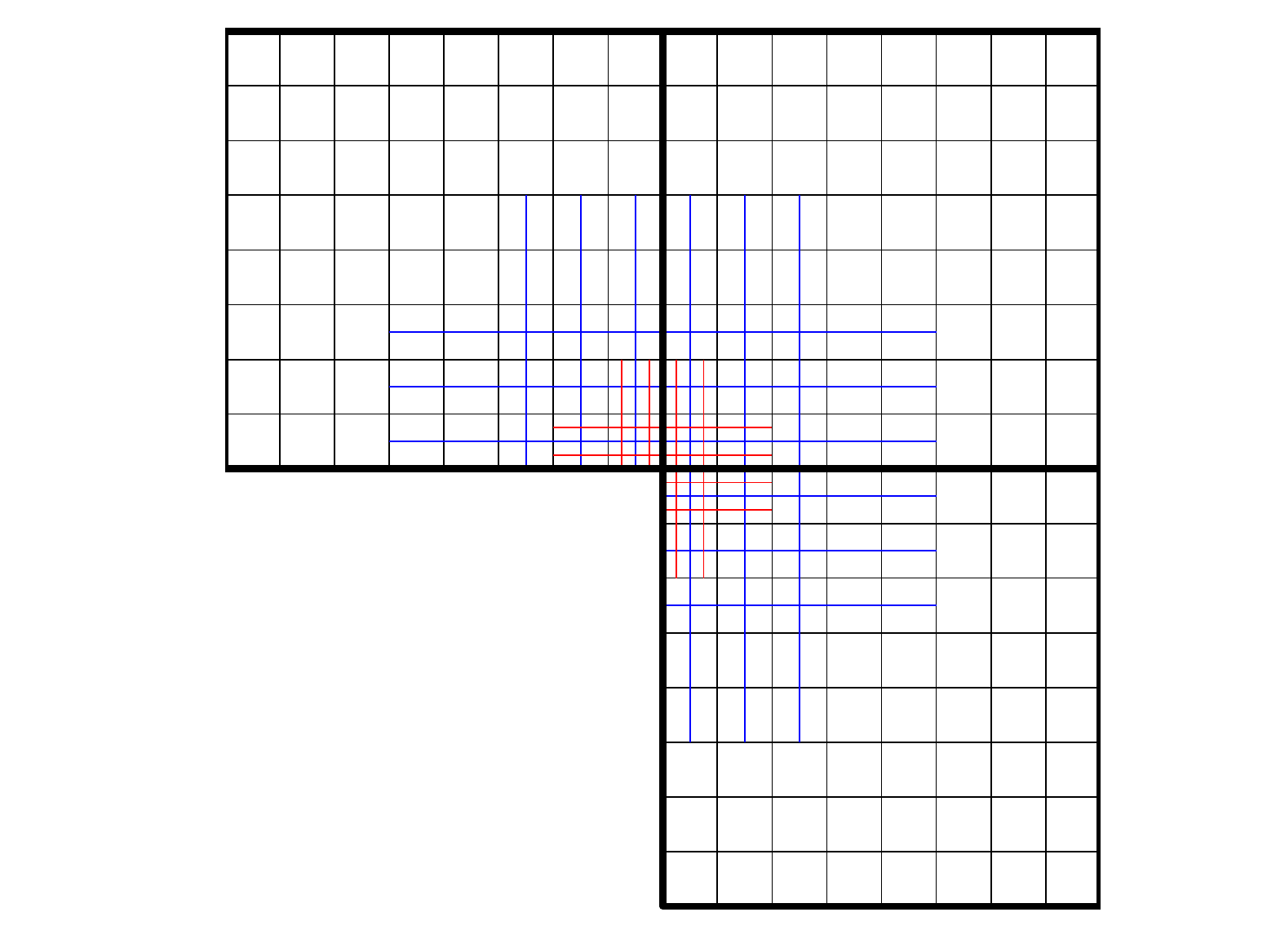}
\includegraphics[width=.45\textwidth]{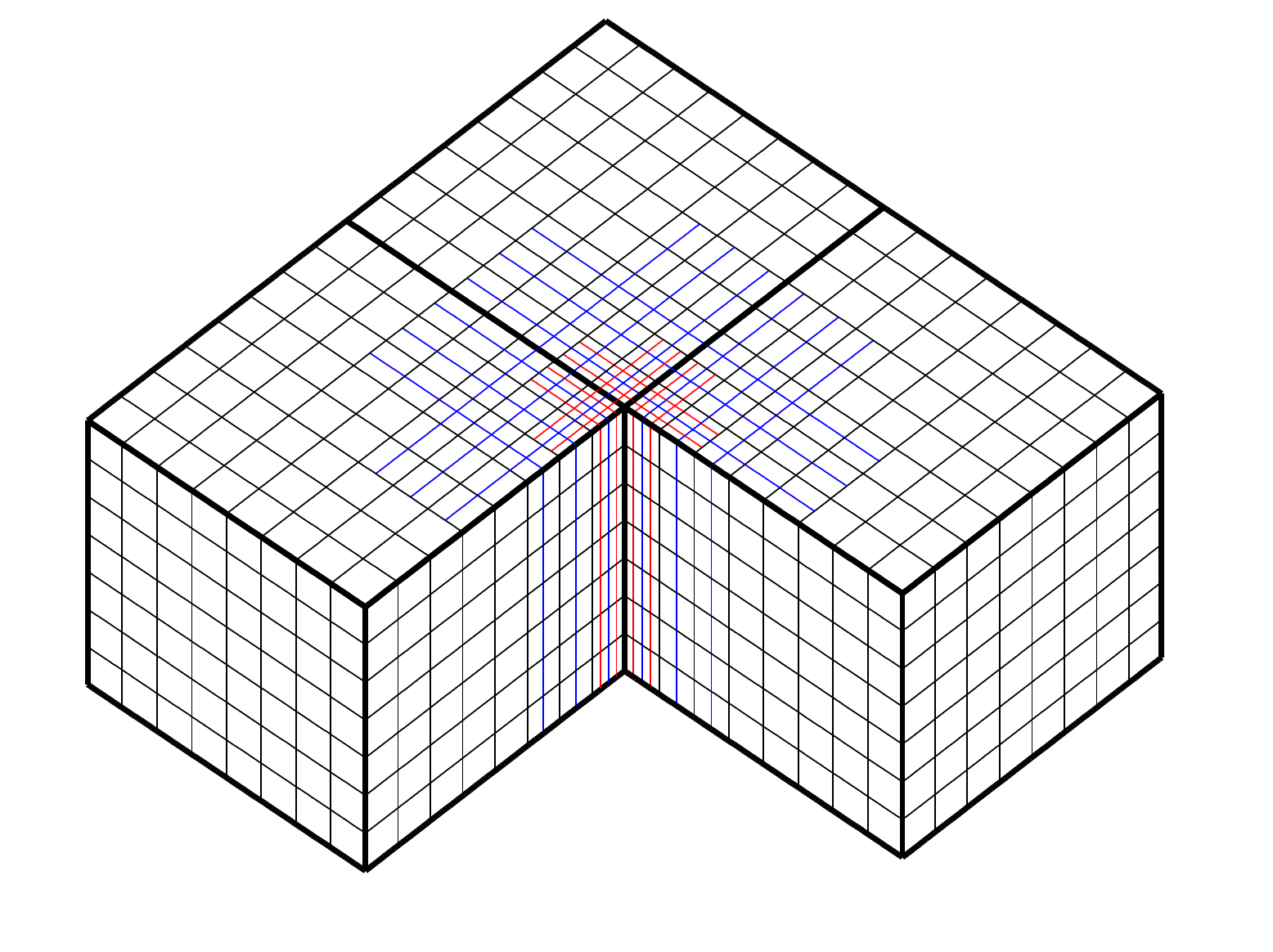}
\caption{Example of a two-dimensional mesh in the L-shaped domain $\Sigma$, and its extension to the three-dimensional domain $\Omega = \Sigma\times (0,1)$ , for $p=4$.} \label{fig:mesh_thickL}
\end{figure}

The problem has been solved for degrees~$4$ and~$5$. In Tables~\ref{tab:thickL_deg4} and~\ref{tab:thickL_deg5} we present the first non-null computed eigenvalues in the three cases, and its comparison with the exact solution. In Figure~\ref{fig:thickL_eigv} we display the convergence rate for the first eigenvalue, comparing the results obtained with T-splines and with a B-spline discretization of the same degree in the corresponding refined tensor-product dyadic mesh. As can be seen, with T-splines we obtain the same error with an important reduction in the number of the degrees of freedom.

\begin{figure}[h!]
\centering
\begin{subfigure}[Degree 4]{\includegraphics[width=.48\textwidth]{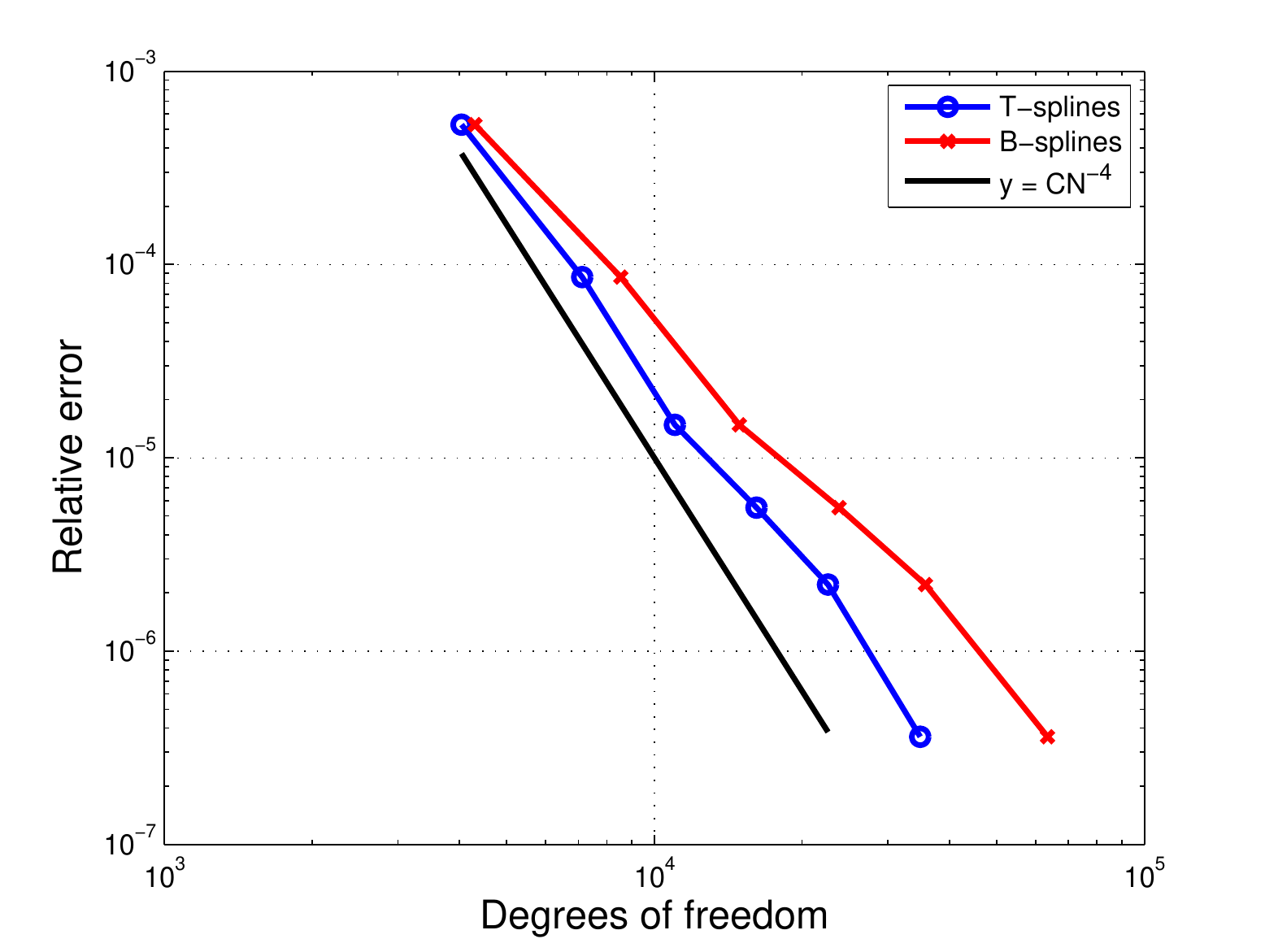}}
\end{subfigure}
\begin{subfigure}[Degree 5]{\includegraphics[width=.48\textwidth]{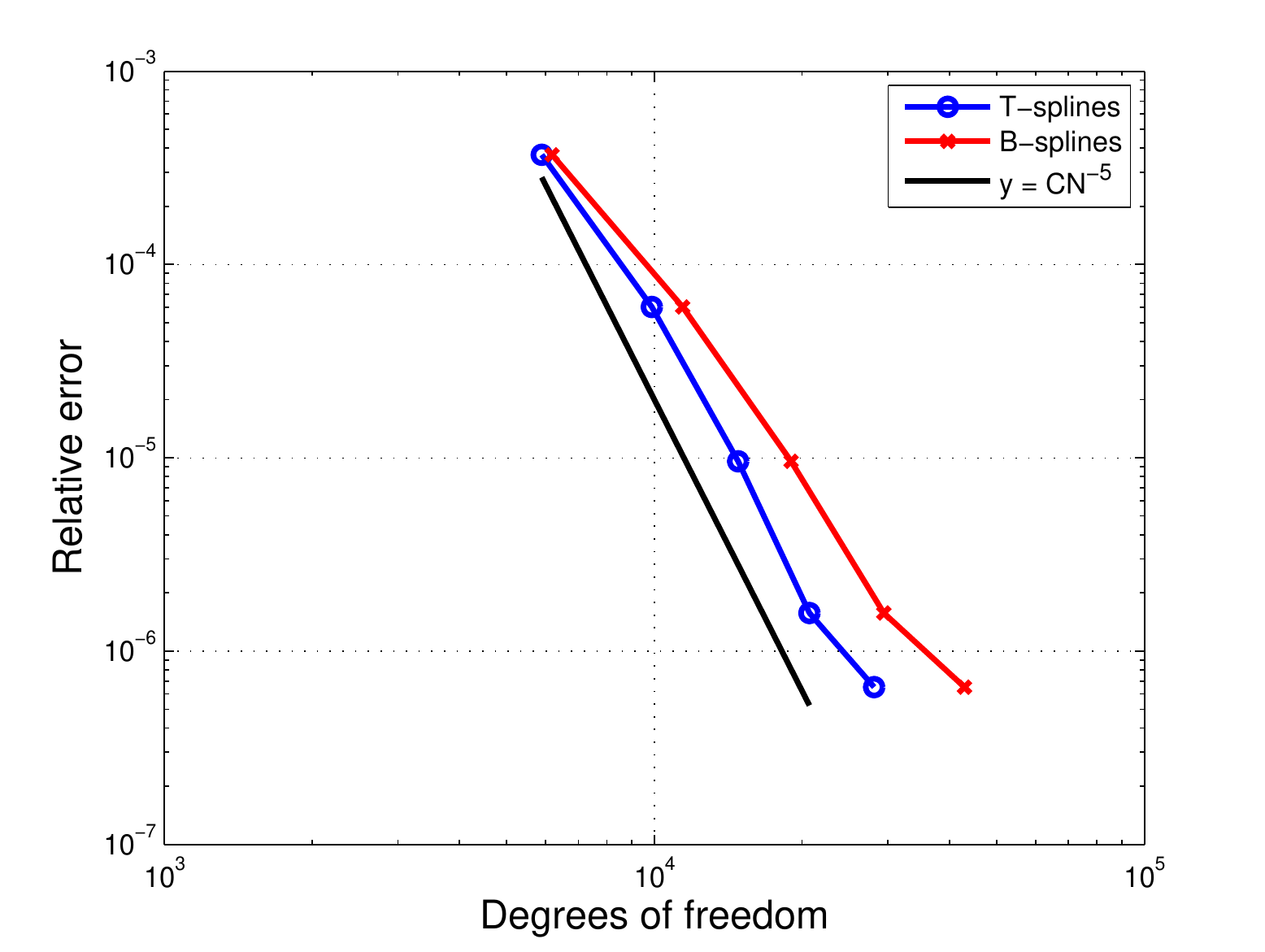}}
\end{subfigure}
\caption{Convergence of the first eigenvalue in the thick L-shaped domain.} \label{fig:thickL_eigv}
\end{figure}

\begin{table}[!htbp]
\centering
{\footnotesize
\begin{tabular}{|c||c|c|c|c|c|c|}
\hline 
Exact & \multicolumn{6}{|c|}{Computed} \\
\hline
9.63972384472	&9.64482260735	&9.64055367165	&9.63986647533	&9.63977706731	&9.63974511214	&9.63972731966 \\
\hline
11.3452262252	&11.3444193267	&11.3450973393	&11.3452056015	&11.3452178503	&11.3452228875	&11.3452256921 \\
\hline
13.4036357679	&13.4036330719	&13.4036359208	&13.4036359870	&13.4036357431	&13.4036357654	&13.4036357699 \\
\hline
15.1972519265	&15.1973643408	&15.1973310163	&15.1973301300	&15.1972556440	&15.1972524223	&15.1972523662 \\
\hline
19.5093282458	&19.5144198480	&19.5101576732	&19.5094708082	&19.5093814308	&19.5093494993	&19.5093317180 \\
\hline
19.7392088022	&19.7392474090	&19.7392464705	&19.7392464473	&19.7392098765	&19.7392090606	&19.7392090522 \\
\hline
19.7392088022	&19.7392474115	&19.7392464714	&19.7392464480	&19.7392098765	&19.7392090617	&19.7392090536 \\
\hline
19.7392088022	&19.7392854949	&19.7392835833	&19.7392835402	&19.7392109156	&19.7392092780	&19.7392092574 \\
\hline
21.2590837990	&21.2591164396	&21.2591199815	&21.2591200740	&21.2590848357	&21.2590840605	&21.2590840611 \\
\hline
d.o.f. &        4042    &    7126   &    11018   &    16162   &    22630    &   34894\\
\hline
\end{tabular}
}
\caption{First non-null eigenvalues computed in the thick L-shaped domain for $p=4$.}
\label{tab:thickL_deg4}
\end{table}

\begin{table}[!htbp]
\centering
{\footnotesize
\begin{tabular}{|c||c|c|c|c|c|}
\hline 
Exact & \multicolumn{5}{|c|}{Computed} \\
\hline
9.63972384472	&9.64328299807	&9.64030443177	&9.63981618307	&9.63973902051	&9.63973012738 \\
\hline 
11.3452262252	&11.3446611860	&11.3451346157	&11.3452117088	&11.3452238284	&11.3452252153 \\
\hline
13.4036357679	&13.4036342774	&13.4036357233	&13.4036357613	&13.4036357622	&13.4036357630 \\
\hline
15.1972519265	&15.1972704673	&15.1972555872	&15.1972551905	&15.1972551805	&15.19725206245 \\
\hline
19.5093282458	&19.5128817631	&19.5099083654	&19.5094205444	&19.5093433951	&19.5093345086 \\
\hline
19.7392088022	&19.7392095886	&19.7392095763	&19.7392095758	&19.7392095758	&19.7392088273 \\
\hline
19.7392088022	&19.7392095886	&19.7392095763	&19.7392095758	&19.7392095758	&19.7392088273 \\
\hline
19.7392088022	&19.7392103678	&19.7392103456	&19.7392103444	&19.7392103444	&19.7392088514 \\
\hline
21.2590837990	&21.2590824582	&21.2590845213	&21.2590845754	&21.2590845767	&21.2590838179 \\
\hline
d.o.f. &              5891     &   9883   &    14827   &    20723   &    28105\\
\hline
\end{tabular}
}
\caption{First non-null eigenvalues computed in the thick L-shaped domain for $p=5$.}
\label{tab:thickL_deg5}
\end{table}

\subsection{Maxwell source problem in three quarters of the cylinder}
For the third test case we consider the model source problem: \textit{Find $\bu \in \HcurloD$ such that}
\begin{equation}
\int_\Omega \curl \bu \cdot \curl \overline \bv + \int_\Omega \bu \cdot \overline \bv = \int_\Omega {\bf f} \cdot \overline \bv \quad \forall \bv \in \HcurloD,
\end{equation}
where $\HcurloD$ is the set of functions with null tangential trace on
$\Gamma_D \subset \partial \Omega$, i.e., \[\HcurloD := \{\bv \in \Hcurl : \bv \times \bn =
\zero \text{ on } \Gamma_D\}. \]

The geometry $\Omega$ is three quarters of a cylinder of radius and length equal to one (see Figure~\ref{fig:mesh_camembert}), that in cylindrical coordinates is given by $\Omega = \{(r, \theta, z): 0<r<1, \, 0 <\theta<\frac{3}{2}\pi, \, 0 <z<1\}$. We impose the null tangential component on $\Gamma_D = \{(r,\theta,z) : \theta \in \{0,\frac{3}{2}\pi\}\}$, and the source term ${\bf f}$ is taken such that the exact solution is $\bu = \grad (r^{2/3} \sin (2\theta/3) \sin(\pi z))$, i.e., it is singular in the first two directions, but it is regular in the $z$ direction, for which the local refinement of the previous test is well suited for this case.

As in the previous example, the domain is defined with three patches,
and the discrete fields $\Xoneh \subset \Hcurl $ are only tangentially continuous between them. The construction of the mesh in the parametric domain is identical to the one in the previous example: for each patch we first create a two-dimensional mesh locally refined towards the corner, and extend it to the three dimensional domain by tensor product. The mesh is then mapped to the physical domain, as can be seen in Figure~\ref{fig:mesh_camembert}.

\begin{figure}[h!]
\centering
\includegraphics[width=.65\textwidth]{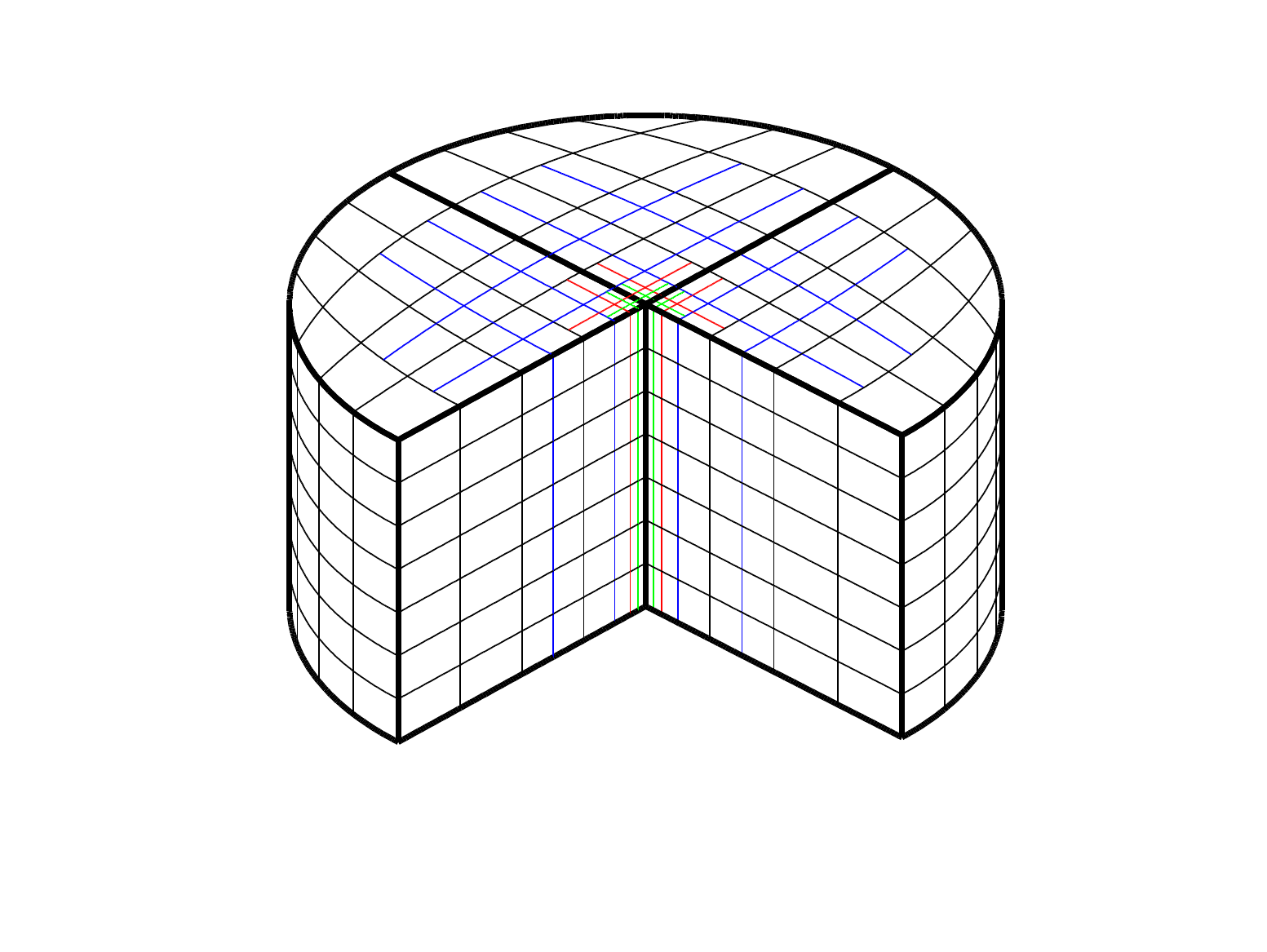}
\vspace*{-2cm}
\caption{Example of a mesh for three quarters of the cylinder.} \label{fig:mesh_camembert}
\end{figure}

The problem is solved with T-splines of degree~3, and also with B-splines of the same degree in the corresponding tensor product mesh. The errors in ${\bf H}(\bf curl)$-norm for the two methods are compared in Figure~\ref{fig:error_camembert}. As in the previous example, with T-splines we are able to obtain results similar to those of B-splines with a reduction of the number of degrees of freedom.

\begin{figure}[h!]
\centering
\includegraphics[width=.4\textwidth]{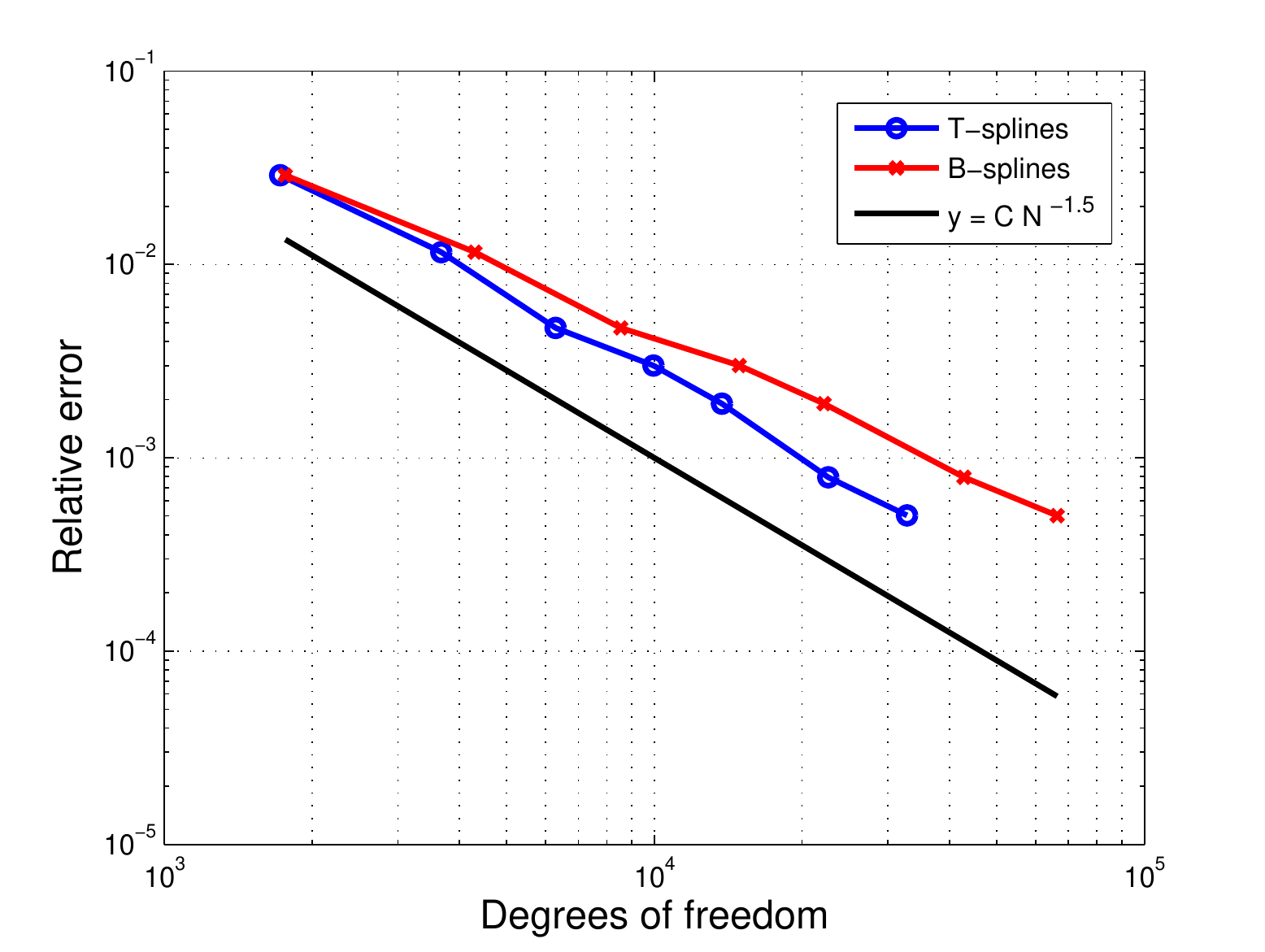}
\caption{Comparison of the error for T-splines and B-splines.} \label{fig:error_camembert}
\end{figure}

\subsection{Numerical simulation of a twisted waveguide}

As the last numerical test we use T-splines to simulate the
propagation of a singular mode in a waveguide with a twist. The
configuration, which is presented in Figure~\ref{fig:waveguide}, is
the same given in \cite[Ch.~8]{Jin}, changing the material
discontinuity by a geometric inhomogeneity (the twist). The problem is solved in a waveguide with a twist of 90~degrees, with a section of three quarters of the circle of 2~cm radius, and for which we assume that the walls are perfect electrical conductors. We also assume that the waveguide extends to infinity without other inhomogeneities, and it is truncated by the planes $\Gamma_1$ and $\Gamma_2$ to obtain the computational domain, which consists of three different regions: a middle region where the waveguide is twisted (see Figure~\ref{fig:waveguide}), and two straight regions near the ports, to keep the inhomogeneity far enough from them, in such a way that only the dominant mode TE$_{10}$ can propagate without attenuation. The total length of the computational domain is 24~cm: 4~cm for each straight region, and 16~cm for the region with the twist. The frequency $\omega$ is taken equal to 32~GHz, and it is between the cutoff frequencies for the first mode (21~GHz) and the second mode (33.84~GHz).
\B

Following \cite{Jin}, and working in the time harmonic regime at a given frequency $\omega$, the (complex-valued) electric field $\bE \in \HcurloD$ is solution of the problem
\begin{equation}\label{eq:vf_waveguide}
\begin{array}{rcl}
\ds \int_\Omega ( \curl \bE \cdot \curl \overline \bG - k^2 \bE \cdot \overline \bG) & + & \ds \int_{\Gamma_1 \cup \Gamma_2} \imag \beta_{10} (\bn \times \bE) \cdot (\bn \times \overline \bG) = \\
&& \ds 2\imag \beta_{10} \, \int_{\Gamma_1} \bE^{\rm inc} \cdot \overline \bG, \quad \forall \bG \in \HcurloD,
\end{array}
\end{equation}
where  
$k = \sqrt{\omega^2 \mu_0 \varepsilon_0}$ with $\mu_0$ and $\varepsilon_0$ the magnetic permeability and electric permittivity of free space. The incident electric field $\bE^{\rm inc}$ at the port $\Gamma_1$, and the wavenumber of the first mode $\beta_{10}$ are defined as
\begin{align*}
\bE^{\rm inc}(x,y,z) = \be_{10}(x,y) e^{-\imag \beta_{10} z}, &&
\beta_{10} = \sqrt{k^2 - k_{10}^2}.
\end{align*}
In the case of waveguides of rectangular or circular section, the
value of the constant $k_{10}$ and the mode $\be_{10}$ are known. In
the general case, they can be obtained by solving a 2D eigenvalue
problem on the port $\Gamma_1$, which consists on finding the minimum eigenvalue $k_{10} \in \R$, and its associated eigenvector $\be_{10} \in {\bf H}_0(\rot;\Gamma_1)$, such that
\begin{equation}\label{eq:2d_waveguide}
\int_{\Gamma_1} \rot \be_{10} \, \rot \overline \bv = k_{10}^2 \int_{\Gamma_1} \be_{10} \cdot \bv \quad \forall \bv \in {\bf H}_0(\rot;\Gamma_1).
\end{equation}

The electric field $\bE$ in equation \eqref{eq:vf_waveguide} is discretized with T-splines of degree~3, using the approach already explained in Section~\ref{sect:3D-T-splines}. The two-dimensional T-mesh for the section is built as in the previous examples, and in the $z$ direction we use one element for each straight region near the ports, and 4 elements along the twist, for a total of 7936 degrees of freedom. For the solution of the 2D problem \eqref{eq:2d_waveguide}, it is enough to restrict a field in $\Hcurl$ to its tangential components on the port $\Gamma_1$, which in practice is equivalent to solve with two-dimensional T-splines.

The magnitude of the real part of the computed solution $\bE$ is shown in Figure~\ref{fig:waveguide_mode}, which shows that the mode is correctly propagated. Finally, we also compute the reflection and transmission coefficients, given by the equations
\begin{equation*}
R = \frac{e^{-\imag \beta z_1} \int_{\Gamma_1} \bE \cdot \be_{10}}{\int_{\Gamma_1} \be_{10}\cdot \be_{10}} - e^{-2 \imag \beta z_1}, \qquad T = \frac{e^{\imag \beta z_2} \int_{\Gamma_1} \bE \cdot \be_{10}}{\int_{\Gamma_1} \be_{10}\cdot \be_{10}} - e^{-2 \imag \beta z_2},
\end{equation*}
and we obtain the values $|R| = 0.0025$ and $|T| = 0.9998$, which confirms that the twist does not affect the propagation of the mode, as expected.

\begin{figure}[h!]
\begin{subfigure}[Geometry of the waveguide]
{\includegraphics[width=.48\textwidth]{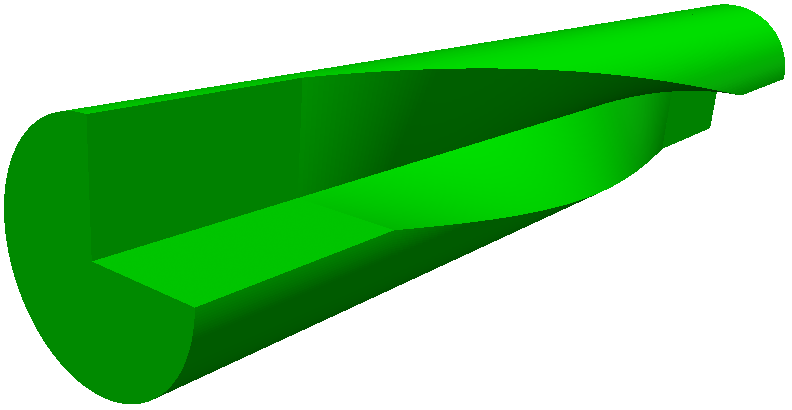} \label{fig:waveguide}}
\end{subfigure}
\begin{subfigure}[Real part of the computed electric field]
{\includegraphics[width=.45\textwidth]{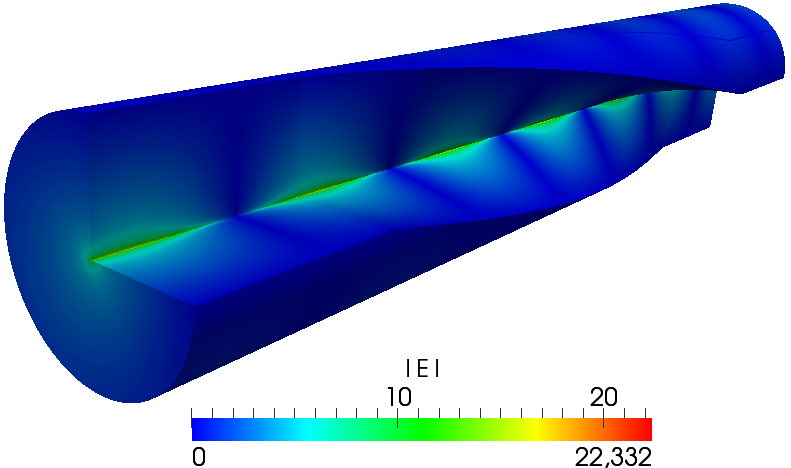} \label{fig:waveguide_mode}}
\end{subfigure}
\caption{Geometry of the waveguide and real part of the computed electric field.} 
\end{figure}

\bibliographystyle{elsarticle-num}

\begin{thebibliography}{10}
\expandafter\ifx\csname url\endcsname\relax
  \def\url#1{\texttt{#1}}\fi
\expandafter\ifx\csname urlprefix\endcsname\relax\def\urlprefix{URL }\fi
\expandafter\ifx\csname href\endcsname\relax
  \def\href#1#2{#2} \def\path#1{#1}\fi

\bibitem{HIP02a}
R.~Hiptmair, Finite elements in computational electromagnetism, Acta Numer. 11
  (2002) 237--339.

\bibitem{AFW06}
D.~N. Arnold, R.~S. Falk, R.~Winther, Finite element exterior calculus,
  homological techniques, and applications, Acta Numer. 15 (2006) 1--155.

\bibitem{Boff10}
D.~Boffi, Finite element approximation of eigenvalue problems, Acta Numer. 19
  (2010) 1--120.

\bibitem{BFGP}
D.~Boffi, P.~Fernandes, L.~Gastaldi, I.~Perugia, Computational models of
  electromagnetic resonators: analysis of edge element approximation, SIAM J.
  Numer. Anal. 36~(4) (1999) 1264--1290 (electronic).

\bibitem{BrSc08}
D.~Braess, J.~Sch{\"o}berl, Equilibrated residual error estimator for edge
  elements, Math. Comp. 77~(262) (2008) 651--672.

\bibitem{Piegl}
L.~Piegl, W.~Tiller, {The Nurbs Book}, Springer-Verlag, New York, 1997.

\bibitem{Sederberg_Zheng}
T.~Sederberg, J.~Zheng, A.~Bakenov, A.~Nasri, T-splines and {T-NURCCS}s, ACM
  Trans. Graph. 22~(3) (2003) 477--484.

\bibitem{IGA-book}
J.~A. Cottrell, T.~J.~R. Hughes, Y.~Bazilevs, Isogeometric {A}nalysis: toward
  integration of {CAD} and {FEA}, John Wiley \& Sons, 2009.

\bibitem{HCB05}
T.~J.~R. Hughes, J.~A. Cottrell, Y.~Bazilevs, Isogeometric analysis: {CAD},
  finite elements, {NURBS}, exact geometry and mesh refinement, Comput. Methods
  Appl. Mech. Engrg. 194~(39-41) (2005) 4135--4195.

\bibitem{MR2443159}
Y.~Bazilevs, V.~M. Calo, T.~J.~R. Hughes, Y.~Zhang, Isogeometric
  fluid-structure interaction: theory, algorithms, and computations, Comput.
  Mech. 43~(1) (2008) 3--37.

\bibitem{Cottrell_Reali_Bazilevs_Hughes}
J.~A. Cottrell, A.~Reali, Y.~Bazilevs, T.~J.~R. Hughes, Isogeometric analysis
  of structural vibrations, Comput. Methods Appl. Mech. Engrg. 195~(41-43)
  (2006) 5257--5296.

\bibitem{Echter2010374}
R.~Echter, M.~Bischoff, Numerical efficiency, locking and unlocking of {NURBS}
  finite elements, Computer Methods in Applied Mechanics and Engineering
  199~(5–8) (2010) 374 -- 382.

\bibitem{Lipton_Evans_Bazilevs}
S.~Lipton, J.~A. Evans, Y.~Bazilevs, T.~Elguedj, T.~J.~R. Hughes, Robustness of
  isogeometric structural discretizations under severe mesh distortion, Comput.
  Methods Appl. Mech. Engrg. 199~(5-8) (2010) 357 -- 373.

\bibitem{rank12}
E.~Rank, M.~Ruess, S.~Kollmannsberger, D.~Schillinger, A.~D\"{u}ster, Geometric
  modeling, isogeometric analysis and the finite cell method, Comput. Methods
  Appl. Mech. Engrg. (2012) doi: http://dx.doi.org/10.1016/j.cma.2012.05.022.

\bibitem{MR2835758}
L.~De~Lorenzis, {\.I}.~Temizer, P.~Wriggers, G.~Zavarise, A large deformation
  frictional contact formulation using {NURBS}-based isogeometric analysis,
  Internat. J. Numer. Methods Engrg. 87~(13) (2011) 1278--1300.

\bibitem{DQS10}
T.~Dokken, E.~Quak, V.~Skytt, Requirements from {I}sogeometric {A}nalysis for
  changes in product design ontologies, in: Proceedings of the FOCUS K3D
  Conference on Semantic 3D Media and Content (INRIA Sophia Antipolis -
  M\'{e}diterran\'{e}e, 2010), IMATI-CNR, Genova, Italy, 2010, pp. 11--15.

\bibitem{Martin_Cohen_Kirby}
T.~Martin, E.~Cohen, R.~Kirby, Volumetric parameterization and trivariate
  b-spline fitting using harmonic functions, Computer Aided Geometric Design
  26~(6) (2009) 648 -- 664, {S}olid and {P}hysical {M}odeling 2008, ACM
  Symposium on Solid and Physical Modeling and Applications.

\bibitem{CoMa10}
E.~Cohen, T.~Martin, R.~M. Kirby, T.~Lyche, R.~F. Riesenfeld, Analysis-aware
  modeling: understanding quality considerations in modeling for isogeometric
  analysis, Comput. Methods Appl. Mech. Engrg. 199~(5-8) (2010) 334--356.

\bibitem{Zhang2012}
Y.~Zhang, W.~Wang, T.~J.~R. Hughes, Solid {T}-spline construction from boundary
  representations for genus-zero geometry, Comput. Methods Appl. Mech. Engrg.
  (2012) (to appear).

\bibitem{BBCHS06}
Y.~Bazilevs, L.~Beir{\~a}o~da Veiga, J.~A. Cottrell, T.~J.~R. Hughes,
  G.~Sangalli, Isogeometric analysis: approximation, stability and error
  estimates for {$h$}-refined meshes, Math. Models Methods Appl. Sci. 16~(7)
  (2006) 1031--1090.

\bibitem{Beirao_Cho_Sangalli}
L.~Beir{\~a}o~da Veiga, A.~Buffa, D.~Cho, G.~Sangalli, Anisotropic {NURBS}
  approximation in {I}sogeometric {A}nalysis, Comput. Methods Appl. Mech.
  Engrg. 209-212 (2012) 1--11.

\bibitem{BRSV11}
A.~Buffa, J.~Rivas, G.~Sangalli, R.~V\'{a}zquez, Isogeometric discrete
  differential forms in three dimensions, SIAM J. Numer. Anal. 49~(2) (2011)
  818--844.

\bibitem{nwidth-iga}
J.~A. Evans, Y.~Bazilevs, I.~Babuska, T.~J.~R. Hughes, N-widths, sup-infs, and
  optimality ratios for the k-version of the isogeometric finite element
  method, Comput. Methods Appl. Mech. Engrg. 198~(21-26) (2009) 1726--1741.

\bibitem{Vuong_giannelli_juttler_simeon}
A.-V. Vuong, C.~Giannelli, B.~J\"uttler, B.~Simeon, A hierarchical approach to
  adaptive local refinement in isogeometric analysis, Comput. Methods Appl.
  Mech. Engrg. 200~(49-52) (2011) 3554--3567.

\bibitem{BePa12}
L.~Beir{\~a}o~da Veiga, D.~Cho, L.~Pavarino, S.~Scacchi, Overlapping {S}chwarz
  methods for {I}sogeometric {A}nalysis, SIAM J. Numer. Anal. 50~(3) (2012)
  1394–1416.

\bibitem{kleissieti}
S.~K. Kleiss, C.~Pechstein, B.~J{\"u}ttler, S.~Tomar, {IETI}-{I}sogeometric
  {T}earing and {I}nterconnecting, Comput. Methods Appl. Mech. Engrg. (2012)
  (accepted for publication).

\bibitem{Buffa_Sangalli_Vazquez}
A.~Buffa, G.~Sangalli, R.~V{\'a}zquez, Isogeometric analysis in
  electromagnetics: B-splines approximation, Comput. Methods Appl. Mech. Engrg.
  199~(17-20) (2010) 1143 -- 1152.

\bibitem{ratnani}
A.~Ratnani, E.~Sonnendr{\"u}cker, An arbitrary high-order spline finite element
  solver for the time domain {M}axwell equations, J. Sci. Comput. 51 (2012)
  87--106.

\bibitem{cohen2001geometric}
E.~Cohen, R.~Riesenfeld, G.~Elber, Geometric modeling with splines: an
  introduction, Vol.~1, AK Peters Wellesley, MA, 2001.

\bibitem{DeBoor}
C.~de~Boor, A practical guide to splines, revised Edition, Vol.~27 of Applied
  Mathematical Sciences, Springer-Verlag, New York, 2001.

\bibitem{Sederberg_Cardon}
T.~Sederberg, D.~Cardon, G.~Finnigan, N.~North, J.~Zheng, T.~Lyche, T-spline
  simplication and local refinement, ACM Trans. Graph. 23~(3) (2004) 276--283.

\bibitem{Bazilervs_Calo_Cottrell_Evans}
Y.~Bazilevs, V.~Calo, J.~A. Cottrell, J.~A. Evans, T.~J.~R. Hughes, S.~Lipton,
  M.~Scott, T.~Sederberg, Isogeometric analysis using {T}-splines, Comput.
  Methods Appl. Mech. Engrg. 199~(5-8) (2010) 229 -- 263.

\bibitem{Buffa_Cho_Sangalli}
A.~Buffa, D.~Cho, G.~Sangalli, Linear independence of the {T}-spline blending
  functions associated with some particular {T}-meshes, Comput. Methods Appl.
  Mech. Engrg. 199~(23--24) (2010) 1437--1445.

\bibitem{Scott_Li_Sederberg_Hughes}
M.~Scott, X.~Li, T.~Sederberg, T.~J.~R. Hughes, Local refinement of
  analysis-suitable {T}-splines, Comput. Methods Appl. Mech. Engrg. 213 - 216
  (2012) 206 -- 222.

\bibitem{Wang2011477}
W.~Wang, Y.~Zhang, M.~Scott, T.~J.~R. Hughes, Converting an unstructured
  quadrilateral mesh to a standard {T}-spline surface, Comput. Mech. 48~(4)
  (2011) 477--498.

\bibitem{Beirao_Buffa_Cho_Sangalli}
L.~Beir{\~a}o~da Veiga, A.~Buffa, D.~Cho, G.~Sangalli, Iso{G}eometric analysis
  using {T}-splines on two-patch geometries, Comput. Methods Appl. Mech. Engrg.
  200~(21-22) (2011) 1787--1803.

\bibitem{LZSHS12}
X.~Li, J.~Zheng, T.~Sederberg, T.~Hughes, M.~Scott, On linear independence of
  {T}-spline blending functions, Comput. Aided Geom. Design 29~(1) (2012) 63 --
  76.

\bibitem{BBCS12}
L.~Beir{\~a}o~da Veiga, A.~Buffa, D.~Cho, G.~Sangalli, Analysis-{S}uitable
  {T}-splines are {D}ual-{C}ompatible, Comput. Methods Appl. Mech. Engrg.
  (2012) (to appear).

\bibitem{Scott}
M.~Scott, T-splines as a design-through-analysis technology, Ph.D. thesis, The
  University of Texas at Austin (2011).

\bibitem{AFW-2}
D.~N. Arnold, R.~S. Falk, R.~Winther, Finite element exterior calculus: from
  {H}odge theory to numerical stability, Bull. Amer. Math. Soc. (N.S.) 47~(2)
  (2010) 281--354.

\bibitem{Buffa_deFalco_Sangalli}
A.~Buffa, C.~de~Falco, G.~Sangalli, Isogeometric {A}nalysis: stable elements
  for the 2{D} {S}tokes equation, Internat. J. Numer. Methods Fluids 65~(11-12)
  (2011) 1407--1422.

\bibitem{EvHu12-4}
J.~A. Evans, T.~J.~R. Hughes, {D}iscrete spectrum analyses for various mixed
  discretizations of the {S}tokes eigenproblem, Computational Mechanics (2012)
  (accepted for publication).

\bibitem{EvHu12}
J.~A. Evans, T.~J.~R. Hughes, Isogeometric divergence-conforming {B}-splines
  for the {D}arcy-{S}tokes-{B}rinkman equations., Math. Models Methods Appl.
  Sci. (2012) (accepted for publication)\href
  {http://dx.doi.org/10.1142/S0218202512500583}
  {\path{doi:10.1142/S0218202512500583}}.

\bibitem{EvHu12-2}
J.~A. Evans, T.~J.~R. Hughes, Isogeometric divergence-conforming {B}-splines
  for the {S}teady {N}avier-{S}tokes {E}quations, Tech. Rep. 12-15, ICES, UT
  Austin (2012).

\bibitem{EvHu12-3}
J.~A. Evans, T.~J.~R. Hughes, Isogeometric divergence-conforming {B}-splines
  for the {U}nsteady {N}avier-{S}tokes {E}quations, Tech. Rep. 12-16, ICES, UT
  Austin (2012).

\bibitem{Monk}
P.~Monk, Finite {E}lement {M}ethods for {M}axwell's {E}quations, Oxford
  University Press, Oxford, 2003.

\bibitem{BBSV12}
L.~Beir\~ao~da Veiga, A.~Buffa, G.~Sangalli, R.~V\'azquez, Analysis-suitable
  {T}-splines of arbitrary degree: definition and properties, Tech. rep.,
  IMATI-CNR (2012).

\bibitem{NED80}
J.-C. N{\'e}d{\'e}lec, Mixed finite elements in {$R^3$}, Numer. Math. 35 (1980)
  315--341.

\bibitem{BuCh07}
A.~Buffa, S.~H. Christiansen, A dual finite element complex on the barycentric
  refinement, Math. Comp. 76~(260) (2007) 1743--1769 (electronic).

\bibitem{GeWiCl04}
H.~De~Gersem, M.~Wilke, M.~Clemens, T.~Weiland, Efficient modelling techniques
  for complicated boundary conditions applied to structured grids, COMPEL
  23~(4) (2004) 904--912.

\bibitem{ClThWe99}
M.~Clemens, P.~Thoma, T.~Weiland, U.~van Rienen, Computational
  electromagnetic-field calculation with the finite-integration method, Surveys
  Math. Indust. 8~(3-4) (1999) 213--232.

\bibitem{MR1872728}
R.~Hiptmair, Discrete {H}odge operators, Numer. Math. 90~(2) (2001) 265--289.

\bibitem{MR2143847}
A.~Bossavit, Discretization of electromagnetic problems: the ``generalized
  finite differences'' approach, in: Handbook of numerical analysis. {V}ol.
  {XIII}, Handb. Numer. Anal., XIII, North-Holland, Amsterdam, 2005, pp.
  105--197.

\bibitem{Li_Scott}
X.~Li, M.~Scott, On the nesting behavior of {T}-splines, Tech. Rep. 11-13, ICES
  (2011).

\bibitem{Scott2011126}
M.~Scott, M.~Borden, C.~Verhoosel, T.~Sederberg, T.~J.~R. Hughes, Isogeometric
  finite element data structures based on {B\'e}zier extraction of {T}-splines,
  Internat. J. Numer. Methods Engrg. 88~(2) (2011) 126--156.

\bibitem{bressan-in-preparation}
A.~Bressan, T-splines characterization, in preparation.

\bibitem{Mourr10}
B.~Mourrain, On the dimension of spline spaces on planar {T}-meshes, Tech.
  rep., INRIA, http://hal.inria.fr/inria-00533187/en (2012).

\bibitem{LiCh11}
X.~Li, F.~Chen, On the instability in the dimension of splines spaces over
  {T}-meshes, Comput. Aided Geom. Design 28~(7) (2011) 420--426.

\bibitem{VB10}
R.~V\'{a}zquez, A.~Buffa, Isogeometric analysis for electromagnetic problems,
  Magnetics, IEEE Transactions on 46~(8) (2010) 3305 --3308.

\bibitem{dFRV11}
C.~de~Falco, A.~Reali, R.~V\'azquez, Geo{PDE}s: a research tool for
  {I}sogeometric {A}nalysis of {PDE}s, Adv. Eng. Softw. 42~(12) (2011)
  1020--1034.

\bibitem{Boffi07}
D.~Boffi, Approximation of eigenvalues in mixed form, discrete compactness
  property, and application to {$hp$} mixed finite elements, Comput. Methods
  Appl. Mech. Engrg. 196~(37-40) (2007) 3672--3681.

\bibitem{CD2000}
M.~Costabel, M.~Dauge, Singularities of electromagnetic fields in polyhedral
  domains, Arch. Ration. Mech. Anal. 151~(3) (2000) 221--276.

\bibitem{MR1860445}
S.~Nicaise, Edge elements on anisotropic meshes and approximation of the
  {M}axwell equations, SIAM J. Numer. Anal. 39~(3) (2001) 784--816
  (electronic).

\bibitem{BuCoDa05}
A.~Buffa, M.~Costabel, M.~Dauge, Algebraic convergence for anisotropic edge
  elements in polyhedral domains, Numer. Math. 101~(1) (2005) 29--65.

\bibitem{Apel}
T.~Apel, Anisotropic finite elements: local estimates and applications,
  Advances in Numerical Mathematics, B. G. Teubner, Stuttgart, 1999.

\bibitem{Jin}
J.~Jin, The finite element method in electromagnetics, 2nd Edition,
  Wiley-Interscience [John Wiley \& Sons], New York, 2002.

\end{thebibliography}
\def\cprime{$'$} \def\cprime{$'$}

\end{document}